\DeclareMathSymbol{\leqslant}{\mathalpha}{AMSa}{"36} 
\DeclareMathSymbol{\geqslant}{\mathalpha}{AMSa}{"3E} 
\renewcommand{\leq}{\;\leqslant\;}                   
\renewcommand{\geq}{\;\geqslant\;}                   
\newtheorem{Th}{Theorem}
\newtheorem{Le}[Th]{Lemma}
\newtheorem{Pro}[Th]{Proposition}
\newtheorem{Rq}[Th]{Remark}
\newcommand{\cA}{\ensuremath{\mathcal A}}
\newcommand{\cB}{\ensuremath{\mathcal B}}
\newcommand{\cD}{\ensuremath{\mathcal D}}
\newcommand{\cP}{\ensuremath{\mathcal P}}
\newcommand{\cR}{\ensuremath{\mathcal R}}
\newcommand{\cX}{\ensuremath{\mathcal X}}
\newcommand{\bbC}{{\ensuremath{\mathbb C}} }
\newcommand{\bbD}{{\ensuremath{\mathbb D}} }
\newcommand{\bbE}{{\ensuremath{\mathbb E}} }
\newcommand{\bbN}{{\ensuremath{\mathbb N}} }
\newcommand{\bbP}{{\ensuremath{\mathbb P}} }
\newcommand{\bbR}{{\ensuremath{\mathbb R}} }
\newcommand{\Om}{\Omega}
\newcommand{\E}{\bbE}
\newcommand{\N}{\bbN}
\newcommand{\Ne}{\bbN^{\ast}}
\newcommand{\R}{\bbR}
\newcommand{\bbd}{\mathbf{d}}
\newcommand{\tN}{\tilde{N}}
\newcommand{\hOm}{\hat{\Om}}
\newcommand{\hcA}{\hat{\cA}}
\newcommand{\hbbP}{\hat{\bbP}}
\newcommand{\hE}{\hat{\E}}
\newcommand{\crea}{\varepsilon^+}
\newcommand{\anni}{\varepsilon^-}
\title{The Lent Particle Method,\\ Application to Multiple Poisson Integrals}
\date{}
\author{Nicolas BOULEAU\\
Ecole des Ponts ParisTech\\
6 Avenue Blaise Pascal,
Marne-la-Vall\'ee 77455, France\\
{\tt bouleau@enpc.fr}}
\begin{document}

\maketitle


\begin{abstract}
We give a extensive account of a recent new way of applying  the Dirichlet form theory  to random Poisson measures. The main application is to obtain existence of density for the laws of random functionals of L\'evy processes or solutions of stochastic differential equations with jumps.  As in the Wiener case the Dirichlet form approach weakens significantly the regularity assumptions. The main novelty is an explicit formula for the gradient or for the ``carr\'e du champ" on the Poisson space called the lent particle formula because based on adding a new particle to the system, computing the derivative of the functional with respect to this new argument and taking back this particle before applying the Poisson measure. 

The article is expository in its first part and based on Bouleau-Denis \cite{bouleau-denis} with several new examples, applications to multiple Poisson integrals are gathered in the last part which concerns  the relation with the Fock space and some aspects of the second quantization.
\end{abstract}

{Keywords : Dirichlet form, Poisson random measure, Malliavin calculus, stochastic differential equation, Poisson functional,  energy
image density, L\'evy processes, L\'evy measure, gradient, carr\'e du champ.}


\section{Introduction and framework.}
This lecture is an introduction to Dirichlet forms methods for studying regularity of random variables yielded by  L\'evy processes, solutions of stochastic differential equations driven by Poisson measures and multiple Poisson integrals. The main part of this study has been done in collaboration with Laurent Denis.

A Dirichlet forms is a generalisation of the classical quadratic operator
$\int_\Omega |\nabla f(x)|^2dx$
early introduced in potential theory. 
The concept has been developped especially by Beurling and Deny in the 1950's as an application of Hilbert space methods in potential theory, and by Fukushima in the 1970's in connection with symmetric Markov processes theory. It received recently a strong development in  infinite dimensional spaces where it appears as an alternative approach to Malliavin calculus.

The importance of the notion comes from the fact that if $P_t$ is a symmetric strongly continuous contraction semigroup on a space $L^2(\mu)$ (for $\mu$ $\sigma$-finite positive measure) with generator $A$, a necessary and sufficient condition that $P_t$ be Markov is that ``contractions operate" on the quadratic form
$\mathcal{E}[f]=-<Af,f>_{L^2(\mu)}$
i.e. $\mathcal{E}[\varphi(f)]\leq \mathcal{E}[f]$  for  $\varphi$  contraction from $\mathbb{R}$ to $\mathbb{R}$ (cf \cite{bouleau-hirsch2} Chap.I prop. 3.2.1). Such a quadratic form is called a Dirichlet form.

The case of Malliavin calculus is that of Wiener space taking for $P_t$ the Ornstein-Uhlenbeck semi-group. The corresponding Dirichlet form $\mathcal{E}$ possesses a ``carr\'e du champ" operator, i.e. may be written $\mathcal{E}[f]=\frac{1}{2}\int\Gamma[f]d\mu$  where $\Gamma$ is a quadratic operator from the domain of $\mathcal{E}$ to $L^1(\mu)$. This fact makes it possible the definition of a ``gradient" satisfying the chain rule and allowing a differential calculus through stochastic expressions and stochastic differential equations (SDE) and providing integration by parts formulae which yield existence of density results (cf \cite{malliavin}).

Using Dirichlet forms in this framework of Wiener space  improves several results : contraction arguments show that the Picard  iteration method for solving SDE's holds not only in $L^2$ but still for the stronger Dirichlet norm. This gives existence of density for solutions of SDE's under only Lipschitz assumptions on the coefficients (cf \cite{bouleau-hirsch1} and \cite{bouleau-hirsch2}). More generally, Dirichlet forms are easy to construct in the infinite dimensional frameworks encountered in probability theory and this yields a theory of errors propagation through the stochastic calculus (cf Bouleau \cite{bouleau3}),  also for numerical analysis of PDE and SPDE (cf Scotti \cite{scotti}).

As the Malliavin calculus has been extended to the case of  Poisson measures and SDE's with jumps, 
 either dealing with local operators acting on the size of the jumps (Bichteler-Gravereaux-Jacod \cite{bichteler-gravereaux-jacod}  Ma-R\"ockner\cite{ma-rockner2} L\'eandre \cite{leandre1} \cite{leandre2} etc.) or based on the Fock space representation of the Poisson space and finite difference operators (Nualart-Vives \cite{nualart-vives} Picard \cite{picard} Ishikawa-Kunita \cite{ishikawa-kunita} etc.), it is quite natural to attempt extending the Dirichlet forms arguments to such cases. This has been done first by Coquio \cite{coquio} when the state space is Euclidean then by Denis \cite{denis} by a time perturbation, see also related works of Privault  \cite{privault}, Albeverio-Kondratiev-R\"ockner \cite{akr}, Ma-R\"ockner \cite{ma-rockner2}.

We shall give a general presentation of Dirichlet forms methods for the Poisson measures in the spirit  of the first approach (Bichteler-Gravereaux-Jacod \cite{bichteler-gravereaux-jacod})  which gives rise to a very similar situation like in Malliavin Calculus : a symmetric semi-group on the Poisson space and a local gradient satisfying the chain rule. With respect to preceding works in this direction we introduce a major simplification due to a new tool {\em the lent particle formula} \cite{bouleau-denis} which gives  the gradient on the Poisson space by a closed formula. Thanks to this representation we obtained with Laurent Denis several results of existence of density \cite{bouleau-denis} \cite{bouleau-denis2} and the method extends to $\mathcal{C}^\infty$ results (forthcoming paper). In this lecture I present the method and the main applications obtained up to now and I expose new results about the regularity of multiple Poisson integrals in connection with the Fock space representation that the Poisson space provides.
It is organised as follows : \\

\noindent{\bf The functional analytic reasoning.}

Dirichlet forms and non-Gaussian Malliavin calculus
---
Poisson random measures
---
Dirichlet form on the Poisson space : the lent particle formula.

\noindent{\bf Practice of the method.}

Other examples
---
Applications to SDE's
---
A useful theorem of Paul L\'evy.

\noindent{\bf Regularity results for multiple Poisson integrals.}

Random Poisson measure and Fock space
---
Decomposition of $\mathbb{D}$ in chaos
---
Density for  $(I_1(g),\ldots,I_n(g^{\otimes n}))$
---
Density for $(I_{n_1}(f_1^{\otimes n_1}),\ldots,I_{n_p}(f_p^{\otimes n_p}))$
---
Other functionals of Poisson integrals
---
Density of $I_n(f)$.

\section{The functional analytic reasoning.}
Let us first introduce the fundamental notions of the theory of local Dirichlet forms.
\subsection{Dirichlet forms and non-Gaussian Malliavin calculus.}
 Let $(X,\cX
,\nu,\bbd,\gamma)$ be a local symmetric Dirichlet structure which
admits a ``carr\'e du champ" operator. This means that $(X,\cX ,\nu )$ is a
measured space, $\nu$ is a $\sigma$-finite positive measure
and the bilinear form
$ e [f,g]=\frac12\int\gamma [f,g]\, d\nu$
is a local  Dirichlet form with domain $\bbd\subset L^2 (\nu )$
and carr\'e du champ $\gamma$ (cf Fukushima-Oshima-Takeda \cite{fukushima-oshima-takeda} in the finite dimensional case and Bouleau-Hirsch \cite{bouleau-hirsch2} in a general setting). The form $e$ is closed in $L^2(\nu)$ and the bilinear operator $\gamma$ satisfies the functional calculus of class $\mathcal{C}^1\cap Lip$:
$$\forall f, g\in\bbd^n,\;\forall F,G \mbox{ of class }\mathcal{C}^1\cap Lip \mbox{ on } \mathbb{R}^n
\quad\gamma[F(f),G(g)]=\sum_{ij}\partial_iF(f)\partial_jG(g)\gamma[f_i,g_j].$$ We write always $\gamma[f]$ for $\gamma[f,f]$ and $e[f]$ for $e[f,f]$.

The space $\bbd$ equipped with the norm $(\|.\|_{L^2(\nu)}^2+e[.,.])^\frac{1}{2}$ is a Hilbert space that we will suppose separable. It is then possible to generate the quadratic differential computations with $\gamma$ by an ordinary differential calculus thanks to the fact that a gradient exists 
 (see Bouleau-Hirsch \cite{bouleau-hirsch2} ex.5.9 p. 242): 
there exist a separable Hilbert space $H$ and a continuous linear
map $D$ from $\bbd$ into $L^2 (X,\nu;H)$ such that

$\bullet$ $\forall u\in \bbd$, $\| D[u ]\|^2_H =\gamma[u]$. 

$\bullet$  If
$F:\R\rightarrow \R$ is Lipschitz  then
$\forall u\in\bbd,\ D[F\circ u]=(F'\circ u )Du,$ where $F'$ is the Lebesgue almost everywhere defined derivative of $F$.

$\bullet$  If $F$ is $\mathcal{C}^1$ (continuously differentiable) and Lipschitz from $\R^d$ into
$\R$ (with $d\in\N$) then
\[ \forall u=(u_1 ,\cdots ,u_d) \in \bbd^d ,\ D[F\circ
u]=\sum_{i=1}^d (\partial_i F \circ u ) D[u_i ].\]

 \noindent In \cite{bouleau-hirsch2} Chap VII we used for $H$ a copy of the space $L^2(\nu)$, but a wide choice is possible depending on convenience.
 
 This differential calculus gives rise to integration by parts formulae as in classical Malliavin calculus. 
 For all $u\in \bbd$ and $v\in\mathcal{D}(a)$ domain of the generator $a$ associated with the Dirichlet structure,
we have
\begin{equation}
\frac{1}{2} \int  \gamma [u,v]d\nu =-\int
ua[v]d\nu.
\end{equation}
The space $\bbd\cap L^\infty$ may be shown to be an algebra,
hence if $u_1 ,u_2\in \bbd\cap L^\infty$ 
\begin{equation}
\frac{1}{2}\int u_2\gamma [u_1,v]d\nu =-\int
u_1 u_2 a[v]d\nu -\frac{1}{2} \int u_1\gamma [u_2,v]
d\nu
\end{equation}
Introducing now the adjoint operator $\delta$ of the gradient $D$, the equality  with $u\in
\bbd$, $U\in \hbox{dom}\; \delta$
\begin{equation}
\int u\delta Ud\nu =\int \langle D[u] ,U\rangle_{H}
d\nu
\end{equation}
provides for $\varphi$ Lipschitz
\begin{equation}
\int \varphi '(u) \langle D[u] ,U\rangle_{H}d\nu
=\int\varphi (u) \delta Ud\nu .
\end{equation}
See \cite{bouleau3} Chap V to VIII and \cite{bouleauMC} for applications of such formulae.\\

But the Dirichlet structures do possess pecular features allowing to show existence of density without using integration by parts arguments. This is based on the following important {\em energy image density property} or (EID):

For each positive integer $d$, we denote by $\cB (\R^d )$ the
Borel $\sigma$-field on $\R^d$ and by $\lambda^d$ the Lebesgue measure
on $(\R^d ,\cB (\R^d ))$.
For $f$ measurable $f_*\nu$ denotes the image of the measure $\nu$ by $f$.

\noindent{\it The Dirichlet structure $(X,\cX
,\nu,\bbd,\gamma)$
is said to satisfy {\rm (EID)} if for any $d$ and for any
$\mathbb{R}^d$-valued function $U$ whose components are in the
domain of the form
$$ U_*[({\det}\gamma[U,U^t])\cdot
\nu ]\ll \lambda^d $$ where {\rm det} denotes the determinant.}

This property is true for any local Dirichlet structure with carr\'e du champ when $d=1$ (cf Bouleau \cite{bouleau1} Thm 5 and Corol 6).  It has been conjectured in 1986 (Bouleau-Hirsch
 \cite{bouleau-hirsch1} p251) that (EID) were true for any  local Dirichlet structure with carr\'e
  du champ. This has been shown for the Wiener space equipped with the Ornstein-Uhlenbeck form and
  for some other structures by Bouleau-Hirsch (cf \cite{bouleau-hirsch2} Chap. II \S 5 and Chap. V example 2.2.4)  but this conjecture
  being at present neither refuted nor proved in full generality, it has to be established in every
  particular setting.  For the Poisson space it has been proved by A. Coquio \cite{coquio} when the intensity measure is the Lebesgue measure on an open set and we obtained with Laurent Denis a rather general condition (\cite{bouleau-denis} Section 2 Thm 2 and Section 4) based on a criterion of Albeverio and R\"ockner \cite{albeverio} and an argument of Song \cite{song}. The new regularity results that are presented here are based on the (EID) property.\\

 Let us first explain the framework of Poisson measures and the notation of the configuration space.
\subsection{Poisson random measures.}
We are
given $(X,\cX ,\nu )$ a measured space. We call it {\it the bottom
space}. 
We assume that $\nu$ is $\sigma$-finite, that for all $x\in X$,
$\{ x\}$ belongs to $\cX$ and that $\nu$ is continuous or diffuse
($\nu(\{x\})=0\;\forall x$).

We consider a random Poisson measure $N$ on $(X,\cX )$ with intensity measure $\nu$. Such a random measure is characterized by the fact that for $A\in\cX$ the random variable $N(A)$ follows a Poisson law with parameter $\nu(A)$ and $N(A_1), \ldots,N(A_n)$ are independent for disjoint $A_i$. Such an object may be constructed on the space of countable sums of Dirac masses on $(X,\cX )$ (the configuration space), by considering first the case where $\nu$ is bounded where the construction is explicit and then proceding by product along a partition of $(X,\cX )$ (see e.g. \cite{bouleau2} or \cite{bouleau3} Chap VI \S 3). We denote by $(\Omega, \mathcal{A}, \mathbb{P})$ the configuration space where $N$ is defined, $\mathcal{A}$ is the $\sigma$-field generated by $N$ and $\mathbb{P}$ its law. The space $(\Omega, \mathcal{A}, \mathbb{P})$ is called {\em the upper space}.

The following density lemma (cf \cite{bouleau-denis}) is the key of several proofs.
 \begin{Le}{\label{lemmeDensite}} For $p\in[1,\infty[$, the set $\{ e^{-N(f)}:\ f\geq 0 , f\in L^1 (\nu )\cap
 L^{\infty} (\nu)\} $ is total in  $L^p (\Om ,\cA ,\bbP )$ and $\{
 e^{iN(f)}:\ f  \in L^1 (\nu )\cap L^{\infty} (\nu)\} $
 is total in $L^p (\Om ,\cA ,\bbP ; \bbC )$.
 \end{Le}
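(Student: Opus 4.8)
The plan is to argue by duality. For $1\leq p<\infty$ a family has dense linear span in $L^p(\Om,\cA,\bbP)$ if and only if its annihilator in the dual space $L^q$ (with $\frac1p+\frac1q=1$, and $q=\infty$ when $p=1$) is reduced to $\{0\}$. I would therefore fix $G\in L^q$ with $\E[G\,e^{-N(f)}]=0$ for every $f\geq 0$ in $L^1(\nu)\cap L^\infty(\nu)$ and try to show $G=0$ a.s. Since $\bbP$ is a probability measure we have $G\in L^q\subset L^1$, so $\dd\mu:=G\,\dd\bbP$ is a finite signed (in the complex case, complex) measure on $(\Om,\cA)$ obeying $\int e^{-N(f)}\,\dd\mu=0$ for all admissible $f$; the goal becomes $\mu=0$.

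The decisive step is to feed in step functions. Fix pairwise disjoint $A_1,\dots,A_n\in\cX$ with $\nu(A_j)<\infty$ and take $f=\sum_{j=1}^n t_j\1_{A_j}$ with $t_j\geq 0$, which indeed lies in $L^1(\nu)\cap L^\infty(\nu)$ and satisfies $e^{-N(f)}=\prod_{j=1}^n z_j^{N(A_j)}$ with $z_j=e^{-t_j}\in(0,1]$. Denoting by $\mu_n$ the image of $\mu$ under $\omega\mapsto(N(A_1),\dots,N(A_n))\in\N^n$, the hypothesis reads $\sum_{k\in\N^n}\mu_n(\{k\})\,z_1^{k_1}\cdots z_n^{k_n}=0$ for every $z\in(0,1]^n$. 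Because $\sum_k|\mu_n(\{k\})|\leq|\mu|(\Om)=\|G\|_{L^1}<\infty$, this multivariate power series converges absolutely on the closed unit polydisc; vanishing on $(0,1]^n$, which has nonempty interior, it must vanish identically by the identity theorem (applied one variable at a time), so $\mu_n=0$. Equivalently, $\E[G\,\Phi(N(A_1),\dots,N(A_n))]=0$ for every bounded $\Phi$ on $\N^n$.

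It remains to pass from cylinder functions to all of $\cA$. As $\nu$ is $\sigma$-finite, $\cA$ is generated by $\{N(A):\nu(A)<\infty\}$, and any finite family of the $N(B_i)$ is a function of the $N$-masses of the finitely many disjoint atoms of the algebra the $B_i$ generate; hence the bounded maps $\Phi(N(A_1),\dots,N(A_n))$ over disjoint finite-measure sets form a multiplicative class, stable under products and containing the constants, that generates $\cA$. The set $\{Y\in L^\infty(\cA):\E[GY]=0\}$ is a vector space, closed under bounded monotone limits by dominated convergence (recall $G\in L^1$), and it contains this class; the functional monotone class theorem then forces it to contain every bounded $\cA$-measurable $Y$. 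Choosing $Y=\operatorname{sgn}(G)$ (resp. $Y=\overline{G}/|G|$ on $\{G\neq0\}$ in the complex case) yields $\E[|G|]=0$, i.e. $G=0$, which proves totality of the first family. The second family is treated identically, $z_j=e^{it_j}$ now ranging over the unit circle and the identity theorem being replaced by uniqueness of Fourier coefficients on the torus. The one genuinely delicate point is the passage from the vanishing transform to $\mu_n=0$: this is precisely where the absolute convergence afforded by $G\in L^1$ and the analytic (resp. Fourier) uniqueness are essential.
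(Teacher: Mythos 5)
Your proof is correct. The duality reduction, the passage to step functions, the coefficient-uniqueness step (identity theorem applied one variable at a time on the polydisc, resp.\ uniqueness of Fourier coefficients on the torus), and the closing monotone class argument are all sound, and the absolute summability of the image measure $\mu_n$, coming from $G\in L^q\subset L^1(\bbP)$, is exactly what legitimates the analytic step. Note, however, that the paper itself gives no proof of this lemma (it cites \cite{bouleau-denis}), and the standard argument behind that reference is structurally lighter than yours: your entire middle section --- image measures on $\N^n$, generating functions, the identity theorem --- can be bypassed, because the family $\{e^{-N(f)}\}$ is \emph{itself} a multiplicative class. Indeed $e^{-N(f)}e^{-N(g)}=e^{-N(f+g)}$ with $f+g$ again nonnegative and in $L^1(\nu)\cap L^\infty(\nu)$, the constant $1$ corresponds to $f=0$, and $\sigma\bigl(e^{-N(f)}:\ f\bigr)=\sigma\bigl(N(A):\ \nu(A)<\infty\bigr)=\cA$ by $\sigma$-finiteness (take $f=\1_A$ to recover $N(A)$ for $\nu(A)<\infty$, then exhaust). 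So once $G\in L^q$ annihilates the family, the functional monotone class theorem applies directly to the vector space $\{Y\in L^\infty:\ \E[GY]=0\}$, with no need to first prove that $G$ annihilates all bounded functions of $(N(A_1),\ldots,N(A_n))$; the same shortcut works for the complex family, since $e^{iN(f)}e^{iN(g)}=e^{iN(f+g)}$ and $\overline{e^{iN(f)}}=e^{iN(-f)}$, both admissible. What your longer route buys is a genuinely stronger intermediate statement --- the vanishing of the finite-dimensional marginals of the signed measure $G\,\dd\bbP$ under the counting variables, i.e.\ the Laplace/Fourier uniqueness mechanism made explicit; what the direct route buys is economy: duality, multiplicativity of the exponentials, monotone class, done.
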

We set $\tN =N-\nu$, then the identity
$ \E[(\tN (f))^2 ] =\int f^2 \, d\nu,$
for $f\in L^1 (\nu )\cap L^2 (\nu)$ can be extended uniquely to
$f\in L^2 (\nu )$ and this permits to define $\tN (f)$ for $f\in L^2
(\nu )$. The Laplace characteristic functional is the basis of all subsequent formulae\begin{equation}\label{211}
\E [e^{i\tN (f)}]=e^{-\int (1-e^{i f}+if)\,d\nu}\ \ \ f\in L^2
(\nu).\end{equation} 
The creation and annihilation operators $\crea$ and $\anni$ well-known in quantum
mechanics (see Meyer \cite{meyer},  Nualart-Vives \cite{nualart-vives}, Picard \cite{picard} etc.) will play a central role for calculus on the configuration space, they are defined  in the following way:
\begin{equation}\label{defin}\begin{array}{l} \forall x,w\in\Om,\ \crea_x (w)=w{\bf 1}_{\{ x\in supp
\, w\}}+(w+\varepsilon_x) {\bf 1}_{\{ x\notin supp \, w\}}\\
\forall x,w\in\Omega,\ \anni_x(w)=w{\bf 1}_{\{ x\notin supp
\, w\}}+(w-\varepsilon_x) {\bf 1}_{\{ x\in supp \, w\}}.
\end{array}\end{equation} 
One can verify that for all $w\in\Om$,
\begin{equation}\label{233}
\crea_x (w)=w\makebox{ and }\anni_x(w)=w-\varepsilon_x \makebox{ for } N_w \makebox{-almost all }x
\end{equation}
and
\begin{equation}\label{234}
\crea_x (w)=w+\varepsilon_x\makebox{ and }\anni_x(w)=w  \makebox{ for } \nu \makebox{-almost
all }x
\end{equation}
We extend these operators to the functionals by setting:
\[ \crea H(w,x)=H(\crea_x w, x)\quad\makebox{ and }\quad\anni H(w,x)=H(\anni_xw,x).\]
This extension recommands to be careful with the order of composition since we have for instance
\begin{equation}\label{ordre}
(\anni\crea H)(x,\omega)=H(x,\crea_x\anni_x\omega)\quad(=H(x,\crea_x\omega)=\crea H)
\end{equation}
It is important to emphasize that since $\nu$ is continuous the two measures $\mathbb{P}\times\nu$ and $\mathbb{P}_N=\mathbb{P}(d\omega)N(\omega)(dx)$ defined on the same sapce $(\Omega\times X,\mathcal{A}\times\mathcal{X})$ are mutually singular. Computation needs to be careful with respect to negligible sets. The next lemma shows that the image of $\bbP\times\nu$ by
$\crea$ is nothing but $\bbP_N$ whose image by $\anni$ is $\bbP\times\nu$ :
\begin{Le}{\label{lem8}} Let $H$ be $\cA\otimes\cX$-measurable
and non negative, then
\[\E \int \crea H d\nu =\E\int H dN\quad\makebox{ and }\quad \E\int\anni HdN=\E\int Hd\nu.\]
\end{Le}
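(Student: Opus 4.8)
Both equalities are image-measure statements. Denoting also by $\crea$ and $\anni$ the maps $(w,x)\mapsto(\crea_x w,x)$ and $(w,x)\mapsto(\anni_x w,x)$ on $\Om\times X$, the first identity says that $\crea$ carries $\mathbb{P}\times\nu$ to $\mathbb{P}_N$ and the second that $\anni$ carries $\mathbb{P}_N$ back to $\mathbb{P}\times\nu$; they are the Poisson counterparts of the Mecke formula. Both reference measures are $\sigma$-finite --- take a partition $X=\union_n X_n$ with $\nu(X_n)<\infty$, so that $\mathbb{P}_N$ has total mass $\E[N(X_n)]=\nu(X_n)$ on each slab $\Om\times X_n$ --- so it is enough to verify the first identity on a multiplicative class generating $\cA\otimes\cX$ and then extend by a monotone-class argument; the second I shall deduce from the first.

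For the generating class I take $H(w,x)=g(x)\,e^{-N(f)}$ with $f,g\geq 0$ in $L^1(\nu)\cap L^\infty(\nu)$. By \eqref{234}, for $\nu$-almost every $x$ one has $\crea_x w=w+\varepsilon_x$, whence $N_{\crea_x w}(f)=N(f)+f(x)$ and $\crea H(w,x)=g(x)e^{-f(x)}e^{-N(f)}$. Integrating in $x$ and $w$ and using the Laplace functional $\E[e^{-N(f)}]=e^{-\int(1-e^{-f})\,d\nu}$ coming from \eqref{211}, the left-hand side equals $\bigl(\int g\,e^{-f}\,d\nu\bigr)\,e^{-\int(1-e^{-f})\,d\nu}$. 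For the right-hand side, $\int H\,dN=e^{-N(f)}N(g)$ since $e^{-N(f)}$ does not depend on $x$, so I must evaluate $\E[e^{-N(f)}N(g)]$; differentiating $t\mapsto\E[e^{-N(f+tg)}]=e^{-\int(1-e^{-f-tg})\,d\nu}$ at $t=0$ (justified by dominated convergence, as $N(g)e^{-tN(g)}\leq N(g)$ and $\E[N(g)]=\nu(g)<\infty$) gives the same value. Hence the two members agree on this class.

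The class $\{g(x)e^{-N(f)}\}$ is stable under multiplication, because $e^{-N(f_1)}e^{-N(f_2)}=e^{-N(f_1+f_2)}$, and it generates $\cA\otimes\cX$: the exponentials are total in $L^p(\Om,\cA,\mathbb{P})$ by Lemma \ref{lemmeDensite}, while the functions $g$ generate $\cX$. A functional monotone-class argument, performed on each slab $\Om\times X_n$ to keep every quantity finite and then passed to the limit by monotone convergence, extends $\E\int\crea H\,d\nu=\E\int H\,dN$ to all non-negative $\cA\otimes\cX$-measurable $H$. To obtain the second identity I apply the first to $H=\anni G$ with $G\geq 0$: the right-hand side is then $\E\int\anni G\,dN$, the left-hand side is $\E\int\crea\anni G\,d\nu$, and since $\crea\anni G(w,x)=G(\anni_x\crea_x w,x)$ with $\anni_x\crea_x w=\anni_x(w+\varepsilon_x)=w$ for $\nu$-almost every $x$ (because then $x\in\mathrm{supp}(w+\varepsilon_x)$), one has $\crea\anni G=G$ $\nu$-a.e., so the left-hand side reduces to $\E\int G\,d\nu$.

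The only genuinely delicate point is the bookkeeping of negligible sets forced by the mutual singularity of $\mathbb{P}\times\nu$ and $\mathbb{P}_N$: one must apply the reduction $\crea_x w=w+\varepsilon_x$ of \eqref{234} only under $\mathbb{P}\times\nu$ and the reduction $\anni_x w=w-\varepsilon_x$ of \eqref{233} only under $\mathbb{P}_N$, never mixing them, and likewise ensure in the monotone-class step that the extension is carried out with respect to the correct reference measure on each side.
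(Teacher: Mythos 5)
Your overall strategy is sound and is the standard one (the paper itself states Lemma \ref{lem8} without proof, deferring to \cite{bouleau-denis}, where the argument runs along the same lines): verify the identity on exponential functionals via the Laplace transform of $N$, extend by a monotone class argument slab by slab, and obtain the second identity from the first through $\crea\anni G=G$ $\;\bbP\times\nu$-a.e. Your computation on the class $\{g(x)e^{-N(f)}\}$ is correct (including the domination justifying the differentiation in $t$), the finiteness bookkeeping on the slabs $\Om\times X_n$ is the right way to run the monotone class theorem, and the reduction $\crea\anni G(w,x)=G(\anni_x\crea_x w,x)=G(w,x)$ $\;\bbP\times\nu$-a.e.\ correctly respects the composition convention (\ref{ordre}).

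There is, however, one step whose justification as written does not suffice, and it is precisely of the nature you warn against in your own last paragraph. The monotone class argument requires that your multiplicative class generate $\cA\otimes\cX$ \emph{exactly}. What Lemma \ref{lemmeDensite} (totality of the exponentials in $L^p(\Om,\cA,\bbP)$) gives is only that the $\sigma$-field $\cG$ generated by the $e^{-N(f)}$ coincides with $\cA$ \emph{up to $\bbP$-null sets}. That is not enough here: the left-hand measure $\mu_1(H)=\E\int\crea H\,d\nu$ integrates $H(w+\varepsilon_x,x)$, and the law of $w\mapsto w+\varepsilon_x$ is mutually singular with $\bbP$ (under $\bbP$ there is a.s.\ no atom at $x$, since $\nu$ is diffuse), so a $\bbP$-null set $D\subset\Om$ can perfectly well satisfy $\mu_1(D\times A)>0$; consequently, agreement of the two measures on $\cG\otimes\cX$ together with ``$\cG=\cA$ mod $\bbP$'' does not yield agreement on $\cA\otimes\cX$. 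The repair is immediate and should replace the appeal to Lemma \ref{lemmeDensite}: for $A\in\cX$ with $\nu(A)<\infty$ one has $1_A\in L^1(\nu)\cap L^\infty(\nu)$, $1_A\geq 0$, and $N(A)=\lim_{t\downarrow 0}\,(1-e^{-tN(1_A)})/t$ pointwise on all of $\Om$; since such $N(A)$ (with $\sigma$-finiteness handling general $A$) generate $\cA$ by definition, your exponentials generate $\cA$ exactly, and the monotone class theorem then applies as you intend. With this substitution the proof is complete.
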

We will encounter also another notion,  sometimes called a ``marked" Poisson measure associated with $N$, which needs here a rigorous construction.

We are still considering $N$ the random Poisson measure on $(X,\cX
,\nu)$ and we are given an auxiliary probability space $(R,\cR
,\rho)$. We construct a random Poisson measure $N\odot\rho$ on
$(X\times R ,\cX\otimes \cR ,\nu\times \rho )$ such that if
$N=\sum_i \varepsilon_{x_i}$ then $N\odot\rho =\sum_i
\varepsilon_{(x_i ,r_i )}$ where $(r_i )$ is a sequence of i.i.d.
random variables independent of $N$ whose common law is
$\rho$.  \\
The construction of $N\odot\rho$ follows line by line the one of
$N$. Let us recall it. We first study the case where $\nu$ is
bounded and we consider the probability space
$ (\N , \cP (\N ), P_{\nu (X)})\times (X,\cX ,\frac{\nu}{\nu
(X)})^{\Ne},$ where $P_{\nu (X)}$ denotes the Poisson law with
parameter $\nu (X)$ and we put
\[ N=\sum_{i=1}^Y \varepsilon_{x_i},\qquad(\mbox{with the convention}\; \sum_1^0=0)\]
where $Y, x_1,\cdots ,x_n ,\cdots $ denote the coordinates maps.
We introduce the probability space
\[ (\hOm,\hcA,\hbbP )=(R,\cR ,\rho )^{\Ne},\]
whose coordinates are denoted by $r_1 ,\cdots ,r_n ,\cdots$. On
the probability space $(\N , \cP (\N ), P_{\nu (X)})\times (X,\cX
,\frac{\nu}{\nu (X)})^{\Ne}\times (\hOm,\hcA,\hbbP )$, we define
the random measure $N\odot\rho =\sum_{i=1}^Y \varepsilon_{(x_i
,r_i )}$. It is a Poisson random measure on $X\times R$ with
intensity measure $\nu\times \rho$. For $f\in L^1 (\nu\times \rho)$
\begin{equation}\label{221}\hE [\int_{X\times R} fdN\odot\rho]=\int_X (\int_R f(x,r)
d\rho(r) )N(dx)\ \ \bbP-a.e.\end{equation} and if $f\in L^2
(\nu\times \rho)$ \begin{equation}\label{222}\hE [(\int_{X\times
R} fdN\odot\rho)^2 ]=(\int_X \int_R f d\rho dN)^2 -\int_X (\int_R
fd\rho)^2 dN +\int_X \int_R f^2 d\rho dN,\end{equation} where $\hE$ stands for the expectation
under the probability $\hbbP$.

If $\nu$ is $\sigma$-finite,  this construction is extended by a
standard product argument. Eventually in all cases, we have
constructed $N$ on $(\Om, \cA ,\bbP)$ and $N\odot\rho$ on $(\Om,
\cA ,\bbP)\times (\hOm, \hcA ,\hbbP)$, it is a random Poisson
measure on $X\times R$ with intensity measure
$\nu\times\rho$, and identities (\ref{221}) and
(\ref{222}) generalize as follows:
 \begin{Pro}{\label{pro6}} Let $F$ be an
$\cA\otimes\cX\otimes\cR$ measurable function such that
$\E\int_{X\times R} F^2 \, d\nu d\rho$ and $\E \int_R (\int_X
|F|d\nu)^2 d\rho$ are both finite then the following relation
holds
\begin{equation}\label{223}
\hE [(\int_{X\times R} FdN\odot\rho)^2 ]=(\int_X \int_R F d\rho
dN)^2 -\int_X (\int_R Fd\rho)^2 dN +\int_X \int_R F^2 d\rho
dN,\end{equation}
in particular if $F$ is such that $\int Fd\rho=0\;\,\mathbb{P}\times\nu$-a.e., then $\hE [(\int_{X\times R} FdN\odot\rho)^2 ]=\int_X \int_R F^2 d\rho
dN.$
\end{Pro}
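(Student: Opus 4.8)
The plan is to read (\ref{223}) as a \emph{conditional} second-moment identity: the operator $\hE$ integrates only over the marks $(r_i)$, so both sides are $\cA$-measurable random variables and it suffices to prove the equality $\bbP$-almost surely, conditionally on the configuration $N$. Since $\cA$ is generated by $N$, I would fix a configuration $\omega=\sum_i\varepsilon_{x_i}$ and exploit the very construction of $N\odot\rho$: conditionally on $\cA$ the marks $(r_i)$ are i.i.d.\ with law $\rho$ and independent of $N$, and $\int_{X\times R}F\,dN\odot\rho=\sum_i F(\omega,x_i,r_i)$. For $F$ of the elementary form $\sum_k\1_{A_k}(\omega)f_k(x,r)$ with $A_k\in\cA$ and $f_k$ deterministic, the restriction of $F$ to each cell $A_k$ is a deterministic function of $(x,r)$, so the already established identity (\ref{222}) applies verbatim on $A_k$ and gives (\ref{223}) for such $F$; the general case then follows by approximation.

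The core computation, carried out conditionally on $\cA$, is to expand the square
\[
\Big(\sum_i F(\omega,x_i,r_i)\Big)^2=\sum_i F(\omega,x_i,r_i)^2+\sum_{i\neq j}F(\omega,x_i,r_i)F(\omega,x_j,r_j)
\]
and take $\hE$ term by term. The diagonal contributes $\sum_i\int_R F(\omega,x_i,r)^2\,d\rho(r)=\int_X\int_R F^2\,d\rho\,dN$. For the off-diagonal terms, the conditional independence of $r_i$ and $r_j$ factorizes the expectation into $\big(\int_R F(\omega,x_i,r)\,d\rho\big)\big(\int_R F(\omega,x_j,r)\,d\rho\big)$; completing the double sum to a full square and removing its diagonal produces exactly $\big(\int_X\int_R F\,d\rho\,dN\big)^2-\int_X\big(\int_R F\,d\rho\big)^2dN$. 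Adding the two contributions is precisely the right-hand side of (\ref{223}).

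The delicate part — and where I expect the real work to lie — is the integrability bookkeeping that makes the term-by-term expectation and the limit in the approximation legitimate $\bbP$-a.s. Here the two hypotheses enter: $\E\int F^2\,d\nu\,d\rho<\infty$ is meant to control the diagonal term $\int\int F^2\,d\rho\,dN$, while $\E\int_R(\int_X|F|\,d\nu)^2\,d\rho<\infty$ controls the square term $(\int\int F\,d\rho\,dN)^2$. Translating these $\nu$-integrability bounds into $\bbP$-a.s.\ finiteness of the corresponding $N$-integrals is exactly what the Campbell--Mecke formulae of Lemma \ref{lem8} ($\E\int\crea H\,d\nu=\E\int H\,dN$) are for, and this is where one must be scrupulous, since $\bbP\times\nu$ and $\bbP_N$ are mutually singular and the creation operator $\crea$ is what relates integrals against one to integrals against the other. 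Once (\ref{223}) is secured, the final assertion is immediate in form: if $g:=\int_R F\,d\rho=0$ $\bbP\times\nu$-a.e., then the first two terms of the right-hand side, both built from $g$, should vanish, leaving $\hE[(\int F\,dN\odot\rho)^2]=\int_X\int_R F^2\,d\rho\,dN$. Verifying that $g=0$ $\bbP\times\nu$-a.e.\ really annihilates the $N$-integrals $\int g\,dN$ and $\int g^2\,dN$ is the one point at which the singularity of the two measures must be confronted directly — transparent when $g$ does not depend on $\omega$ (so that $\bbP_N(\{g\neq0\})=\nu(\{g\neq0\})=0$), and otherwise handled again through Lemma \ref{lem8} via the creation/annihilation operators.
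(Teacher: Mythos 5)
Your treatment of the identity (\ref{223}) itself is correct and is, in substance, the paper's own proof: the paper reduces to (\ref{222}) exactly as you do, by approximating $F$ by elementary functions of the form $\sum_k\1_{A_k}(\omega)f_k(x,r)$ and, in addition, by introducing a partition $(B_k)$ of $X$ into sets of finite $\nu$-measure (needed because (\ref{222}) was established when $\nu$ is bounded); your conditional expansion of the square, using that the marks are i.i.d.\ given $\cA$, is a transparent re-derivation of (\ref{222}) and is unobjectionable.

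The genuine gap is in your last paragraph, where you dispose of the ``in particular'' clause. For an $\omega$-dependent $g=\int_R F\,d\rho$, you assert that the implication ``$g=0$ $\bbP\times\nu$-a.e.\ $\Rightarrow\ \int g\,dN=\int g^2\,dN=0$ $\bbP$-a.s.'' is ``handled through Lemma \ref{lem8}''. It is not, and in fact no argument can close this step under the literal $\bbP\times\nu$ reading, because the implication is false: take $\nu(X)<\infty$ and $F(\omega,x,r)=\1_{\{x\in supp\,\omega\}}$. Since $\nu$ is diffuse, $F=0$ $\bbP\times\nu\times\rho$-a.e., so both integrability hypotheses hold and $\int_R F\,d\rho=0$ $\bbP\times\nu$-a.e.; yet $\int F\,dN\odot\rho=N(X)$, so $\hE[(\int F\,dN\odot\rho)^2]=N(X)^2$ while $\int_X\int_R F^2\,d\rho\,dN=N(X)$ (the full identity (\ref{223}) survives, since its right-hand side equals $N(X)^2-N(X)+N(X)$). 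The precise point is that Lemma \ref{lem8} reads $\E\int\crea H\,d\nu=\E\int H\,dN$: to kill $\int g^2\,dN$ you need $\crea(g^2)=0$ $\bbP\times\nu$-a.e., i.e.\ $g=0$ $\bbP_N$-a.e., which is a strictly stronger requirement than $g=0$ $\bbP\times\nu$-a.e.\ --- this is exactly the mutual-singularity issue you raise and then wave away. The clause is correct under the $\bbP_N$-a.e.\ reading of the hypothesis, and that is what actually holds in the paper's application (\ref{224}): there $F=\anni((\crea F)^\flat)$, one knows $\int(\crea F)^\flat\,d\rho=0$ $\bbP\times\nu$-a.e., and the annihilation half of Lemma \ref{lem8}, $\E\int\anni H\,dN=\E\int H\,d\nu$, transports this $\bbP\times\nu$-null set into a $\bbP_N$-null set for $\anni(\,\cdot\,)$; so the lemma does play the role you want, but only in that direction and only for functionals of the form $\anni H$. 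The same $\nu$-versus-$N$ confusion also affects your integrability bookkeeping (the stated hypotheses bound $\nu$-integrals, and Lemma \ref{lem8} converts $\E\int H\,dN$ into $\E\int\crea H\,d\nu$, not into $\E\int H\,d\nu$), though there the looseness originates in the statement itself.
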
 \begin{proof} Approximating first $F$
by a sequence of elementary functions and then introducing a
partition $(B_k)$ of subsets of $X$ of finite $\nu$-measure, this
identity is seen to be a consequence of (\ref{222}).
\end{proof}
Let us take the opportunity to state two formulae that we didn't mention in our preceding articles, and which may be useful in some context. Let $F$ be measurable as in Prop \ref{pro6} and say $0<F\leq 1$ then
\begin{equation}\label{fm}
\hat{\mathbb{E}}\exp{\int\log F\,dN\odot\rho}=\exp{\int(\log\int F\,d\rho})\,dN
\end{equation}
\begin{equation}\label{fmm}
\hat{\mathbb{E}}\int F\,dN\odot\rho=\int(\int F\,d\rho)\,dN
\end{equation}
whose proofs follow the same lines as the construction of $N\odot\rho$ and Prop \ref{pro6}.
\subsection{Dirichlet form on the Poisson space : the lent particle formula.}
Now, after these notions related to the pure probabilistic Poisson space,  we shall assume we have  on the bottom space a Dirichlet structure $(X,\cX
,\nu,\bbd,\gamma)$ as defined in section 2.1. And we attempt to lift up this structure to the Poisson space in a natural manner. This may be done in several ways (see e.g. the introduction of \cite{bouleau-denis}). The method we will follow is not the simplest, we choose it because it enlightens  the role of operators $\crea$ and $\anni$ in the upper gradient. 

First, thanks to (\ref{211}) we obtain the following relation: for all $f\in\bbd$ and all $h\in\cD (a)$,
\begin{equation}\label{212}
\E \left[ e^{i\tN (f)}\left( \tN (a[h])+\frac{i}2 N(\gamma
[f,h])\right)\right]=0.
\end{equation}
This relation and the explicit construction which may be done when $\nu$ is a bounded measure (cf \cite{bouleau2}) suggest a candidate for the generator of the upper structure.

Let us consider the space of test functions $$\cD_0=\mathcal{L}\{e^{i\tN (f)} \mbox{ with }
f\in \cD (a)\cap L^1 ( \nu )\mbox{ et } \gamma[f]\in L^2(\nu)\}.\hspace{1cm}$$ 
and for $U=\sum_p \lambda_p e^{i\tN (f_p)}$ in $\cD_0$, let us put
\begin{equation}{\label{214}}
A_0 [U]=\sum_p \lambda_p e^{i\tN (f_p)}(i\tN (a[f_p ])-\frac12 N
(\gamma [f_p])).\end{equation}
The procedure  to show that
$A_0$  is uniquely defined and is the generator of a Dirichlet form satisfying the hoped properties, has two steps :
first to construct an explicit gradient,
	then to use Friedrichs' property.
\subsubsection{Gradients.}
We will suppose as in section 2.1 that the bottom structure possesses a gradient that we denote from now on $(\cdot)^\flat$. For convenience we assume it satisfies the following properties

$\bullet$ constants belong to $\bbd_{loc}$ (see Bouleau-Hirsch
\cite{bouleau-hirsch2} Chap. I Definition 7.1.3.) 
\begin{equation}\label{232}
1\in \bbd_{loc} \makebox{ which implies }\ \gamma [1]=0 \makebox{
and  } 1^{\flat}=0.
\end{equation}

$\bullet$ $(.)^\flat$ is with values in the orthogonal subspace $L^2_0(R,\mathcal{R},\rho)$ of $1$ in the space $L^2(R,\mathcal{R},\rho)$. This condition is costless since for the gradient only the Hilbert structure of $H$ matters.  From now on we denote this gradient  $(.)^\flat$.   

We take for candidate of  the upper-gradient for $F\in\cD_0$ the pre-gradient
\[ F^\sharp =\int \anni((\crea F)^{\flat})\, dN\odot\rho.\]
where $N\odot\rho$ is the Poisson measure $N$  ``marked'' by $\rho$ as defined in section 2.2.

Let us remark that thanks to Prop 3 and (\ref{232}) we have
\begin{equation}\label{224}
\hE [(\int_{X\times R} \anni((\crea F)^{\flat})\, dN\odot\rho)^2 ]=\int \anni(\gamma(\crea F))  dN \qquad
\bbP\mbox{-}a.e.\end{equation}

For $f\in \cD (a)\cap L^1 ( m ),\, \gamma[f]\in L^2$,  we have $e^{i\tilde{N}(f)}\in\mathcal{D}_0$ and 
\[(e^{i\tN (f)})^\sharp=\int e^{i\tN (f)}(if)^\flat \;dN\odot\rho \]
what yields on $\mathcal{D}_0$:
\begin{equation}\label{236}\hE [F^\sharp \overline{G^\sharp} ]
=\sum_{p,q} \lambda_p \overline{\mu_q} e^{i\tN (f_p -g_q )}N(\gamma(f_p ,g_q)
)
\end{equation}
\subsubsection{Friedrichs' argument.}
This enables us to show that the representation (\ref{214}) does not depend on the expression of $U$ and that $A_0$ is indeed a symmetric negative operator on the dense subspace $\mathcal{D}_0$ of $L^2(\mathbb{P})$ so that Friedrichs' argument applies (see
\cite{bouleau-hirsch2} p.4 or \cite{bouleau3} Lemma III.28 p.48) : it can be extended to a self adjoint operator which may be proved to generate a Dirichlet form with domain $\mathbb{D}$ admitting a carr\'e du champ $\Gamma$  with a gradient extending $(.)^\sharp$. 

It remains only a technical point to verify: the fact that $\mathcal{D}_0$ be dense in $L^2(\mathbb{P})$. This is not obvious because of the condition  $\gamma[f]\in L^2(\nu)$ that we need in $\mathcal{D}_0$ in order $A_0$ take its values in $L^2(\mathbb{P})$. In \cite{bouleau-denis} we called it {\it bottom core hypothesis} (BC), it is not a real constraint in the applications. We can state  (cf \cite{bouleau-denis}) :

\begin{Th}\label{T4}{\em The formula
$$\label{31}
\forall F\in \mathbb{D},\ F^\sharp =\int_{E\times R} \anni((\crea F )^\flat)\,
dN\odot \rho ,$$ extends from $\cD_0$ to $\mathbb{D}$, 
 it is justified by the following  decomposition :
{$$\hspace{-1.5cm} F\in\mathbb{D}\stackrel{\crea-I}{\mapsto} \varepsilon^+F-F\in\underline{\mathbb{D}}\stackrel{\anni((.)^\flat)}{\mapsto}\anni((\varepsilon^+F)^\flat)\in L^2_0(\mathbb{P}_N\times\rho)\stackrel{d(N\odot\rho)}{\mapsto} F^\sharp\in L^2(\mathbb{P}\times\hat{\mathbb{P}})$$}where each operator is continuous on the range of the preceding one 
 and where  $L^2_0
(\bbP_N \times \rho )$ is the closed set of elements  $G$ in $L^2
(\bbP_N \times \rho )$ such that $\int_R G d\rho=0$ $\bbP_N$-a.s.
Furthermore for all $F\in\bbD$
$$\Gamma [F]=\hE (F^\sharp )^2 =\int_E \anni\gamma [\crea F ]\, dN.$$}\end{Th}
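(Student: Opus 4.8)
\quad The plan is to rely on the fact that the Friedrichs argument just sketched already produces the domain $\mathbb{D}$, the carr\'e du champ $\Gamma$, and a closed gradient extending the pre-gradient $(\cdot)^\sharp$ of $\cD_0$, with $\Gamma[F]=\hE(F^\sharp)^2$ on $\cD_0$. What remains is to show that this closed gradient is computed, on all of $\mathbb{D}$, by the explicit composition displayed in the statement, and to propagate \eqref{224} from $\cD_0$ to $\mathbb{D}$. I would do this by checking that each arrow of the decomposition is continuous on the range of the preceding one: the composite is then continuous from $(\mathbb{D},\|\cdot\|_{\mathbb{D}})$ into $L^2(\bbP\times\hbbP)$, and since it coincides with $(\cdot)^\sharp$ on the form-core $\cD_0$, it coincides with the closed gradient on all of $\mathbb{D}$.

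First I would treat the two outer maps, which are isometries. For $G\in L^2_0(\bbP_N\times\rho)$ the hypothesis $\int_R G\,d\rho=0$ of Proposition \ref{pro6} holds, so $\hE[(\int G\,dN\odot\rho)^2]=\int_X\int_R G^2\,d\rho\,dN$, and after taking $\E$ the map $G\mapsto\int G\,dN\odot\rho$ is an isometry of $L^2_0(\bbP_N\times\rho)$ into $L^2(\bbP\times\hbbP)$. For the middle map I would use the bottom isometry $\int_R((\crea F)^\flat)^2\,d\rho=\gamma[\crea F]$. Because $F$ is constant in the bottom variable we have $F^\flat=0$ by \eqref{232}, so the image of $\crea F-F$ under $\anni((\cdot)^\flat)$ is indeed $\anni((\crea F)^\flat)$; and since $(\cdot)^\flat$ takes its values in $L^2_0(\rho)$ while $\anni$ acts only on the configuration, one gets $\int_R\anni((\crea F)^\flat)\,d\rho=0$ and $\int_R(\anni((\crea F)^\flat))^2\,d\rho=\anni\gamma[\crea F]$. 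Hence $\anni((\crea F)^\flat)\in L^2_0(\bbP_N\times\rho)$ with squared norm $\E\int\anni\gamma[\crea F]\,dN$, which is precisely \eqref{224} averaged in $\bbP$.

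Next I would pin down the first map. On $\cD_0$ the explicit expression for $(e^{i\tN(f)})^\sharp$ together with \eqref{214} and \eqref{236} gives $\hE(F^\sharp)^2=\int\anni\gamma[\crea F]\,dN$, so that $\E\int\anni\gamma[\crea F]\,dN=\E[\Gamma[F]]$; combined with the two isometries this says that the $L^2(\bbP\times\hbbP)$-norm of the composite image equals the energy seminorm $\E[\Gamma[F]]^{1/2}$ of $F$. Thus $F\mapsto\crea F-F$ is continuous from $\mathbb{D}$ into $\underline{\mathbb{D}}$ and the whole composite is continuous from $\mathbb{D}$ into $L^2(\bbP\times\hbbP)$. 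Since $\mathbb{D}$ is by construction the closure of $\cD_0$ for $\|\cdot\|_{\mathbb{D}}$ (the density of $\cD_0$ in $L^2(\bbP)$ needed for Friedrichs being supplied by the bottom-core hypothesis (BC)), and the composite equals $(\cdot)^\sharp$ on $\cD_0$, the explicit formula extends to all of $\mathbb{D}$; passing \eqref{224} to the limit along the same approximation yields $\Gamma[F]=\hE(F^\sharp)^2=\int\anni\gamma[\crea F]\,dN$ for every $F\in\mathbb{D}$.

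The hard part will be the bookkeeping forced by the mutual singularity of $\bbP\times\nu$ and $\bbP_N$ emphasized before the statement. By \eqref{233} the increment $\crea F-F$ vanishes $\bbP_N$-a.e., so it genuinely lives in the $\bbP\times\nu$-world, where by \eqref{234} one has $\crea F=F(w+\varepsilon_x)$ and the bottom gradient $(\cdot)^\flat$ in the variable $x$ is meaningful; one must then check that for $\bbP\times\nu$-almost every $(w,x)$ this function of $x$ really lies in the bottom domain $\bbd$, and that $\anni$ carries the outcome back to the $\bbP_N$-world, Lemma \ref{lem8} together with \eqref{233}--\eqref{234} being the transport device. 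Securing this fibre-wise regularity of $\crea F$, and hence the continuity of the first arrow into $\underline{\mathbb{D}}$, for an arbitrary $F\in\mathbb{D}$ rather than merely for the exponential functionals of $\cD_0$, is the real crux; everything else is the isometric accounting described above.
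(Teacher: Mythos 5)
Your overall architecture --- pre-gradient on $\cD_0$, Friedrichs extension, then isometric accounting along the three-arrow decomposition --- is the same as the paper's (which itself defers the technical details to \cite{bouleau-denis}), and your treatment of the two outer arrows is correct: the last arrow is an isometry on $L^2_0(\bbP_N\times\rho)$ by Proposition \ref{pro6}, and the middle one follows from the bottom identity $\int_R((\crea F)^\flat)^2\,d\rho=\gamma[\crea F]$ together with the fact that $\anni$ carries $\bbP\times\nu$-classes to $\bbP_N$-classes (Lemma \ref{lem8}).

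There is however a genuine gap, and you half-acknowledge it yourself. In your third paragraph you argue: the composite has norm $(\E\Gamma[F])^{1/2}$ on $\cD_0$, hence ``$F\mapsto\crea F-F$ is continuous from $\mathbb{D}$ into $\underline{\mathbb{D}}$.'' This is a non sequitur. Boundedness on the dense core $\cD_0$ produces an \emph{abstract} continuous extension of the composite to $\mathbb{D}$; it does not show that this extension is still computed by the explicit recipe $F\mapsto\crea F-F\mapsto\anni((\crea F)^\flat)\mapsto\int(\cdot)\,dN\odot\rho$ for an arbitrary $F\in\mathbb{D}$. That identification is precisely the content of the theorem --- it is what makes the lent particle formula usable for functionals (solutions of SDE's, suprema of processes, etc.) that are nowhere near $\cD_0$ --- and it requires two things your proposal does not supply: (i) that for every $F\in\mathbb{D}$ the recipe makes sense at all, i.e.\ $\crea F-F$ belongs to $\underline{\mathbb{D}}$ (fibre-wise membership of $\crea F(\omega,\cdot)$ in $\bbd$ together with $\E\int\gamma[\crea F]\,d\nu<+\infty$); and (ii) a closedness, or identification-of-limit, argument: if $F_n\in\cD_0$ and $F_n\to F$ in $\mathbb{D}$, one must prove that the $\underline{\mathbb{D}}$-limit of $\crea F_n-F_n$ is exactly $\crea F-F$. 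The natural route for (ii) is the one the theorem's bookkeeping is designed for: along a subsequence $F_n\to F$ $\bbP$-a.s., and Lemma \ref{lem8} applied to the indicator of the exceptional set transports this to $\crea F_n\to\crea F$ $\bbP\times\nu$-a.e., which identifies the limit; the closedness of the fibre-wise bottom form then yields (i) for the limit function. Your final paragraph names this fibre-wise regularity as ``the real crux'' but leaves it entirely unproven, so what you have actually established is the formula on $\cD_0$ --- which was the starting point --- plus the existence of some continuous extension, not the theorem.
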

This main result --- that we call the {\em lent particle formula} --- implies the validity of a functional calculus  for the obtained Dirichlet structure   $(\Omega,\mathcal{A},\mathbb{P},\mathbb{D},\Gamma)$ on the Poisson space that may be sketched as follows:

Let be $H=\Phi(F_1,\ldots,F_n)$ with $\Phi\in\mathcal{C}^1\cap Lip(\mathbb{R}^n)$ 
and  $F=(F_1,\ldots,F_n)$ with $F_i\in\bbD$, we have :
$$\begin{array}{rrlll}

a)&\gamma[\crea H]&\!\!\!=\sum_{ij}\Phi^\prime_i(\crea F)\Phi^\prime_j(\crea F)\gamma[\crea F_i,\crea F_j]&\mathbb{P}\times\nu\mbox{-a.e.}\\
&&&&\\

b)&\anni\gamma[\crea H]&\!\!\!=\sum_{ij}\Phi^\prime_i(F)\Phi^\prime_j(F)\anni\gamma[\crea F_i,\crea F_j]&\mathbb{P}_N\mbox{-a.e.}\\
&&&&\\

c)&\Gamma[H]=\int\anni\gamma[\crea H]dN&\!\!\!=\sum_{ij}\Phi^\prime_i(F)\Phi^\prime_j(F)\int\anni\gamma[\crea F_i,\crea F_j]dN&\mathbb{P}\mbox{-a.e.}
\end{array}$$
\begin{Rq} {\rm Let $F\in \mathbb{D}$, by the theorem applying formula (\ref{fm}) to $F^\sharp$ gives
$$
\hat{\mathbb{E}}\exp{F^\sharp}=\hat{\mathbb{E}}\exp{\int\anni(\crea F)^\flat\,N\odot\rho}=\exp{\int\left(\log\int\exp{\anni(\crea F)^\flat} d\rho\right) dN}$$
\begin{equation}\label{diese}
=\exp{\int\left(\anni\log\int\exp{(\crea F)^\flat} d\rho\right) dN}
\end{equation} what may yield  the characteristic function of the law of $F^\sharp$ under $\mathbb{P}\times\hat{\mathbb{P}}$ : if we put $\int\exp(iu\crea F)^\flat d\rho=\exp\Psi(u)$ we obtain
$$\mathbb{E}\hat{\mathbb{E}}e^{iuF^\sharp}=\mathbb{E}\exp{\int\anni\Psi(u) dN}.
$$

}\end{Rq}
\subsubsection{Example 1.}
Let  $Y_t$ be a centered L\'evy process with  L\'evy measure $\sigma$ integrating $x^2$ and such that  a local Dirichlet structure may be constructed on $\mathbb{R}\backslash\{0\}$ with carr\'e du champ
$
\gamma[f]=x^2f^{\prime 2}(x).
$
With our notation $(X,\mathcal{X},\nu)=(\mathbb{R}_+\times\mathbb{R}\backslash\{0\},\mbox{Borelian sets},dt\times\sigma)$.

We define the gradient  $\flat$ associated with $\gamma$ by choosing $\xi$  on the auxiliary space $(\hat{\Omega},\hat{\mathcal{A}},\hat{\mathbb{P}})$ such that $\int_0^1\xi(r)dr=0$ and $\int_0^1\xi^2(r)dr=1$ and putting
$f^\flat=xf^\prime(x)\xi(r).
$

The operator $\flat$ acts as a derivation with the chain rule $(\varphi(f))^\flat=\varphi^\prime(f).f^\flat$ (for $\varphi\in\mathcal{C}^1\cap Lip$ or even only Lipschitz).

 $N$ is the  Poisson random measure associated with $Y$ with intensity $dt\times\sigma$ such that $\int_0^th(s)\;dY_s=\int{\bf 1}_{[0,t]}(s)h(s)x\tilde{N}(dsdx)$ for $h\in L^2_{loc}(\mathbb{R}_+)$. (These hypotheses imply  $1+\Delta Y_s\neq 0$ a.s.)
 
  Let us study the existence of density for the pair  $(Y_t ,\mathcal{E}xp(Y )_t)$
where $\mathcal{E}xp(Y )$ is the Dol\'eans exponential of $Y$.

$$\label{exp}\mathcal{E}xp(Y)_t=e^{Y_t}\prod_{s\leq t}(1+\Delta Y_s)e^{-\Delta Y_s}.$$

\noindent$1^0/$ We add a particle $(\alpha ,y)$ i.e. a jump to $Y$ at time $\alpha\leq t$ with size $y$ :
$$\varepsilon^+_{(\alpha,y)}(\mathcal{E}xp(Y)_t)=e^{Y_t+y}\prod_{s\leq t}(1+\Delta Y_s)e^{-\Delta Y_s}(1+y)e^{-y}=\mathcal{E}xp(Y)_t(1+y).$$

\noindent$2^0/$ We compute $\gamma [\varepsilon^+ \mathcal{E}xp(Y)_t](y)=(\mathcal{E}xp(Y)_t)^2 y^2 $.

\noindent$3^0/$ We take back the particle :
\[ \varepsilon^- \gamma [\varepsilon^+ \mathcal{E}xp(Y)_t]=\left( \mathcal{E}xp(Y)_t (1+y)^{-1}\right)^2 y^2\]
we integrate in  $N$  and that gives the upper carr\'e du champ operator (lent particle formula):
\[
\begin{array}{rl} \Gamma [\mathcal{E}xp(Y)_t]&=\int_{[0,t]\times\mathbb{R}}\left( \mathcal{E}xp(Y)_t (1+y)^{-1}\right)^2 y^2N(d\alpha dy)\\
&=\sum_{\alpha \leq t}\left( \mathcal{E}xp(Y)_t (1+\Delta Y_{\alpha})^{-1}\right)^2 \Delta Y_{\alpha}^2.
\end{array}\]
By a similar computation  the matrix $\underline{\underline{\Gamma}}$ of the pair $(Y_t ,\mathcal{E}xp(Y_t ))$ is given by
\[ \underline{\underline{\Gamma}}=\sum_{\alpha \leq t}\left(
                                \begin{array}{cc}
                                  1 &  \mathcal{E}xp(Y)_t (1+\Delta Y_{\alpha})^{-1}\\
                                  \mathcal{E}xp(Y)_t (1+\Delta Y_{\alpha})^{-1} & \left( \mathcal{E}xp(Y)_t (1+\Delta Y_{\alpha})^{-1}\right)^2 \\
                                \end{array}
                              \right)\Delta Y_{\alpha}^2 .
\]
Hence under hypotheses implying (EID) the density of the pair $(Y_t ,\mathcal{E}xp(Y_t ))$ is yielded by the condition
$$\mbox{dim  } \mathcal{L}\left(\left(\begin{array}{c}
1\\
\mathcal{E}xp(Y)_t(1+\Delta Y_\alpha)^{-1}
\end{array}\right)\quad \alpha\in JT\right)=2
$$
where $JT$ denotes the jump times of $Y$ between 0 and $t$. 

Making this in details we obtain

{\it Let $Y$ be a real L\'evy process with infinite L\'evy measure with density dominating a positive continuous function  $\neq0$ near  $0$, then the pair $(Y_t,\mathcal{E}xp(Y)_t)$ possesses a density on  $\mathbb{R}^2$.}

\subsubsection{Example 2.}
Let $Y$ be a real L\'evy process as in the preceding example. 

Let us consider a real c\`adl\`ag process $K$ independent of $Y$ and put $H_s=Y_s+K_s$. Putting $M=\sup_{s\leq t}H_s$ and computing successively $(\varepsilon^+M)$, 
$\gamma[\varepsilon^+M]$ and applying the lent particle formula gives
\begin{Pro}\label{proEx1} If $\sigma(\mathbb{R}\backslash\{0\})=+\infty$ and if $\mathbb{P}[\sup_{s\leq t}H_s=H_0]=0$,  the random variable $\sup_{s\leq t}H_s$ possesses a density.
\end{Pro}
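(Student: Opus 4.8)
The plan is to apply the lent particle formula of Theorem \ref{T4} to the functional $M=\sup_{s\le t}H_s$ and then to invoke the energy image density property (EID), which holds unconditionally in dimension one, in order to deduce existence of a density. Since here the bottom space is $(\R_+\times(\R\setminus\{0\}),\,dt\times\sigma)$ with carré du champ $\gamma[f]=x^2f'^2(x)$, I would first check that $M\in\bbD$. As the map $h\mapsto\sup_{s\le t}h(s)$ is $1$-Lipschitz, this is obtained by approximating $M$ by the finite maxima $\max_{s\in S}H_s$ over finite sets $S$ of rationals and passing to the limit using closedness of the upper form; each such maximum lies in $\bbD$ because $Y_s$ is a first order Poisson integral (hence in $\bbD$) while $K$, being independent of $N$, enters only as a parameter and contributes nothing to the energy. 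Granting $M\in\bbD$, the proof reduces to computing $\Gamma[M]$ and showing it is strictly positive almost surely.

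For the computation, adding a particle $(\alpha,y)$ gives $Y$ an extra jump of size $y$ at time $\alpha\le t$, so that $\crea_{(\alpha,y)}H_s=H_s+y\1_{\{s\ge\alpha\}}$ and hence
\[\crea_{(\alpha,y)}M=\max\Big(\sup_{s<\alpha}H_s,\;y+\sup_{\alpha\le s\le t}H_s\Big).\]
Writing $a(\alpha)=\sup_{s<\alpha}H_s$ and $b(\alpha)=\sup_{\alpha\le s\le t}H_s$, the map $y\mapsto\crea_{(\alpha,y)}M$ is Lipschitz with almost everywhere derivative $\1_{\{y+b(\alpha)>a(\alpha)\}}$, so by the Lipschitz chain rule for $\flat$,
\[\gamma[\crea M](\alpha,y)=y^2\,\1_{\{y+b(\alpha)>a(\alpha)\}}.\]
Taking the particle back at a genuine jump $(\alpha,y)\in\mathrm{supp}\,N$ removes the mass $y$ at $\alpha$, which leaves $a(\alpha)$ unchanged but replaces $b(\alpha)$ by $b(\alpha)-y$; the variable $y$ therefore cancels inside the indicator, giving $\anni\gamma[\crea M](\alpha,y)=y^2\,\1_{\{b(\alpha)>a(\alpha)\}}$. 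Integrating against $N$ yields
\[\Gamma[M]=\sum_{\alpha\le t,\ \Delta Y_\alpha\neq0}(\Delta Y_\alpha)^2\,\1_{\{\sup_{\alpha\le s\le t}H_s\,>\,\sup_{s<\alpha}H_s\}}.\]

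It remains to prove $\Gamma[M]>0$ almost surely. The indicator selects exactly the jump times $\alpha$ lying in the ascending region $\Lambda=\{\alpha:\sup_{[\alpha,t]}H>\sup_{[0,\alpha)}H\}$, that is, those strictly before the first instant at which the global maximum is reached. The hypothesis $\bbP[\sup_{s\le t}H_s=H_0]=0$ ensures that this maximum is almost surely attained at a strictly positive time, so by right-continuity of $H$ at $0$ the region $\Lambda$ contains an interval $(0,\epsilon)$ with $\epsilon>0$. Since $\sigma(\R\setminus\{0\})=+\infty$, the process $Y$ has infinitely many jumps in any such interval, so at least one jump time falls in $\Lambda$ and $\Gamma[M]>0$ almost surely. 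Finally, (EID) in dimension one gives $M_*(\Gamma[M]\cdot\bbP)\ll\la^1$; since $\Gamma[M]>0$ a.s. the measures $\Gamma[M]\cdot\bbP$ and $\bbP$ are equivalent, whence $M_*\bbP\ll\la^1$ and $\sup_{s\le t}H_s$ has a density.

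The main obstacle is the rigorous justification that $M\in\bbD$ together with the validity of the formal manipulation of $\crea$, $\gamma$ and $\anni$ through the finite-maximum approximation: the supremum is merely Lipschitz and not $\mathcal{C}^1$, so one must control the convergence in the upper-space norm and handle with care the mutually singular measures $\bbP\times\nu$ and $\bbP_N$ on which $\crea$ and $\anni$ respectively act (cf. the remarks following (\ref{234})). Once the approximation and the cancellation $b(\alpha)\mapsto b(\alpha)-y$ are secured, the positivity argument and (EID) complete the proof.
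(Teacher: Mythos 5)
Your proof is correct and follows exactly the route the paper intends: its own ``proof'' of this proposition is just the one-line prescription to compute $\varepsilon^+M$, then $\gamma[\varepsilon^+M]$, and apply the lent particle formula, and your computation --- the cancellation of $y$ in the indicator after applying $\varepsilon^-$, the positivity of $\Gamma[M]$ obtained from the infinitely many jumps in a small interval $(0,\epsilon)$ under the hypothesis $\mathbb{P}[\sup_{s\leq t}H_s=H_0]=0$, and the conclusion via (EID) in dimension one --- supplies precisely the details the paper omits. The one point to keep clean, which you rightly single out, is the membership $M\in\mathbb{D}$ (treating $K$ as a parameter by independence, or localizing in $\mathbb{D}_{loc}$); it follows from your finite-maxima approximation together with the uniform energy bound $\Gamma[\max_{s\in S}H_s]\leq\sum_{\alpha\leq t}(\Delta Y_\alpha)^2\in L^1(\mathbb{P})$, which guarantees that the $L^2$ limit stays in the domain of the closed form.
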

It follows that any real L\'evy process $X$ starting at zero and immediately entering $\mathbb{R}_+^\ast$, whose L\'evy measure dominates a measure $\sigma$ satisfying Hamza's condition (\cite{fukushima-oshima-takeda} p105) and infinite, is such that $\sup_{s\leq t}X_s$ has a density.

\subsubsection{Example 3. L\'evy's stochastic area.}
This example will show that the method can detect densities even when both the Malliavin matrix  is non invertible and the  L\'evy measure is singular.

Let $X(t)=(X_1(t), X_2(t))$ be a  L\'evy process with values in $\mathbb{R}^2$ with  L\'evy measure $\sigma$. We suppose  that the hypotheses of the method are fulfilled, we shall explicit this later on.

Let us consider first a general gradient on the bottom space :
$$f^\flat=f_1^\prime\xi_1+f_2^\prime\xi_2$$
{ where $f'_i =\frac{\partial f}{\partial x_i}, $ and
$\xi_1$, $\xi_2$ are functions defined on $\R^2 \times R$ which
satisfy: $\int_R \xi_1 (\cdot , r)\rho (dr)=\int_R \xi_2 (\cdot ,r
)\rho (dr)=0$, $\int_R \xi_1^2 (x_1 ,x_2 ,r)\rho
(dr)=\alpha_{11}(x_1,x_2)$, $\int_R \xi_1 (x_1 ,x_2 ,r)\xi_2 (x_1
,x_2 ,r)\rho (dr)=\alpha_{12}(x_1,x_2)$, $\int_R \xi_2^2 (x_1 ,x_2
)\rho (dr)=\alpha_{22}(x_1,x_2)$}, so that
$$\gamma[f]=\alpha_{11}f^{\prime 2}_1+2\alpha_{12}f^\prime_1f^\prime_2+\alpha_{22}f_2^{\prime2}.$$
Let us consider the following vector involving L\'evy's stochastic area
$$V=(X_1(t),X_2(t),\int_0^tX_1(s_-)dX_2(s)-\int_0^tX_2(s_-)dX_1(s)).$$
We have  for $0<\alpha < t$ and $x=(x_1 ,x_2
)\in\R^2,$
$$\varepsilon^+_{(\alpha ,x)}V=V+(x_1,x_2, X_1(\alpha_-)x_2+x_1(X_2(t)-X_2(\alpha))-X_2(\alpha_-)x_1-x_2(X_1(t)-X_1(\alpha))$$
$$\qquad=V+(x_1,x_2,x_1(X_2(t)-2X_2(\alpha))-x_2(X_1(t)-2X_1(\alpha)))$$
because $\varepsilon^+V$ is defined
$\mathbb{P}\times\nu{ \times d\alpha}$-a.e. and $\nu\times d\alpha$ is
diffuse, so
$$(\varepsilon^+V)^\flat=(\xi_1,\xi_2,\xi_1(X_2(t)-2X_2(\alpha))-\xi_2(X_1(t)-2X_1(\alpha)))$$
and
$$
\gamma[\varepsilon^+V]=\left(
\begin{array}{ccc}
\alpha_{11}&\alpha_{12}&A\alpha_{11}-B\alpha_{12}\\
\alpha_{12}&\alpha_{22}&A\alpha_{12}-B\alpha_{22}\\
A\alpha_{11}-B\alpha_{12}&A\alpha_{12}-B\alpha_{22}&A^2\alpha_{11}-2AB\alpha_{12}+B^2\alpha_{22}
\end{array}
\right)
$$
denoting
$A=(X_2(t)-2X_2(\alpha))$ and $B=(X_1(t)-2X_1(\alpha))$.

 This yields
$$\varepsilon^-A=X_2(t)-\Delta X_2(\alpha)-2X_2(\alpha_-)\qquad\mbox{let us denote it }\tilde{A}$$
$$\varepsilon^-B=X_1(t)-\Delta X_1(\alpha)-2X_1(\alpha_-)\qquad\mbox{let us denote it }\tilde{B}$$
and eventually
$$
\Gamma[V]=\sum_{\alpha\leq t}\left(
\begin{array}{ccc}
\alpha_{11}(\Delta X_\alpha)&\alpha_{12}(\Delta X_\alpha)&\tilde{A}\alpha_{11}(\Delta X_\alpha)-\tilde{B}\alpha_{12}(\Delta X_\alpha)\\
\sim&\alpha_{22}(\Delta X_\alpha)&\tilde{A}\alpha_{12}(\Delta X_\alpha)-\tilde{B}\alpha_{22}(\Delta X_\alpha)\\
\sim&\sim&\tilde{A}^2\alpha_{11}(\Delta X_\alpha)-2\tilde{A}\tilde{B}\alpha_{12}(\Delta X_\alpha)+\tilde{B}^2\alpha_{22}(\Delta X_\alpha)
\end{array}
\right)
$$ the symbol $\sim$ denoting the symmetry of the matrix.

Considering the case $\alpha_{12}=0$  let us take the  L\'evy measure of $(X_1,X_2)$ expressed in polar coordinates as
$$\nu(d\rho,d\theta)=g(\theta)d\theta.1_{]0,1[}(\rho)\frac{d\rho}{\rho}$$
with $g$ locally bounded and such that it dominates a continuous and positive function near $0$.
Then $V=(X_1(t),X_2(t), \int_0^tX_1(s_-)dX_2(s)-\int_0^tX_2(s_-)dX_1(s))$ has a density (and condition (0.4) of \cite{cancelier-chemin} or of \cite{picard} prop1.1 are not fulfilled).

Considering now the case  $\xi_2=\lambda(x_1,x_2)\xi_1$ 
which applies to $V=(X_1(t),[X_1]_t, \int_0^tX_1(s_-)d[X_1](s)-\int_0^t[X_1](s_-)dX_1(s))$.

 The L\'evy measure of $(X_1,[X_1])$ is carried by the curve  $x_2=x_1^2$. We have $\lambda(x_1,x_2)=2x_1$. We arrive to the sufficient condition :
{\it $V$ has a  density as soon as the L\'evy measure of $X_1$ is infinite and satisfies hypotheses for {\rm(BC)} and {\rm (EID)}}. (cf \cite{bouleau-denis} and \cite{bouleau-denis2}).

\section{Practice of the method.}
\subsubsection{Computation with the lent particle formula.}
The presence of operators $\crea$ and $\anni$ in the lent particle formula (Thm \ref{T4}) which exchange the mutually singular measures $\mathbb{P}_N$ and $\mathbb{P}\times\nu$, requires to be more careful than in the usual stochastic calculus where all is defined $\mathbb{P}$-a.s. We make some remarks and give some examples to help the reader to become familiar with this tool.

\subsubsection{The lent particle formula extends to $\mathbb{D}_{loc}$.} The space $\mathbb{D}_{loc}$ is a remarkable specific feature of local Dirichlet forms with carr\'e du champ : the carr\'e du champ operator extends to functions locally -- in a measurable sense -- in $\mathbb{D}$ (cf \cite{bouleau-hirsch2} Chap I \S7.1).

\noindent{\em We denote $\mathbb{D}_{loc}$ the set of applications $F: \Omega\mapsto \mathbb{R}$ such that there exists a sequence $\Omega_n\in\mathcal{A}$ such that $\cup_n\Omega_n=\Omega$ and $\exists F_n\in\mathbb{D}$ with $F=F_n$ on $\Omega_n$.}

The fact that (EID) is always true for $d=1$ (cf \cite{bouleau1}) shows that, for $F\in\mathbb{D}_{loc},$ $\Gamma[F]$ is uniquely defined and may be evaluated by $\Gamma[F_n]$ on $\Omega_n$. The operator $\sharp$ extends to $\mathbb{D}_{loc}$ by putting $F^\sharp=F_n^\sharp$ on $\Omega_n$. For $F$ in $\mathbb{D}_{loc}$, the formulae
$$F^\sharp=\int\anni((\crea F)^\flat)\,dN\odot\rho\qquad \Gamma[F]=\int\anni(\gamma[\crea F])dN$$
resume a computation done on each $\Omega_n$.
\subsubsection{Negligible sets.}
As it was recalled above at the beginning of section 3,
it is recommended to write down the negligible sets at each equality e.g.
$$\begin{array}{rcll}
\crea(\tilde{N}f)&=&\tilde{N}f+f&\mathbb{P}\times\nu\mbox{-a.e.}\\
\anni(\tilde{N}f)&=&\tilde{N}f-f&\mathbb{P}_N\mbox{-a.e.}\\
\crea(e^{i\tilde{N}f}g)&=&e^{i\tilde{N}f}e^{if}g&\mathbb{P}\times\nu\mbox{-a.e.}\\
\anni(e^{i\tilde{N}f}g)&=&e^{i\tilde{N}f}e^{-if}g&\mathbb{P}_N\mbox{-a.e.}
\end{array}
$$
\begin{Rq}\label{R6} {\rm Let us observe that if $H(\omega, x)=G(\omega)g(x)$ where $G$ is defined $\mathbb{P}$-a.s. and $g$ $\nu$-a.e.
then $H$ belongs necessarily to a single class $\mathbb{P}_N$-a.e. So that we may apply to $H$ both operators $\crea$ and $\anni$ without ambiguity. This will be used further about multiple Poisson integrals.}\end{Rq}

\subsubsection{A simplified sufficient condition.}
Theorem \ref{T4} gives a method for obtaining $\Gamma[F]$ for $F\in \mathbb{D}$ or $F\in\mathbb{D}^n $, then with the hypotheses giving (EID) it suffices to prove $\mbox{det }\Gamma[F]>0$ $\mathbb{P}$-a.s.  to assert that $F$ has a density on $\mathbb{R}^n$.
 Let us mention a stronger condition which may be also useful in some applications. By the following lemma that we leave to the reader

\begin{Le}\label{determin}{\em 
Let $M_\alpha$ be random symmetric positive matrices and $\mu(d\alpha)$ a random positive measure. Then
$\{\mbox{\rm det}\int M_\alpha \mu(d\alpha)=0\}\subset\{\int{\mbox{\rm det}}M_\alpha\mu(d\alpha)=0\},$}\end{Le}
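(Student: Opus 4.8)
The plan is to reduce everything to a pointwise-in-$\omega$ deterministic statement and then to a single quadratic-form computation. Since the claimed set inclusion is an inclusion of events, it suffices to show that for each fixed realization $\omega$ of the randomness, if $\det\int M_\alpha\,\mu(d\alpha)=0$ then $\int\det M_\alpha\,\mu(d\alpha)=0$. Once $\omega$ is frozen, $\mu$ is an ordinary positive measure and $\alpha\mapsto M_\alpha$ is a measurable family of symmetric positive (semidefinite) matrices, so the randomness plays no further role; the vector $v$ I produce below may depend on $\omega$, but that is harmless because the whole argument is run at fixed $\omega$.

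First I would set $S=\int M_\alpha\,\mu(d\alpha)$ and observe that, being an integral of symmetric positive matrices against a positive measure, $S$ is itself symmetric and positive semidefinite. Assuming $\det S=0$, the matrix $S$ has a nontrivial kernel, so I may pick a vector $v\neq 0$ with $Sv=0$, equivalently $v^{t}Sv=0$. The key step is to read this off as an integral of nonnegative scalars:
\[
0=v^{t}Sv=\int v^{t}M_\alpha v\,\mu(d\alpha),
\]
where each integrand $v^{t}M_\alpha v\geq 0$ by positivity of $M_\alpha$. Hence the vanishing of the integral forces $v^{t}M_\alpha v=0$ for $\mu$-almost every $\alpha$.

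To finish I would invoke the elementary fact that a symmetric positive semidefinite matrix $M$ satisfies $v^{t}Mv=0\Rightarrow Mv=0$ (write $M=B^{t}B$, so that $v^{t}Mv=\|Bv\|^{2}$, whence $Bv=0$ and $Mv=B^{t}Bv=0$). Applying this for $\mu$-a.e.\ $\alpha$ shows that $v\in\ker M_\alpha$, so $M_\alpha$ is singular and $\det M_\alpha=0$ for $\mu$-almost every $\alpha$. Integrating this a.e.\ vanishing gives $\int\det M_\alpha\,\mu(d\alpha)=0$, which is exactly the desired inclusion. There is essentially no obstacle here: the only point requiring a moment's care is that the kernel vector $v$ depends on $\omega$, but since the argument is carried out separately for each $\omega$ this causes no difficulty, and the sole mathematical content is the quadratic-form implication $v^{t}Mv=0\Rightarrow Mv=0$ for positive matrices.
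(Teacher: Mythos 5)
Your proof is correct, and in fact the paper offers no proof at all to compare it with: the lemma is explicitly ``left to the reader.'' Your argument --- fix $\omega$, take a kernel vector $v$ of $S=\int M_\alpha\,\mu(d\alpha)$, use positivity to get $v^{t}M_\alpha v=0$ for $\mu$-a.e.\ $\alpha$, and then the implication $v^{t}M_\alpha v=0\Rightarrow M_\alpha v=0$ for positive symmetric matrices to conclude $\det M_\alpha=0$ $\mu$-a.e. --- is precisely the standard argument the author intends, so nothing is missing.
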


\noindent it is enough to have $\int\mbox{det }\varepsilon^-(\gamma[\varepsilon^+F])dN>0$ $\mathbb{P}$-a.s. hence enough that $\mbox{det }\varepsilon^-(\gamma[\varepsilon^+F])$ be $>0$ $\mathbb{P}_N$-a.e. We obtain, by lemma 2, that a sufficient condition for the density of $F$ is $\mbox{det }\gamma[\varepsilon^+F]>0\;$ $\mathbb{P}\times\nu\times dt$-a.e. (or  equivalently that the components of the vector $(\varepsilon^+F)^\flat$ be $\mathbb{P}\times\nu\times dt$-a.e. linearly independent in $L^2(\rho)$ ).

\subsubsection{The energy image density property} (EID).
We gave  in Bouleau-Denis \cite{bouleau-denis} general conditions on the bottom structure $(X,\mathcal{X},\nu,\bbd,\gamma)$ to satisfy (EID) and for this property to be lifted up to the upper space $(\Omega,\mathcal{A},\mathbb{P},\mathbb{D},\Gamma)$. Here are these conditions in a simplified form:

\begin{Pro}{\em Suppose $(X,\mathcal{X},\nu)=(\mathbb{R}^d,\mathcal{B}(\mathbb{R}^d),k(x)dx)$ with $k$ continuous on an open set of full Lebesgue measure and suppose the carr\'e du champ operator is defined on the test functions $\mathcal{C}^\infty_K$  infinitely differentiable with compact support by the formula
\begin{equation}\label{occ}\sum_{ij}\xi_{ij}(x)\partial_if(x)\partial_j f(x)\end{equation}
where $\xi$ is locally bounded and locally elliptic i.e. for every compact $K$ there are constants $C_K<\infty$ and $c_K>0$ such that
$\forall x\in K,\,\forall c\in\mathbb{R}^d\quad C_K|c|^2\geq \sum_{i,j=1}^d \xi_{ij} (x)c_i c_j \geq
c_K |c|^2 $, then the bilinear form
\begin{equation}\label{truc} e[u,v]=\frac12 \int_{\R^r}\sum_{i,j}
\xi_{ij}(x) \partial_i u(x)\partial_j v (x) k(x)\, dx .\end{equation}
defined on $\mathcal{C}^\infty_K$ is closable and its closure defines a Dirichlet form $(e,\bbd)$ with carr\'e du champ given by {\rm(\ref{occ})}, and this structure satisfies {\rm(EID)} and {\rm(BC)}.}\end{Pro}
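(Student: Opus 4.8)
The plan is to verify the four assertions — closability, the Dirichlet/carr\'e-du-champ property, (EID) and (BC) — in turn, treating closability and (EID) as the substantive points and the other two as routine verifications on the core $\mathcal{C}^\infty_K$.

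\emph{Closability.} First I would reduce closability to the closedness of the ordinary (distributional) gradient, using only local ellipticity and the continuity of $k$. Suppose $u_n\in\mathcal{C}^\infty_K$ satisfy $u_n\to0$ in $L^2(\nu)$ and $e[u_n-u_m]\to0$; I must show $e[u_n]\to0$. The energy $2e[v]=\int\sum_{ij}\xi_{ij}\partial_iv\,\partial_jv\,k\,dx$ is the squared norm of $\nabla v$ in the Hilbert space of vector fields weighted by $\xi k\,dx$, so $\nabla u_n$ converges there to some field $G$. Fix a compact $K\subset\{k>0\}$; since $k$ is continuous on the full-measure open set $O$, the set $\{k>0\}\cap O$ is open and $k\geq\delta_K>0$ on $K$, so local ellipticity yields $u_n\to0$ and $\nabla u_n\to G$ in $L^2(K,dx)$. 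Testing $\nabla u_n$ against a smooth compactly supported function and integrating by parts forces $G=0$ on $K$. As $\{k>0\}\cap O$ carries all of $\nu$ up to a $\nu$-null set, $G=0$ $\nu$-a.e., whence $e[u_n]\to0$. This is the role of the continuity hypothesis and is the clean substitute here for Hamza's condition (cf.\ \cite{fukushima-oshima-takeda} p.105).

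\emph{Dirichlet form and carr\'e du champ.} On the closure $(e,\bbd)$ the Markov (contraction) property is read off from the chain rule on $\mathcal{C}^\infty_K$: for a normal contraction $\phi$ one has $\nabla(\phi\circ u)=(\phi'\circ u)\nabla u$ with $|\phi'|\leq1$, and since $\xi\geq0$ this gives pointwise $\sum_{ij}\xi_{ij}\partial_i(\phi\circ u)\partial_j(\phi\circ u)\leq\sum_{ij}\xi_{ij}\partial_iu\,\partial_ju$, hence $e[\phi\circ u]\leq e[u]$; the inequality passes to $\bbd$ by closability. That $\gamma[u]=\sum_{ij}\xi_{ij}\partial_iu\,\partial_ju$ is the carr\'e du champ follows by extending this bilinear expression from $\mathcal{C}^\infty_K$ to $\bbd$ with the local-ellipticity bounds and the identity $e[u]=\frac12\int\gamma[u]\,d\nu$; the functional calculus of class $\mathcal{C}^1\cap Lip$ is then the classical chain rule carried through the closure. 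These steps are routine.

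\emph{(EID).} This is the heart of the matter. The key algebraic observation is that for $U=(U_1,\dots,U_d)$ the matrix $\gamma[U,U^t]=(\gamma[U_i,U_j])$ factors as $J\,\xi\,J^t$ with $J=(\partial_kU_i)$ the Jacobian, so $\det\gamma[U,U^t]=\det\xi\cdot(\det J)^2$, and by ellipticity $\det\xi>0$ $\nu$-a.e. Thus the measure to control is $U_*[\det\xi\cdot(\det J)^2\,k\,dx]$. For smooth $U$ the area formula settles it: on $\{\det J=0\}$ the weight vanishes, while on $\{\det J\neq0\}$ the map $U$ is locally a diffeomorphism, so the image of the weighted measure is absolutely continuous. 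The real work is to pass from smooth $U$ to arbitrary $U_i\in\bbd$: I would approximate the $U_i$ in $\bbd$ by $\mathcal{C}^\infty_K$ functions, keep the Jacobian determinants controlled along the convergence, and invoke the Euclidean (EID) theorem for locally elliptic structures (Bouleau-Hirsch \cite{bouleau-hirsch2} Chap.~II \S5). I expect this approximation step — holding $\det\gamma[U,U^t]$ under control in the limit — to be the main obstacle, exactly as in the classical Wiener and Euclidean cases.

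\emph{(BC).} Finally (BC) is immediate for this structure: for $f\in\mathcal{C}^\infty_K$ the carr\'e du champ $\gamma[f]=\sum_{ij}\xi_{ij}\partial_if\,\partial_jf$ is bounded (local boundedness of $\xi$) and compactly supported, hence lies in $L^1(\nu)\cap L^2(\nu)$ since $\nu=k\,dx$ is finite on compacts; together with $f\in L^1(\nu)\cap\mathcal{D}(a)$, the functions $e^{i\tilde N(f)}$ built on such $f$ span the dense set required, so $\mathcal{D}_0$ is dense in $L^2(\mathbb{P})$ and (BC) holds.
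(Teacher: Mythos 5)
Your closability argument is sound and self-contained (modulo the minor point that the compact $K$ must be taken inside the open set $\{k>0\}\cap O$, where continuity gives the lower bound on $k$); it is in effect the Albeverio--R\"ockner criterion \cite{albeverio} on which the paper's source relies, and the Markov/carr\'e-du-champ verifications are indeed routine. Note that the paper itself offers no proof of this Proposition: it is presented as a simplified restatement of Bouleau--Denis \cite{bouleau-denis} (Section 2, Thm 2 and Section 4), whose proof of (EID) combines that closability criterion with an argument of Song \cite{song}.

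The genuine gap is exactly the step you flag and then do not carry out: (EID) for general $U\in\bbd^d$. The route you propose --- approximate $U$ in the $\bbd$-norm by $\mathcal{C}^\infty_K$ maps while ``keeping the Jacobian determinants controlled'' --- cannot work as stated, because absolute continuity of image measures is not stable under norm convergence: even if $(U_n)_*[\det\gamma[U_n,U_n^t]\cdot\nu]\ll\lambda^d$ for every $n$, and $\det\gamma[U_n,U_n^t]\to\det\gamma[U,U^t]$ in $L^1(\nu)$, the quantity $\1_A(U_n)$ for a Lebesgue-null set $A$ bears no relation to $\1_A(U)$, so one cannot pass to the limit in $\int\1_A(U_n)\det\gamma[U_n,U_n^t]\,d\nu=0$. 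This obstruction is precisely why (EID) remains, as the paper recalls, a conjecture that ``has to be established in every particular setting''; if norm approximation sufficed, the general 1986 conjecture would be settled. The arguments that do work are of a different nature: either a Lusin-type argument --- on the open set $\{k>0\}\cap O$, which carries $\nu$, elements of $\bbd$ lie in $W^{1,2}_{loc}(dx)$, and such a map coincides, together with its a.e.\ derivative, with a $\mathcal{C}^1$ map outside a set of arbitrarily small Lebesgue measure, so the area formula applies piecewise (this a.e.-coincidence mechanism, not norm approximation, is what underlies \cite{bouleau-hirsch2} Chap.~II \S 5) --- or Song's admissible-vectors argument reducing to one-dimensional structures, where (EID) always holds \cite{bouleau1}, which is the route actually taken in \cite{bouleau-denis}. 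A secondary flaw: your (BC) verification assumes $\mathcal{C}^\infty_K\subset\mathcal{D}(a)$, but with $\xi$ and $k$ merely locally bounded/continuous the candidate expression $a[f]=\frac{1}{2k}\sum_{ij}\partial_i(\xi_{ij}k\,\partial_jf)$ need not define an element of $L^2(\nu)$ for $f\in\mathcal{C}^\infty_K$, so test functions need not belong to the generator's domain; the dense class required by (BC) has to be produced differently, e.g.\ through the resolvent.
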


It is useful for many examples to remark that the preceding case allows to extend (EID) and (BC) to situations where $\nu$ is singular w.r. to Lebesgue measure.

Let $(\mathbb{R}^p\backslash\{0\},\mathcal{B}(\mathbb{R}^p\backslash\{0\}), \nu, \mathbf{d}, \gamma)$ be a Dirichlet structure on $\mathbb{R}^p\backslash\{0\}$ satisfying (EID).
Let $U: \mathbb{R}^p\backslash\{0\}\mapsto\mathbb{R}^q\backslash\{0\}$ be an injective map ($p<q$) such that $U\in \mathbf{d}^q$ . Then $U_*\nu$ is $\sigma$-finite. If we put
$$\begin{array}{rl}
\mathbf{d}_U&=\{\varphi\in L^2(U_*\nu):\varphi\circ U\in\mathbf{d}\}\\
e_U[\varphi]&=e[\varphi\circ U]\\
\gamma_U[\varphi]&=\frac{d\;U_*(\gamma[\varphi\circ U].\nu)}{d\;U_*\nu}
\end{array}
$$ then
the term $(\mathbb{R}^q\backslash\{0\},\mathcal{B}(\mathbb{R}^q\backslash\{0\}), U_*\nu, \mathbf{d}_U, \gamma_U)$ is a Dirichlet structure satisfying (EID).  Additional regularity assumptions make $U$  transport also property (BC).

Now it is possible to lift up (EID) from the bottom to the upper space if two conditions are fulfilled. First to be able to share the bottom space on a partition of sets of finite $\nu$-measure. Second that the obtained Dirichlet structures are such that any finite product satisfies (EID). The precise formulation is given in Bouleau-Denis \cite{bouleau-denis} Section 4. This covers all cases encountered in practice.
\subsection{Other examples.}
\subsubsection{Example 4. Nearest point of the origin.} This example shows the quickness of the method which has, in some sense, to be paid by the care to put on negligible sets.

Let us take for the bottom space $(\mathbb{R}^d,\mathcal{B}(\mathbb{R}^d),\nu,\bbd,\gamma)$ satisfying (BC), assuming  the identity map $j$ on $\mathbb{R}^d$  belong to $\bbd^d$ and $\gamma[|j|]>0$, the measure $\nu$ being infinite, possibly carried by a surface or a curve.
Let us consider the functional $H$ defined on $(\Omega,\mathcal{A}, \mathbb{P})$
$$H(\omega)=\inf_{x\in\mbox{supp}(\omega)}|x|.$$
The $\inf$ is reached because the measure $\nu$ is  $\sigma$-finite. We have 
$$\crea_xH=|x|\wedge H\quad\mathbb{P}\times\nu\mbox{-a.e.}$$ We will suppose that the measure $\nu$ does not charge the level surfaces of $|x|$ i.e. the spheres centered at O. Then for fixed $\omega$, $x\mapsto\crea_xH$ belongs to $\bbd$ and we have
$$(\crea_xH)^\flat=(|j|)^\flat1_{|j|\leq H}=(|j|)^\flat1_{|j|<H}\qquad \mathbb{P}\times\nu\times\rho\mbox{-a.e.}$$ The two functionals $1_{|j|\leq H}$ and $1_{|j|< H}$  equal $\mathbb{P}\times\nu$-a.e. do have the same image by $\anni$ $\mathbb{P}_N$-a.e.
$$1_{|x|\leq H(\anni_x\omega)}=1_{|x|<H(\anni_x\omega)}\quad \mathbb{P}_N\mbox{-a.e.}$$
and the lent particle formula gives
$$\Gamma[H]=\int\gamma[|j|](x)1_{|x|\leq H(\anni_x\omega)}\,N(\omega,dx)=\int\gamma[|j|](x)1_{|x|< H(\anni_x\omega)}\,N(\omega,dx)\quad \mathbb{P}\mbox{-a.s.}$$
Now this integral is easily seen to be equal to  $\gamma[|j|](x_0(\omega))$ where $x_0$ is the $\mathbb{P}$-a.s-unique point achieving the minimum of the distance of the support of $\omega$ to the origin. Thus we obtain the quite natural result that as soon as $\nu$ doesn't charge the spheres, $H$ possesses a density.

As in several other examples, the result could be extended to the case where $\nu(\mathbb{R}^d)$ be finite by conditioning by the event $\{N(\mathbb{R}^d)\geq 1\}$.
\subsubsection{Example 5. Gas of Brownian particles.} This is an extension of the preceding example to infinite dimensional setting. We consider a gas of Brownian particles in $\mathbb{R}^3$. Each particle is independent, the initial positions are distributed in $\mathbb{R}^3$ along a Poisson measure with uniform intensity. We study the lowest distance of a particle to the origin during the time interval $[0,1]$.

\noindent A) Let us begin with some properties of extrema on the Wiener space. Let be given a Brownian motion $B_t=(B_t^1,B_t^2,B_t^3)$ starting at zero, the Wiener space being endowed with the Ornstein-Uhlenbeck structure. We adopt --- only in this paragraph A) --- the following notation for this structure $(W,\mathcal{W},m,\mathbb{D},\Gamma)$ and we use the gradient with values in $L^2(\hat{W},\hat{\mathcal{W}},\hat{m})$ where $(\hat{W},\hat{\mathcal{W}},\hat{m})$ is a copy of $(W,\mathcal{W},m)$ defined by
$$(\int_0^1f(t)\cdot dB_t)^\sharp=\int_0^1f(t)\cdot d\hat{B}_t\qquad \forall f=(f_1,f_2,f_3)\in L^2([0,1]).$$
If $x\in\mathbb{R}^3$ is fixed and $\neq0$, the random variate $$K(w)=\inf_{t\in[0,1]}|x+B_t|$$ is strictly positive and in $\mathbb{D}$, by the argument developped by Nualart-Vives \cite{nualart-vives2}, using the fact that the set of Brownian paths which reach several times the minimum is negligible, we obtain
$$K^\sharp(w,\hat{w})=\frac{(x_1+B^1_{T(w)}(w))\hat{B}^1_{T(w)}(\hat{w})+
(x_2+B^2_{T(w)}(w))\hat{B}^2_{T(w)}(\hat{w})+(x_3+B^3_{T(w)}(w))\hat{B}^3_{T(w)}(\hat{w})}{|x+B_{T(w)}(w)|}$$ where $T(w)=\inf\{t\in[0,1] : |x+B_t(w)|=K(w)\}$.

It follows that
$$\Gamma[K]=\hat{\mathbb{E}}[(K^\sharp)^2]=T>0\quad\mbox{a.s. if }x\neq 0.$$
B) Let us come back to our usual notation. For the bottom space we take $(X,\mathcal{X},\nu)=(\mathbb{R}^3\times W,\mathcal{B}(\mathbb{R}^3)\times\mathcal{W}, \lambda^3\times m)$ where $\lambda^3$ is the 3-dimensional Lebesgue measure, that we equip with the product Dirichlet structure of the zero form on $\mathbb{R}^3$ and the O-U-form on the Wiener space. The structure $(X,\mathcal{X},\nu,\bbd,\gamma)$ is thus naturally endowed with a gradient induced by the gradient used in part A) and that we denote now $\flat$ as usual, it is with values in $L^2(\hat{m})$. The hypothesis (BC) is fulfilled.

We construct the upper structure $(\Omega, \mathcal{A},\mathbb{P},\mathbb{D},\Gamma)$ which describes a gas of Brownian particles. We denote $(x,w)$ the current point of $X$ and we consider the functional
$$H(\omega)=\inf_{\begin{array}{c}
t\in[0,1]\\
(x,w)\in\mbox{supp }\omega
\end{array}}|x+B_t(w)|.$$ We apply the lent particle method :
$$\crea_{(x,w)}H=(\inf_{t\in[0,1]}|x+B_t(w)|)\wedge H$$
Here the measure $\lambda^3\times m$ does not charge the level sets of $(\inf_{t\in[0,1]}|x+B_t(w)|)$ and we have
$$\begin{array}{rl}
(\crea H)^\flat&=(\inf_{t\in[0,1]}|x+B_t(w)|)^\flat1_{\{(\inf_{t\in[0,1]}|x+B_t(w)|)\leq H\}}\\
&=(\inf_{t\in[0,1]}|x+B_t(w)|)^\flat1_{\{(\inf_{t\in[0,1]}|x+B_t(w)|)< H\}}\qquad \mathbb{P}\times\nu\times\hat{m}\mbox{-a.e.}
\end{array}$$ what gives putting $\eta(x,w)=\inf_{t\in[0,1]}|x+B_t(w)|$ and $a=(x,w)$
$$\Gamma[H]=\int\gamma[\eta]1_{\{\eta(a)\leq H(\anni_{a}\omega)\}}N(\omega,da)$$
and this is equal to $\gamma[\eta]$ taken on the unique Brownian particle which yields the minimum.

Since by the part A) above this quantity is strictly positive, we can conclude that $H$ possesses a density.

The argument extends to the case where the point taken as origin is itself moving deterministically or as an independent diffusion process.

\subsubsection{Example 6. Integral of a L\'evy process.} let $Y_t$ be a L\'evy process with values in $\mathbb{R}^d$ with our usual hypotheses that the L\'evy measure $\sigma$ carries a Dirichlet form such that hold (BC) and (EID). Let us suppose in addition for simplicity that $\sigma$ integrates $|x|^2$, that $Y$ is centered without Brownian part and that the coordinate maps $x_i$ are in $\bbd$ with $\gamma[x_i,x_j]=x_ix_j\delta_{ij}$.

Let be $g\in\mathcal{C}^1\cap Lip$ from $\mathbb{R}^d$ into itself and let us consider the $d$-dimensional functional
$$H=\int_0^1g(Y_t)dt$$ which writes also $H=\int_0^1g(\int 1_{[0,t]}(s)y\,\tilde{N}(dyds))dt$ if $N$ denotes the Poisson measure associated with $(Y)$. For $0\leq \alpha\leq 1$
$$\begin{array}{rcll}
\crea_{(\alpha,y)}H&=&\int_0^\alpha g(Y_t)dt+\int_\alpha^1g(Y_s+y)ds\quad &\mathbb{P}\times\nu\mbox{-a.e.}\\
(\crea_{(\alpha,y)}H)^\flat&=&\int_\alpha^1Dg(Y_s+y)ds\cdot j^\flat(y)\quad &\mathbb{P}\times\nu\times\rho\mbox{-a.e.}
\end{array}
$$ where $Dg$ is the Jacobian matrix of $g$ and $j$ the identity map on $\mathbb{R}^d$. Then
$$\begin{array}{rcll}
\anni(\crea_{(\alpha,y)}H)^\flat&=&\int_\alpha^1Dg(Y_s)ds\cdot j^\flat(y)\quad& \mathbb{P}_N\times\rho\mbox{-a.e.}\\
\Gamma[H]&=&\sum_{\alpha\leq1}\int_\alpha^1Dg(Y_s)ds\;\gamma[j,j^t](\Delta Y_\alpha)\,\int_\alpha^1D^tg(Y_s)ds\quad&\mathbb{P}\mbox{-a.s.}
\end{array}$$the matrix $\gamma[j,j^t](\Delta Y_\alpha)$ is the $d\times d$-matrix whose diagonal is composed of the squares of the jumps $((\Delta Y^1_\alpha)^2,\ldots,(\Delta Y_\alpha^d)^2)$. If the images by the coordinate mappings of the L\'evy measure are infinite, and if the Jacobian matrix $Dg$ is regular, then $H$ has a density on $\mathbb{R}^d$.
See \cite{lifshitz} and \cite{bertoin} for related results.

\subsubsection{Example 7.  Generalized Ornstein-Uhlenbeck processes.} Let $(\xi,\eta)$ be a 2-dimensional L\'evy process starting from (0,0). The process
$$X_t=e^{\xi_t}(x+\int_0^te^{-\xi_s}\,d\eta_s)\qquad t\geq0\quad x\in\mathbb{R}$$ is a homogeneous Markov process called generalized O-U process driven by $(\xi,\eta)$ (cf \cite{carmona}). It is possible to see by the classical Malliavin calculus that if $(\xi,\eta)$  possesses a Brownian part then $X_t$ has a density. We exclude this case now and suppose that the L\'evy measure   carries a Dirichlet form satisfying (BC) in order to apply the method (without care of (EID) because $X_t$ is one dimensional).

Let us begin by computing $\Gamma[X_t]$ by the lent particle method.

\noindent Let  $(\alpha, \xi, \eta)$ denote the current point of $X=\mathbb{R}_+\times\mathbb{R}\times\mathbb{R}$,
$$\crea_{(\alpha, \xi, \eta)}\xi_t=\xi_t+\xi1_{\alpha\leq t}\qquad\crea_{(\alpha, \xi, \eta)}\xi_{t-}=\xi_{t-}+\xi1_{\alpha<t}$$ 
$$
\begin{array}{rcl}
\crea_{(\alpha, \xi, \eta)}X_t&=&
e^{\xi_t+\xi1_{\alpha\leq t}}\left[x+\int_{[0,t]}e^{-(\xi_{s-}+\xi1_{\alpha<s})}\,d(\eta_s+\eta1_{\alpha\leq s})\right]\\
(\crea X_t)^\flat&=&
e^{\xi_t+\xi1_{\alpha\leq t}}\left[x\xi^\flat+\int_{[0,\alpha]}e^{-(\xi_{s-}+\xi1_{\alpha<s})}d\eta_s\xi^\flat+e^{-\xi_{s-}}(\eta^\flat+\eta\xi^\flat)\right]\\
\anni(\crea X_t)^\flat&=&
e^{\xi_t}\left[x\xi^\flat+\int_{[0,\alpha]}e^{-\xi_{s-}}d\eta_s\xi^\flat+e^{-\xi_{\alpha-}}\eta^\flat\right]
\end{array}$$
Let $j$ be the identity map on $\mathbb{R}^2$, so that $j^\flat=(\xi^\flat,\eta^\flat)$ we obtain
$$\Gamma[X_t]=e^{2\xi_t}\int_0^t(x+\!\!\int_{[0,\alpha]}\!\!e^{-\xi_{s-}}d\eta_s\quad e^{-\xi_{\alpha-}})\gamma[j,j^t]
\left(\begin{array}{c}
x+\int_0^\alpha e^{-\xi_{s-}}d\eta_s\\
e^{-\xi_{\alpha-}}
\end{array}\right)
N(d\alpha d\xi d\eta)$$
now putting $V(\omega,x,\alpha,\xi,\eta)=\left(\begin{array}{c}
x+\int_0^\alpha e^{-\xi_{s-}}d\eta_s\\
e^{-\xi_{\alpha-}}
\end{array}\right)$ this writes
$$\Gamma[X_t]=e^{2\xi_t}\sum_{\alpha\leq t}V^t\gamma[j,j^t](\Delta\xi_\alpha,\Delta\eta_\alpha)V$$
the sum being taken on the jump times of the process $(\xi_t,\eta_t)$. Starting from this relation we discuss several cases :

1) First case $\det\gamma[j,j^t]>0$.

\noindent Since (EID) does not matter $X_t$ being real valued, the only condition is that the L\'evy measure $\sigma$ of $(\xi_t,\eta_t)$ be infinite and carry a local Dirichlet structure  $(\mathbb{R}^2\backslash\{(0,0)\},\mathcal{B}(\mathbb{R}^2\backslash\{(0,0)\}),\sigma,\bbd,\gamma)$ satisfying (BC) and such that $j\in\bbd_{loc}$  and $\det\gamma[j,j^t]>0$ $\sigma$-a.e.

\noindent No necessary and sufficient condition is known for this which would extend the Hamza condition to dimension 2, but we see by Prop 7 that this will be fulfilled as soon as $\sigma$ has a continuous density.

2) The case where $\xi_t$ and $\eta_t$ are independent.

\noindent The measure $\sigma$ is carried by the coordinate axes,
$\gamma[j,j^t](a,b)=\left(
\begin{array}{cc}
\varphi(a)1_{b=0}&0\\
0&\psi(b)1_{a=0}
\end{array}
\right)
$ and 
$$\Gamma[X_t]= e^{2\xi_t}\sum_{\alpha\leq t}\left[(x+\int_{[0,\alpha]} e^{-\xi_{s-}}d\eta_s)^2\varphi(\Delta\xi_\alpha)1_{\Delta\eta_\alpha=0}+
e^{-2\xi_{\alpha-}}\psi(\Delta\eta_\alpha)1_{\Delta\xi_\alpha=0}\right].$$
If the L\'evy measure of $(\eta_t)$ is infinite and if $\psi>0$ then $X_t$ has a density. 

\noindent If the L\'evy measure of $(\eta_t)$ is finite, then if $\varphi>0$ and if the L\'evy measure of $(\xi_t)$ is infinite $X_t$ has a density as soon as $x+\int_{[0,\alpha]} e^{-\xi_{s-}}d\eta_s$ does not vanish for little $\alpha$ hence as soon as $x\neq 0$.

In this case of independence it is also possible to use the representation in law (cf \cite{carmona} Thm 3.1)
$$X_t\stackrel{d}{=} e^{\xi_t}x+\int_0^te^{\xi_{s-}}\,d\eta_s$$ applying the lent particle method to the right hand side. That gives a little faster the same conclusion.

3) The case where $\sigma$ is carried by a curve.

\noindent We sketch only this case which involves a parametrization. Let  $M$ be a Poisson measure on $\mathbb{R}_+$ with $\sigma$-finite intensity measure $m$ and let be given a map $\Phi:u\mapsto(f(u),g(u))\in\mathbb{R}^2\backslash\{(0,0)\}$ such that we obtain our L\'evy process by image :
$$\Phi_*m=\sigma\quad\Phi_*M=N$$ $\Phi$ being injective from $\mathbb{R}_+$ into $\mathbb{R}^2\backslash\{(0,0)\}$ and such that $\lim_{a\rightarrow\infty} \Phi(a)=(0,0)$. On $\mathbb{R}_+$ we start with a Dirichlet structure  $(\mathbb{R}_+,\mathcal{B}(\mathbb{R}_+), m,\tilde{\bbd},\tilde{\gamma})$. We assume the identity $J\in\tilde{\bbd}_{loc}$ and $f$ and $g$ of class $\mathcal{C}^1\cap Lip$.

We have
$\gamma[j,j^t]=\left(\begin{array}{c}
f^{\prime 2}\quad f^\prime g^\prime\\
f^\prime g^\prime\quad g^{\prime 2}
\end{array}
\right)\tilde{\gamma}[J]
$ and we obtain $$\Gamma[X_t]=
e^{\xi_t}\int\left[f^\prime(a)(x+\int_0^\alpha e^{-\xi_{s-}}d\eta_s)+g^\prime(a)e^{-\xi_{\alpha-}}\right]^2\tilde{\gamma}[J](a)\,M(d\alpha da).$$
Let us suppose the L\'evy measure of $(\xi_t,\eta_t)$ infinite, i.e. $m$ infinite, 
and $\lim_{a\rightarrow\infty}(f^\prime(a),g^\prime(a))$ exist and be equal to $(v_1,v_2)\neq(0,0)$.

 $\Gamma[X_t]=0$ for some $\omega$ would imply $v_1x+v_2=0$ what can be realized only for one value of $x$. 
The reasoning may then be improved by considering the behaviour at the neighborhood of another time $\alpha_0$. 

\subsubsection{Example 8. Interaction potential.} Several forms of interaction potential are encountered in physics for an infinite system of interacting particles: $\exp\{-\beta\sum_{ij}\Psi(X_i-X_j)\}$ , $\alpha\beta^n\prod_{ij}g(|X_i-X_j|)$ or $\exp\{\sum_{ij}a(X_i)a(X_j)b(|X_i-X_j|)\}$ etc.

Let us consider the functional $\Phi=\int\varphi(x)\varphi(y)\psi(|x-y|^2)N(dx)N(dy)$ where the functions $\varphi$ and $\psi$ are regular, $\psi(0)=0$, $N$ being a random Poisson measure on $\mathbb{R}^3$.

After computing as usual $\crea_x\Phi$, $(\crea\Phi)^\flat$ and $\anni(\crea \Phi)^\flat$, the lent particle theorem gives
$$\Gamma[\Phi]=
\int V(x)^t\gamma[j,j^t]V(x)\; N(dx)$$ where $j$ is the identity on $\mathbb{R}^3$ and
$V(x)$ is the column vector $$V(x)=\int \left(2\varphi(\alpha)\psi(|x-\alpha]^2)\nabla\varphi(x)+
4\varphi(x)\varphi(\alpha)\psi^\prime(|x-\alpha|^2)(x-\alpha)\right)N(dx).$$
If the bottom structure is such that $\gamma[j,j^t]$ may be chosen to be the identity matrix, we have
\begin{equation}\label{potinter}
\Gamma[\Phi]=\int\left|\int F(x,y)N(dy)\right|^2N(dx)
\end{equation}
with $F=[2\psi(|x-y|^2)\nabla\varphi(x)+4\varphi(x)\psi^\prime(|x-y|^2)(x-y)]\varphi(y)$.

In order to study the positivity of $\Gamma[\Phi]$, we will use the following lemma (due to Paul L\'evy 1931) on which we will come back in the next section.
\begin{Le}\label{lemme de levy}{\em Let $f$ be measurable  on the bottom space such that $\int |f|\wedge 1\,d\nu<+\infty$.

If $\nu\{f\neq 0\}=+\infty$ then the law of $N(f)$ is continuous.}
\end{Le}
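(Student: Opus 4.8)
The natural first attempt is via characteristic functions. Since $N$ is Poisson with intensity $\nu$, the law of $N(f)=\int f\,dN$ (which is well defined and an a.s.\ absolutely convergent sum precisely because $\int|f|\wedge1\,d\nu<+\infty$) has characteristic function $\exp\bigl(\int(e^{iuf}-1)\,d\nu\bigr)$, so that
\[
|\mathbb{E}[e^{iuN(f)}]|^{2}=\exp\Bigl(-2\!\int\!(1-\cos(uf(x)))\,\nu(dx)\Bigr).
\]
By Wiener's theorem, the law of $N(f)$ is continuous if and only if the Ces\`aro mean $\frac{1}{2T}\int_{-T}^{T}|\mathbb{E}[e^{iuN(f)}]|^{2}\,du$ tends to $0$. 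A Fubini computation shows that $\frac{1}{2T}\int_{-T}^{T}\!\int(1-\cos(uf))\,d\nu\,du=\int_{\{f\ne0\}}(1-\frac{\sin(Tf)}{Tf})\,d\nu\to\nu\{f\ne0\}=+\infty$ by Fatou. The difficulty is that a divergent Ces\`aro mean of the exponent does \emph{not} force the mean of the exponential to vanish (the exponent may be large only off a set of density zero), so I would not pursue this route directly; instead I would argue through concentration functions, which settles the atom question cleanly.

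For a real random variable $Z$ write $Q(Z;\ell)=\sup_{x}\mathbb{P}(x\le Z\le x+\ell)$; the mass of the largest atom is $Q(Z;0)=\max_{a}\mathbb{P}(Z=a)$. The plan is to exhibit $N(f)$ as an infinite sum of independent ``small'' pieces. Using $\sigma$-finiteness and diffuseness of $\nu$, partition $\{f\ne0\}$ into measurable sets $A_{k}$ with $\nu(A_{k})=\lambda\le\frac12$; since $\nu\{f\ne0\}=+\infty$ there are infinitely many such pieces. Because $N$ restricted to disjoint sets is independent, the variables $Y_{k}=N(f\mathbf{1}_{A_{k}})$ are independent, each a compound Poisson law of rate $\lambda$ whose jump distribution $(f_{*}\nu|_{A_{k}})/\lambda$ charges only $\mathbb{R}\setminus\{0\}$, and $N(f)=\sum_{k}Y_{k}$ with partial sums $S_{n}=\sum_{k\le n}Y_{k}$ satisfying $N(f)=S_{n}+R_{n}$, $R_{n}\perp S_{n}$. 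Hence $Q(N(f);0)\le Q(S_{n};0)$ for every $n$, so it suffices to show $Q(S_{n};0)\to0$.

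For each piece, $\mathbb{P}(Y_{k}=0)\ge e^{-\lambda}\ge e^{-1/2}>\tfrac12\ge1-e^{-\lambda}\ge\mathbb{P}(Y_{k}=a)$ for every $a\ne0$, so the mode of $Y_{k}$ is at $0$ and
\[
1-Q(Y_{k};0)=\mathbb{P}(Y_{k}\ne0)\ge\mathbb{P}(N(A_{k})=1)=\lambda e^{-\lambda}=:c_{0}>0,
\]
because a single point of $N$ in $A_{k}\subset\{f\ne0\}$ already makes $Y_{k}\ne0$. Therefore $\sum_{k\le n}(1-Q(Y_{k};0))\ge nc_{0}\to+\infty$. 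Applying the Kolmogorov--Rogozin concentration inequality and letting the increment tend to $0$ gives an absolute constant $C$ with
\[
Q(S_{n};0)\le C\Bigl(\sum_{k=1}^{n}(1-Q(Y_{k};0))\Bigr)^{-1/2}\le C(nc_{0})^{-1/2}\xrightarrow[n\to\infty]{}0,
\]
whence $Q(N(f);0)=0$, i.e.\ $N(f)$ has no atoms. The main obstacle is the passage to $\ell=0$ in the Kolmogorov--Rogozin bound (justified by the monotone limit $Q(\cdot;\ell)\downarrow Q(\cdot;0)$ as $\ell\downarrow0$) together with the verification that the atom at $0$ is the mode of each $Y_{k}$, which is exactly where the hypothesis $\int|f|\wedge1\,d\nu<+\infty$ enters: it forces the infinite $\nu$-mass of $\{f\ne0\}$ to sit on ever smaller values of $f$, so the pieces can be taken of small rate $\lambda\le\frac12$ while their number stays infinite.
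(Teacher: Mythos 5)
Your proof is correct, but it takes a genuinely different route from the paper's. The paper obtains this lemma as a direct corollary of part a) of Paul L\'evy's 1931 theorem recalled in Section 3.3: after splitting $\{f\neq 0\}$ into infinitely many disjoint pieces $A_k$ of comparable finite $\nu$-mass, the variables $Y_k=N(f\mathbf{1}_{A_k})$ are independent, $\sum_k Y_k$ converges a.s.\ to $N(f)$ (this is where $\int|f|\wedge 1\,d\nu<\infty$ enters), and for \emph{every} choice of constants $a_k$ one has $\mathbb{P}(Y_k\neq a_k)\geq \lambda e^{-\lambda}$ (take $a_k=0$: a single point of $N$ in $A_k$ forces $Y_k\neq0$; take $a_k\neq0$: the event $\{Y_k=0\}$ has probability at least $e^{-\lambda}$), so $\sum_k\mathbb{P}(Y_k\neq a_k)=+\infty$ and L\'evy's criterion gives continuity of the law. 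You use exactly the same decomposition into independent compound-Poisson pieces, but replace L\'evy's qualitative theorem by the quantitative Kolmogorov--Rogozin concentration inequality, combined with the monotone limit $Q(\cdot;\ell)\downarrow Q(\cdot;0)$ and the identification of $0$ as the mode of each $Y_k$; all of these steps are sound (the passage $\ell\downarrow 0$ works because the two sides of Rogozin's bound are monotone in opposite, favorable directions as $\ell$ decreases). What each approach buys: the paper's argument is softer --- one line once L\'evy's theorem is granted, and that theorem is precisely what the surrounding section is meant to advertise --- while yours is quantitative, yielding the explicit decay $Q(S_n;0)\leq C(n\lambda e^{-\lambda})^{-1/2}$ for the largest atom of the partial sums, at the cost of invoking Rogozin's inequality as a black box. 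Two minor points: (i) exact equality $\nu(A_k)=\lambda$ is more than you need (masses bounded above by $\tfrac12$ and below by a fixed constant suffice), and extracting such pieces rests on diffuseness/nonatomicity of $\nu$, the same fact the paper uses for its own partition constructions; (ii) your closing sentence slightly misattributes the role of $\int|f|\wedge1\,d\nu<\infty$ --- its job is to make $N(f)$ a.s.\ defined as an absolutely convergent integral, as you correctly say in your first paragraph, whereas the ability to choose pieces of small rate comes from diffuseness of $\nu$, not from this integrability.
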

That gives us the following result
\begin{Pro}{\em If $F$ is such that
{\rm(i)} $\exists G\in L^1(\nu) : |F(x,y)|\leq G(y)$,
{\rm(ii)} $\forall y \quad x\mapsto F(x,y)$ is continuous,
{\rm(iii)} $\forall x\quad\nu\{F(x,.)\}=+\infty$,
\noindent then $(\int F(x,y)\,N(dy)\neq 0)\quad \mathbb{P}$-a.s.}
\end{Pro}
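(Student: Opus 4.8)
The plan is to read the conclusion as a statement for each fixed $x$ and to obtain it directly from L\'evy's lemma (Lemma~\ref{lemme de levy}) applied to the section $f_x:=F(x,\cdot)$. First I would fix $x$ and set $f_x(y)=F(x,y)$, so that the quantity in question is $N(f_x)=\int F(x,y)\,N(dy)$. By hypothesis~(i) we have $|f_x|\leq G$ with $G\in L^1(\nu)$, hence $\int |f_x|\wedge 1\,d\nu\leq\int G\,d\nu<\infty$; this both places $f_x$ in the class covered by Lemma~\ref{lemme de levy} and guarantees that $N(f_x)$ is a.s.\ well defined (its mean modulus is bounded by $\int G\,d\nu$).

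Next I would check the non-degeneracy assumption of the lemma. Hypothesis~(iii), read as $\nu\{\,y:F(x,y)\neq 0\,\}=+\infty$, is exactly the requirement $\nu\{f_x\neq 0\}=+\infty$. Lemma~\ref{lemme de levy} then applies and tells us that the law of $N(f_x)$ under $\mathbb{P}$ is continuous, i.e.\ atomless. An atomless probability law on $\mathbb{R}$ assigns mass zero to each singleton, in particular to $\{0\}$, so $\mathbb{P}\bigl(\int F(x,y)\,N(dy)=0\bigr)=0$; this is precisely the asserted conclusion at the point $x$.

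Condition~(ii) is not needed for the pointwise statement above, but it is what makes the conclusion meaningful as $x$ varies. Indeed, by~(i) the variable $\int G\,dN$ is a.s.\ finite (its expectation equals $\int G\,d\nu$), so the sum $\sum_{y\in\mathrm{supp}\,N}F(x,y)$ is dominated, uniformly in $x$, by the $N$-summable weight $G$; together with the pointwise continuity $x\mapsto F(x,y)$ of~(ii), dominated convergence shows that $x\mapsto\int F(x,y)\,N(dy)$ is a.s.\ continuous. This is what turns $\Gamma[\Phi]=\int\bigl|\int F(x,y)\,N(dy)\bigr|^2N(dx)$ into a genuine functional and allows one, in the application that follows, to combine the present pointwise-in-$x$ result with this continuity to control the atoms of $N$.

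The argument is essentially finished once the two hypotheses of L\'evy's lemma are matched, so there is no serious obstacle; the only point to be careful about is the order of quantifiers. The proposition asserts, and L\'evy's lemma delivers, that for each fixed $x$ the random variable vanishes with probability zero; it does \emph{not} assert the uniform statement that a single null set serves simultaneously for all $x$. That uniform version is in fact too strong under (i)--(iii) alone --- a continuous random field can be a.s.\ nonzero at each fixed point while still vanishing somewhere --- which is why the hypotheses are kept pointwise and the positivity of $\Gamma[\Phi]$ is extracted from the fixed-$x$ conclusion rather than from a uniform strengthening of it.
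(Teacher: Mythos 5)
Your fixed-$x$ argument is correct and coincides with the first half of the paper's own proof: both reduce to Lemma \ref{lemme de levy} applied to the section $F(x,\cdot)$, with (i) ensuring $\int |F(x,\cdot)|\wedge 1\,d\nu<\infty$ and (iii) giving $\nu\{F(x,\cdot)\neq 0\}=+\infty$, so that the law of $\int F(x,y)\,N(dy)$ is diffuse and in particular puts no mass at $0$. But you diverge from the paper at the decisive step, and in a way that changes what is proved. The paper does not stop at fixed $x$ and does not treat (ii) as a side remark: it uses (i) and (ii) to show that, for $\omega$ outside a negligible set, $x\mapsto\int F(x,y)\,N(\omega,dy)$ is continuous (dominated convergence, $G$ being $N(\omega,dy)$-integrable a.s.), so that $\{x:\int F(x,y)N(dy)\neq 0\}$ is \emph{open}; it then applies L\'evy's lemma along a countable dense set of $x$'s and concludes that this open set ``contains a countable dense set, hence all the space'' --- that is, the paper asserts the a.s.-simultaneous version, and it is precisely that uniform version which is used on the next line to conclude that $\Phi$ has a density: $\Gamma[\Phi]=\int\bigl|\int F(x,y)N(dy)\bigr|^2N(dx)$ evaluates the field at the \emph{random atoms} of $N$, so a null set attached to each fixed $x$ is of no direct use there. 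Relative to the paper, your proposal therefore proves strictly less than what the paper's proof claims and needs.

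To be fair, your skepticism about the uniform version is mathematically well placed: an open set containing a countable dense set need not be the whole space, so the paper's final implication is not valid as written, and your heuristic (a continuous field a.s. nonzero at each fixed point yet vanishing somewhere) can be made concrete within hypotheses (i)--(iii), e.g. $F(x,y)=G(y)\sin(x-\theta(y))$ with $\theta_*\nu$ diffuse, for which $\int F(x,y)N(dy)$ is a.s. a sinusoid in $x$ and thus has zeros. The genuine gap in your proposal is its closing claim that the pointwise result ``combined with continuity'' suffices ``to control the atoms of $N$'': the atoms are random and are exactly what builds the field $x\mapsto\int F(x,y)N(dy)$, so its zero set is strongly correlated with them, and nothing in your argument (nor in L\'evy's lemma at fixed $x$) rules out that some atom sits at a zero. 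Under your weak reading of the Proposition you must therefore supply a separate argument for $\Gamma[\Phi]>0$ a.s. --- the very conclusion the paper extracts (via its questionable last step) from the uniform statement --- and your proposal does not contain one.
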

\noindent{\it Proof.} For $\omega$ outside a negligible set $F(x,.)$ is bounded in modulus by an integrable function for $N(\omega, dy)$, hence $x\mapsto\int F(x,y)N(dy)$ is continuous by dominated convergence, hence the set $\{x : \int F(x,y)N(dy)\neq 0\}$ is open; by the property (iii) and the lemma this set contains a countable dense set, hence all the space.
\hfill$\Box$

It follows that if the bottom structure satisfies (BC) $\Phi$ has a density.
\subsection{Application to SDE's.}
Let $d\in\Ne$, we consider the following SDE :
\begin{equation}\label{eq}
X_t =x+\int_0^t \int_X c(s,X_{s^-},u)\tN (ds,du)+\int_0^t \sigma
(s,X_{s^-})dZ_s
\end{equation}
where  $x\in\R^d$,  $c:\R^+
\times \R^d \times X\rightarrow\R^d$ and $\sigma:\R^+ \times \R^d \rightarrow\R^{d\times n}$, $Z$ is a semi-martingale and $\tilde{N}$ a compensated Poisson measure.

The lent particle method allows to apply the machinery of Malliavin calculus faster than usual and under a set of hypotheses that express the Lipschitz character of the coefficient and some other regularity assumptions for the details of which we refer to \cite{bouleau-denis2}.

Let us emphasize that applying the method to SDE's uses reasoning in complete functional spaces in which may be computed and solved the stochastic differential equations giving the $\sharp$ of the solution. This takes full advantage of the fact that the lent particle formula is proved not only on a set of test functions but on the space $\mathbb{D}$ itself.

In \cite{bouleau-denis2} applications are given to McKean-Vlasov type equation driven by a L\'evy process and to stable like processes.
\subsubsection{Example 9. A regular case violating H\"ormander conditions.} The following SDE driven by a two dimensional Brownian motion
\begin{equation}\label{diffusion}\left\{\begin{array}{rl}
X^1_t&=z_1+\int_0^tdB^1_s\\
X^2_t&=z_2+\int_0^t2X^1_sdB^1_s+\int_0^tdB^2_s\\
X^3_t&=z_3+\int_0^tX^1_sdB^1_s+2\int_0^tdB^2_s.
\end{array}\right.
\end{equation}
 is degenerate and the H\"ormander conditions are not fulfilled. The generator is $A=\frac{1}{2}(U_1^2+U_2^2)+V$
and its adjoint $A^\ast=\frac{1}{2}(U_1^2+U_2^2)-V$ with  
$U_1=\frac{\partial}{\partial x_1}+2x_1\frac{\partial}{\partial x_2}+x_1\frac{\partial}{\partial x_3}$,
$U_2=\frac{\partial}{\partial x_2}+2\frac{\partial}{\partial x_3}$ and $V=-\frac{\partial}{\partial z_2}-\frac{1}{2}\frac{\partial}{\partial z_3}$. The Lie brackets of these vectors vanish and the Lie algebra is of dimension 2:
the diffusion remains on the quadric of equation
$\frac{3}{4}x_1^2-x_2+\frac{1}{2}x_3-\frac{3}{4}t=C.$

Let us now consider the same equation driven by a L\'evy process :

\begin{equation}\label{machin}\left\{\begin{array}{rl}
Z^1_t&=z_1+\int_0^tdY^1_s\\
Z^2_t&=z_2+\int_0^t2Z^1_{s_-}dY^1_s+\int_0^tdY^2_s\\
Z^3_t&=z_3+\int_0^tZ^1_{s_-}dY^1_s+2\int_0^tdY^2_s
\end{array}\right.
\end{equation}
under hypotheses on the L\'evy measure  such that the bottom space may be equipped with the carr\'e du champ operator 
$\gamma[f]=y_1^2f^{\prime 2}_1+y_2^2f^{\prime 2}_2$ satisfying (BC) and (EID).
Applying the lent particle method is as usual and shows easily that if the L\'evy measures of  $Y^1$ and $Y^2$ are infinite $Z_t$ has a density on $\mathbb{R}^3$. See \cite{bouleau-denis} for details. The regularizing property is related to the fact that equation (\ref{machin}) is not under the canonical form in the sense of Kunita \cite{kunita} \cite{kunita2}. The next example, on the contrary shows a L\'evy process in $\mathbb{R}^3$ living on a hyperbolic paraboloid.

\subsubsection{Example 10.} For $\alpha\in\mathbb{R}^3$, let us consider the diffusion solution of
$$X_t=\alpha+\int_0^tU_1(X_s)\circ dB^1_s+\int_0^tU_2(X_s)\circ dB^2_s$$ where $B=(B^1,B^2)$ is a standard Brownian motion with values in $\mathbb{R}^2$, integrals being in the Stratonovich sense, and vectors $U_1$ and $U_2$ being given by
$$U_1(x)=\left(\begin{array}{c}
x_1x_3^2-a_0x_2x_3\\
x_2x_3^2+a_0x_1x_3\\
x_3(a_0^2+x_3^2)
\end{array}\right)
\qquad
U_2(x)=\left(\begin{array}{c}
x_2x_3^2+a_0x_1x_3\\
x_1x_3^2-a_0x_2x_3\\
x_3(a_0^2+x_3^2)
\end{array}\right)
$$ with $a_0=\alpha_1^2+\alpha_2^2-\alpha_3^2$.

Then the diffusion $(Z_t)$ remains on the quadric of equation
\begin{equation}\label{PH}x_1^2+x_2^2-x_3^2=a_0^2.\end{equation}
Now let us consider two independent L\'evy processes $(Y^1_t),(Y^2_t)$ and the equation
\begin{equation}\label{Z}Z_t=\alpha+\int_0^t U_1(Z_{s-})dY^1_s+\int_0^t U_2(Z_{s-})dY^2_s\end{equation} the Markov process with jumps $Z$ remains on the hyperbolic paraboloid (\ref{PH}) as seen by applying Ito formula. This is due to the fact that the HP   is a ruled manifold and at each point of it the jumps of $Z$ are in the direction of either  generatrix crossing at this point. Equation (\ref{Z}) is canonical in Kunita's sense. Using a map from the HP to $\mathbb{R}^2$ the method allows to show the density of the law of $Z_t$ w.r. to the area measure on the HP.
\subsection{A useful theorem of Paul L\'evy.}
It is the occasion to rectify a historical injustice about the remarkable article of Paul L\'evy 
``Sur les s\'eries dont les termes sont des variables \'eventuelles ind\'ependantes" which appeared in {\it Studia Mathematica} in 1931 \cite{levy}. This article is almost never cited up to now (today the search engins do not mention any citation of this article) and the textbooks of K.I. Sato \cite{sato2} and of J. Bertoin \cite{bertoin} do not quote it.
One of his theorems, that we recall below,  is generally attributed to Hartman and Wintner ``On the infinitesimal generator of integral convolutions" {\it Amer. J. Math.} 64, (1942) 273-298, which was published ten years later.

Paul L\'evy's results\footnote{There is an obvious misprint in this paper p128 line 20 where = has to be change into $\neq$.}
 may be stated as follows:
\begin{Th}{\it Let $X_n$ be a sequence of independent real random variables such that the series $\sum X_n$ converges almost surely.

a) If for any sequence of constants $(a_n)$, $\sum\mathbb{P}\{X_n \neq a_n\}$ diverges, $\sum X_n$ has a continuous law.

b) If there is a sequence $(a_n)$ s.t. $\sum\mathbb{P}\{X_n \neq a_n\}$ converges and if the lower bound of the total mass of the discrete part of the laws of the $X_n$'s is zero, then the law of $\sum X_n$ is continuous.}
\end{Th}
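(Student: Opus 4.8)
The plan is to prove both statements in the common form that the law $\mu$ of $S=\sum_n X_n$ has no atom, i.e. that its largest atom
\[ q(\mu)=\sup_{x\in\mathbb{R}}\mathbb{P}\{S=x\} \]
vanishes, since continuity of the law is exactly the assertion $q(\mu)=0$. Two elementary facts organize the argument. First, $q$ is non-increasing under independent sums: for independent $U,V$ one has $\mathbb{P}\{U+V=c\}=\int\mathbb{P}\{U=c-v\}\,dP_V(v)\leq q(U)$, so $q(U+V)\leq q(U)$; writing $S=S_N+R_N$ with $S_N=\sum_{n\leq N}X_n$ and $R_N=\sum_{n>N}X_n$ independent (both convergent a.s.), this gives $q(\mu)\leq q(S_N)$ for every $N$, and it suffices to prove $q(S_N)\to 0$. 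Second, any atomless component in a summand propagates to $S$: the convolution of an atomless measure $\rho$ with an arbitrary law $\sigma$ is atomless, because $(\rho*\sigma)(\{c\})=\int\rho(\{c-v\})\,d\sigma(v)=0$. (Alternatively one may invoke the Jessen--Wintner purity law, so that excluding the pure point type already forces continuity.)

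For part (a), write $d_n=\sup_x\mathbb{P}\{X_n=x\}$ for the mass of the largest atom of $X_n$. Since $\mathbb{P}\{X_n\neq a_n\}\geq 1-d_n$ for every $a_n$, with equality when $a_n$ is (almost) a mode, the hypothesis that $\sum_n\mathbb{P}\{X_n\neq a_n\}$ diverges for \emph{every} sequence $(a_n)$ is equivalent to $\sum_n(1-d_n)=\infty$. I would then control $q(S_N)$ by a concentration-function estimate. With $Q(Y;\ell)=\sup_x\mathbb{P}\{x\leq Y\leq x+\ell\}$ the L\'evy concentration function, the Kolmogorov--Rogozin inequality provides an absolute constant $C$ with
\[ Q(S_N;\ell)\leq C\Big(\sum_{n\leq N}\big(1-Q(X_n;\ell)\big)\Big)^{-1/2}\qquad(\ell>0). \]
Letting $\ell\downarrow 0$, so that $Q(\,\cdot\,;\ell)\downarrow q(\cdot)$ and $Q(X_n;\ell)\downarrow d_n$, yields $q(S_N)\leq C\big(\sum_{n\leq N}(1-d_n)\big)^{-1/2}\to 0$, whence $q(\mu)=0$ and $S$ is continuous.

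For part (b), the hypothesis $\sum_n\mathbb{P}\{X_n\neq a_n\}<\infty$ first lets me center: set $Z_n=X_n-a_n$, so that $\mathbb{P}\{Z_n\neq 0\}$ is summable, only finitely many $Z_n$ are nonzero a.s. (Borel--Cantelli), the tail $\sum_n a_n$ converges, and $S$ differs from $T:=\sum_n Z_n$ by a constant; thus $S$ is continuous iff $T$ is. Moreover $\mathbb{P}\{X_n=a_n\}=1-\mathbb{P}\{X_n\neq a_n\}\to 1$, so the total discrete mass $D_n$ of the law of $X_n$, being at least $\mathbb{P}\{X_n=a_n\}$, satisfies $D_n\to 1$. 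Consequently the lower bound $\inf_n D_n$ can equal $0$ only if it is \emph{attained}, i.e. only if some summand $X_{n_0}$ has $D_{n_0}=0$ and is therefore atomless; then $S=X_{n_0}+\sum_{n\neq n_0}X_n$ exhibits $\mu$ as the convolution of an atomless law with an independent law, so by the convolution principle $\mu$ is atomless. This is precisely where the first hypothesis is essential: without $\sum_n\mathbb{P}\{X_n\neq a_n\}<\infty$ the value $\inf_n D_n=0$ could be merely approached along a subsequence of genuinely atomic summands, and no single atomless factor would be produced.

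The main obstacle is the quantitative core of part (a), the passage from $\sum_n(1-d_n)=\infty$ to $q(S_N)\to 0$. The Kolmogorov--Rogozin inequality is the clean way to package it, and it is exactly where the scale-invariance difficulty hides: small jumps make the naive Fourier quantity $\sum_n(1-|\hat\mu_n(t)|^2)$ converge even when $\sum_n(1-d_n)=\infty$, so a direct characteristic-function bound fails and one is forced to work with the concentration function at a fixed window $\ell$ before letting $\ell\downarrow 0$. Establishing that inequality from scratch --- via a Fourier upper bound on $Q(X_n;\ell)$ followed by submultiplicativity of the resulting exponential factors --- is the only genuinely technical step; the reductions and part (b) are soft.
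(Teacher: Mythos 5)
Your proof is correct, but there is nothing in the paper to compare it against: the paper states this theorem purely as a historical record of Paul L\'evy's 1931 article (the point of that section is to correct the usual misattribution to Hartman--Wintner) and gives no proof whatsoever, so any complete argument is necessarily your own route. On its merits: part (b) is self-contained and clean --- from $\sum_n\mathbb{P}\{X_n\neq a_n\}<\infty$ you get $D_n\geq\mathbb{P}\{X_n=a_n\}\to 1$, so $\inf_n D_n=0$ can only hold if it is attained, producing one atomless summand, and atomlessness survives convolution by the direct computation you give. Your reduction of (a) to $\sum_n(1-d_n)=\infty$ is also right, since the largest atom mass $d_n$ is attained (only finitely many atoms exceed any fixed threshold), whence $\inf_a\mathbb{P}\{X_n\neq a\}=1-d_n$. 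The analytic core of (a), namely $q(S_N)\to 0$, rests entirely on the Kolmogorov--Rogozin inequality invoked as a named black box together with the fact that $Q(Y;\ell)\downarrow q(Y)$ as $\ell\downarrow 0$; both are true and standard (and there is no circularity, since the proofs of Kolmogorov--Rogozin, whether combinatorial or via Esseen's characteristic-function bound, do not use L\'evy's theorem), but note that this inequality postdates L\'evy by thirty years --- L\'evy's own 1931 argument was elementary and could not have passed through it --- and that the concentration-function limit deserves its one-line tightness-plus-compactness justification. One small blemish: your parenthetical appeal to the Jessen--Wintner purity law is inapplicable at this level of generality (purity requires the summands to be discretely distributed, which is not assumed here); it is harmless since your convolution computation, not purity, is what actually does the work.
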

It follows from this theorem that any process with independent increments whose L\'evy measure in infinite has a continuous law. In the framework of random Poisson measures it gives easily Lemma \ref{lemme de levy} above.

\begin{Rq} {\rm If $f\in L^1(\nu)$ and $\nu\{f\neq 0\}=+\infty$ then the law of $N(f)$ is continuous but its characteristic function does not necessarily tend to zero at infinity, in other words is not necessarily a Rajchman measure (cf \cite{rajchman1} \cite{rajchman2} or \cite{bouleauhal}) . This gives an easy way to construct continuous measures which are not Rajchman.
Let $m=f_*\nu$, $m$ is $\sigma$-finite and integrates $x\mapsto|x|$. Since $\mathbb{E}e^{iuNf}=e^{\int(e^{iuf}-1+iuf)d\nu}$ the law of 
$Nf$ is Rajchman iff $\lim_{|u|\rightarrow+\infty}\int(1-\cos ux)m(dx)=+\infty$. If we choose a step function for $f$ so that $m=\sum \varepsilon_{\frac{1}{2^n}}$, we have $\int(1-\cos 2^k\pi x)m(dx)=\sum_{j=0}^\infty(1-\cos\frac{\pi}{2^j})<+\infty$ so that the law of $Nf$ is continuous and not Rajchman.}\end{Rq}

\section{Regularity results for multiple Poisson integrals.}
Let us first recall some links of our study with the Fock space.
\subsection{Random Poisson measure and Fock space.}
We recall that $\nu$ is continuous (i.e.
diffuse).
Let us call {\it simple} the measurable functions $f$ defined on $(X^m ,\cX^{\otimes m})$ which are symmetric, finite sums of weighted indicator functions of sets of the form $A_1\times\cdots\times A_m$ with disjoint $A_i$'s. 

On simple functions if we define $$I_m(f)=\int_{X^m}f(x_1,\ldots,x_m)\tilde{N}(dx_1)\cdots\tilde{N}(dx_m)$$ it is easily seen that
$$\mathbb{E}[I_m(f)I_n(g)]=\delta_{m,n}n!\langle f,g\rangle_{ L^2_{} (X^m ,\cX^{\otimes m}, \nu^{\times m})}.$$
Thanks to this equality $I_m(f)$ may be extended to  $f\in L^2_{} (X^m ,\cX^{\otimes m}, \nu^{\times m})$ so that denoting $\tilde{f}$ the symmetrized $f$, $I_m(f)=I_m(\tilde{f})$ and
$$ \mathbb{E}[I_m(f)I_n(g)]=\delta_{m,n}n!\langle \tilde{f},\tilde{g}\rangle_{ L^2_{} (X^m ,\cX^{\otimes m}, \nu^{\times m})}.$$
Let us observe that for  $f\in L^2_{} (X^m ,\cX^{\otimes m}, \nu^{\times m})$ the formula
 \[ I_m (f)=\int_{X^m} f(x_1 ,\cdots ,x_m ){\bf 1}_{\{\forall i\neq j ,
x_i \neq x_j\}}\, \tN (dx_1 )\cdots \tN (dx_m ).\] 
is a symbolic notation, because on the right hand side, the quantities to be substracted to the integral on $X^m$ are generally not defined for non regular functions $f$. 

It has a sense if $f$  is well defined on diagonals by continuity, $X$ being supposed topological. 
A sense may also be  yielded by Hilbertian methods, supposing $f$ allows to define trace operators.

  The sub-vector space of $L^2 (\Om,\cA, \bbP)$ generated by the
variables $I_n (f)$, $f\in L^2_{} (X^n ,\cX^{\otimes n},
\nu^{\times n})$ is the Poisson chaos of order $n$ denoted $C_n$. The equality
\begin{equation}L^2 (\Om,\cA, \bbP)=\R\oplus_{n=1}^{+\infty} C_n .\end{equation}
has been proved by K. Ito (see \cite{ito}) in 1956. This proof  is based on
the fact that the set $\{ N(E_1)\cdots N(E_k),$ $ (E_i)$ disjoint sets in  $\cX \}$ is total in $L^2 (\Om,\cA, \bbP)$.

There are now several proofs of this result. A combinatorial proof is possible by counting the role of successive diagonals (cf \cite{rota-wallstrom} and \cite{bouleau-denis} \S4.1.) By transportation of structure, the density of the chaos has a short proof using stochastic calculus for the Poisson process on $\mathbb{R}_+$ (cf Dellacherie- Maisonneuve-Meyer \cite{dellacherie} p207).

Thanks to the density of the chaos the following expansion is easily obtained (cf \cite{surgailis}) for $u\in L^1\cap L^\infty(\nu)$ with small $\|u\|_\infty$,
\begin{equation}\label{252}
e^{N(\log(1+u))-\nu(u)}=1+\sum_{n=1}^{+\infty}\frac{1}{n!}I_n (u^{\otimes
n}).\end{equation} 
Let us mention the relationship between the strongly continuous semigroup of the bottom structure $p_t$ in $L^2(\nu)$ and the one of the upper structure $P_t$ in $L^2(\mathbb{P})$ (see \cite{bouleau-denis} for a proof).
For all $u$  measurable function with
$-\frac12 \leq u\leq 0$,
\begin{equation}\label{Pt} \forall t\geq 0 ,\ P_t [e^{N(\log (1+u))}]=e^{N(\log(1+p_t
u))}.\end{equation}
By (\ref{252}) and (\ref{Pt}) the vector spaces $C_n$ are preserved by $P_t$ and 
\begin{equation}\label{253}
P_t(I_n(u^{\otimes n}))=I_n((p_tu)^{\otimes n})).
\end{equation} It is generally spoken of second quantization for the transform $(p_t)\mapsto(P_t)$. More precisely the second quantization maps the generator $a$ of $p_t$ to an operator on the Fock space which may be then lifted up either on the Wiener space or on the Poisson space and in this later case corresponds to the generator $A$ of $P_t$.
\begin{Rq}{\rm 
Let us suppose that the bottom semigroup $p_t$ be generated by a transition kernel $\tilde{p_t}(x,dy)$ from $(X,\mathcal{X})$ into itself, which be simulatable in the sense that there exists a probability space -- that we choose here for the sake of simplicity of notation to be $(R,\mathcal{R},\rho)$ -- and a family of random variables $\eta_t(x,r)$ such that the law of $\eta_t(x,r)$ under $\rho(dr)$ be $\tilde{p_t}(x,dy)$ .

Then, using our notation in which we have  $\omega=\int\varepsilon_x\;N(dx)$, the fact that the upper semigroup represents the evolution of independent particles each governed by $p_t$ and with initial law $N$ (see the introduction of \cite{bouleau-denis}) may be expressed, for $F$ $\mathcal{A}$-measurable and bounded, by the formula
\begin{equation}\label{Pt2}
P_tF=\hat{\mathbb{E}}F(\int\varepsilon_{\eta_t(x,r)}\;N\odot\rho(dxdr))
\end{equation} in analogy with the Mehler formula for the Ornstein-Uhlenbeck semigroup on the Wiener space or extensions of it (see \cite{bouleau3} p116). Applying (\ref{Pt2}) to $F=\exp{N\log(1+g)}$ for $-\frac{1}{2}\leq g\leq 0$ gives
$$P_tF=\hat{\mathbb{E}}\exp\int\log(1+g(\eta_t(x,r))\;N\odot\rho(dxdr)$$ what by formula (\ref{fm}) leads anew to (\ref{Pt}) by a different way :

\hspace{2.5cm}$P_tF=\exp N\log(\int(1+g(\eta_t(x,r))\rho(dr))=\exp N\log(1+p_tg).$ \hfill$\Box$
}\end{Rq}
\begin{Rq} {\rm Surgailis \cite{surgailis} has shown that in the correspondence between $p_t$ and $P_t$ given by (\ref{253}) a necessary and sufficient condition $P_t$ be Markov is that $p_t$ and its adjoint be Markov operators (i.e. positivity preserving and s.t. $p_t1\leq1$).

In our framework $p_t$ is selfadjoint and so is $P_t$.\hfill$\Box$
}\end{Rq}
\subsection{Decomposition of $\mathbb{D}$ in chaos.}
Let us precise some notation. On the upper space $(\Omega, \mathcal{A}, \mathbb{P}, \mathbb{D}, \Gamma)$ the Dirichlet form is denoted $\mathcal{E}$.

The product structure $(X,\mathcal{X},\nu,\mathbf{ d},\gamma)^n$ will be denoted  $(X^n,\mathcal{X}^{\otimes n},\nu^{\times n},\mathbf{ d}_n,\gamma_n)$ (cf  \cite{bouleau-hirsch2} Chap V). It is endowed with the Dirichlet form $e_n[f]=\frac{1}{2}\int\gamma_n[f]d\nu$. The functions in $\mathbf{ d}_n$ which are symmetric define a sub-structure of $(X^n,\mathcal{X}^{\otimes n},\nu^{\times n},\mathbf{ d}_n,\gamma_n)$ denoted $(X^n,\mathcal{X}^{\otimes n}_{sym},\nu^{\times n},\mathbf{ d}_{n,sym},\gamma_n)$.  The semigroup associated with $e_n$ is denoted $p^{\otimes n}_t$. Our choice of gradient for the bottom space (see \S2.3 above) induces a gradient for $(X^n,\mathcal{X}^{\otimes n},\nu^{\times n},\mathbf{ d}_n,\gamma_n)$ that we denote $(\cdot)^{\flat_n}$ with values in $(L_0^2(R,\mathcal{R},\rho))^{\otimes n}$ :
$$(f^{\flat_n})(x_1,r_1,x_2,r_2,\cdots,x_n,r_n)=(f(\cdot, x_2,\cdots,x_n))^\flat(x_1,r_1)+
(f(x_1, \cdot,x_3,\cdots,x_n))^\flat(x_2,r_2)+\cdots$$
let us note that if $f$ is symmetric, then $f^{\flat_n}$ is symmetric of the pairs $(x_i,r_i)$.\\

\noindent Let be $f(x_1,\ldots, x_m)=f_1(x_1)\cdots f_m(x_m)\in\mathbf{ d}_m$ and 
$g(x_1,\ldots, x_n)=g_1(x_1)\cdots g_n(x_n)\in\mathbf{ d}_n$.  By polarization of  (\ref{253}) $P_tI_mf=I_mp^{\otimes m}_tf$ gives
$$\begin{array}{rcl}
\mathcal{E}_t[I_mf,I_ng]&=&\frac{1}{t}\langle I_mf-P_tI_mf,I_ng\rangle_{L^2(\mathbb{P})}\;
=\; \frac{1}{t}\langle I_m(f-p^{\otimes m}_tf),I_ng\rangle\\
&&\\
&=&\delta_{mn}m!\langle\frac{f-p^{\otimes m}_tf}{t},g\rangle_{L^2(\nu^{\times m})}.
\end{array}
$$ By the theory of symmetric strongly continuous contraction semigroups, we have $F\in \mathbb{D}$ if and only if $\lim_{t\downarrow 0}\uparrow\mathcal{E}_t[F]<+\infty$ and $\mathcal{E}[F]=\lim_{t\downarrow 0}\mathcal{E}_t[F]$. Taking $f=g$, we obtain that $I_mf\in\mathbb{D}$ and $\mathcal{E}[I_mf]=m!e_m[f]$. Then by density we obtain
\begin{Pro}\label{chaos} {\it For $f\in\mathbf{ d}_m$ the random variable $I_mf\;(=I_m(\tilde{f}))$ belongs to $\mathbb{D}$. The vector spaces $D_m$ generated by $I_mf$ for $f\in\mathbf{ d}_m$, are closed and orthogonal in $\mathbb{D}$. The sum
$$\mathbb{D}=\mathbb{R}\bigoplus_{n\geq 1}D_n$$ is direct in the sense of the Hilbert structure of $\mathbb{D}\;$ $\;(\|\cdot\|^2_\mathbb{D}=\|\cdot\|^2_{L^2}+\mathcal{E}[\cdot])$.

Every function $F$ in $\mathbb{D}$ decomposes uniquely
$$F=\mathbb{E}[F]+\sum_{n\geq 1}I_n(F_n)$$ with $F_n\in\mathbf{ d}_n$.
}\end{Pro}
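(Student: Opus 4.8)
The plan is to split the statement into three parts built on the bilinear computation already displayed just above: that each $I_m f$ lies in $\mathbb{D}$ with the energy identity $\mathcal{E}[I_m f]=m!\,e_m[f]$ (which is granted there for product functions and extended ``by density''); that the $D_m$ are mutually orthogonal and closed in $\mathbb{D}$; and that their orthogonal sum, together with the constants, exhausts $\mathbb{D}$. The first two points are essentially bookkeeping around the already-established formula, while the genuine content is the spanning, for which the strategy is to run the approximating-form characterisation $F\in\mathbb{D}\iff\lim_{t\downarrow0}\uparrow\mathcal{E}_t[F]<\infty$ on the $L^2$ chaos expansion of $F$ provided by It\^o's theorem.

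First I would make precise the density extension. Letting $t\downarrow0$ on the diagonal $m=n$, $f=g$ in $\mathcal{E}_t[I_m f,I_n g]=\delta_{mn}m!\,\langle(f-p_t^{\otimes m}f)/t,g\rangle_{L^2(\nu^{\times m})}$ gives $\mathcal{E}[I_m f]=m!\,e_m[\tilde f]$ for products, and since linear combinations of products are dense in $\mathbf{d}_m$ (product of Dirichlet structures, cf.\ \cite{bouleau-hirsch2} Chap.\ V), the map $\tilde f\mapsto I_m(\tilde f)$ is, up to the factor $\sqrt{m!}$, an isometry from the symmetric substructure $\mathbf{d}_{m,sym}$ with its Dirichlet--Hilbert norm into $\mathbb{D}$; closedness of the form then carries both the membership $I_m f\in\mathbb{D}$ and the energy formula to every $f\in\mathbf{d}_m$. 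Because $\mathbf{d}_{m,sym}$ is complete, its isometric image $D_m$ is complete, hence closed in $\mathbb{D}$. Orthogonality is immediate: for $m\neq n$ the $L^2$ pairing $\langle I_m f,I_n g\rangle$ vanishes by It\^o orthogonality and the energy pairing $\mathcal{E}[I_m f,I_n g]=\lim_t\mathcal{E}_t[I_m f,I_n g]=0$ by the Kronecker factor, so $D_m\perp D_n$ and each $D_n\perp\mathbb{R}$ for the scalar product of $\mathbb{D}$, whence the directness of the sum.

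For the spanning I would take $F\in\mathbb{D}$ and write its $L^2$ It\^o expansion $F=\mathbb{E}[F]+\sum_{n\geq1}I_n(F_n)$ with symmetric $F_n\in L^2(\nu^{\times n})$. Since $P_t$ fixes constants and preserves each chaos with $P_t I_n(F_n)=I_n(p_t^{\otimes n}F_n)$ (formula (\ref{253})), $p_t^{\otimes n}$ preserves symmetry, and distinct chaos are $L^2$-orthogonal, all cross terms drop and
\[\mathcal{E}_t[F]=\tfrac1t\langle F-P_tF,F\rangle=\sum_{n\geq1}n!\,\mathcal{E}_t^{(n)}[F_n],\qquad \mathcal{E}_t^{(n)}[h]:=\tfrac1t\langle h-p_t^{\otimes n}h,h\rangle_{L^2(\nu^{\times n})}.\]
Each summand is nonnegative and increases as $t\downarrow0$ to $e_n[F_n]\in[0,+\infty]$, while $F\in\mathbb{D}$ forces the total to increase to the finite limit $\mathcal{E}[F]$. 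Monotone convergence applied to this sum of terms each monotone in $t$ then yields at once that $\sup_t\mathcal{E}_t^{(n)}[F_n]<\infty$, i.e.\ $F_n\in\mathbf{d}_n$ for every $n$, and the identity $\mathcal{E}[F]=\sum_{n\geq1}n!\,e_n[F_n]$.

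Finally, applying the same expansion to the tail $F-\mathbb{E}[F]-\sum_{n\leq N}I_n(F_n)=\sum_{n>N}I_n(F_n)$ gives $\|F-\mathbb{E}[F]-\sum_{n\leq N}I_n(F_n)\|_{\mathbb{D}}^2=\sum_{n>N}n!\big(\|F_n\|_{L^2(\nu^{\times n})}^2+e_n[F_n]\big)\to0$, so the chaos series converges in $\mathbb{D}$ and realises the announced decomposition; uniqueness of the $F_n$ is inherited from the $L^2$ uniqueness in It\^o's theorem. I expect the only delicate step to be the interchange of the limit $t\downarrow0$ with the infinite sum over $n$ in the expansion of $\mathcal{E}_t[F]$: this is exactly where it matters that $t\mapsto\mathcal{E}_t^{(n)}$ is monotone for each fixed $n$, so that no hypothesis beyond $F\in\mathbb{D}$ is needed and the Beppo Levi theorem closes the argument. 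The supporting facts to keep explicit are that $P_t$ genuinely respects the chaos decomposition and that $p_t^{\otimes n}$ commutes with permutations.
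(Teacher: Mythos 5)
Your proof is correct and follows essentially the same route as the paper: the energy identity $\mathcal{E}[I_mf]=m!\,e_m[f]$ derived from the second-quantization relation $P_tI_mf=I_m p_t^{\otimes m}f$ and extended by density, followed by expanding $\mathcal{E}_t[F]$ along the $L^2$ chaos decomposition and invoking the monotone-limit characterization of membership in $\mathbb{D}$, term by term. The only difference is that you spell out steps the paper leaves implicit (the $\sqrt{m!}$-isometry argument justifying ``by density'', the resulting closedness of $D_m$, and the $\mathbb{D}$-norm convergence of the tail of the series), which is precisely the bookkeeping hidden behind ``the proposition follows.''
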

\begin{proof} It remains only to prove the density of the Dirichlet chaos $D_n$. Let be $F\in\mathbb{D}$ and let $F=\sum I_n(F_n)$ be its $L^2$-chaos expansion. Then 
$$\begin{array}{rcl}
\frac{1}{t}\langle F-P_tF,F\rangle_{L^2(\mathbb{P})}&=&\frac{1}{t}\sum_{n\geq1}\langle I_n(F_n-p^{\otimes n}_tF_n),I_nF_n\rangle_{L^2(\mathbb{P})}\\
&&\\
&=&\sum_{n\geq 1}n!\langle\frac{F_n-p^{\otimes n}_tF_n}{t},F_n\rangle_{L^2(\nu^{\times n})}.
\end{array}
$$ Since on the left-hand side $\frac{1}{t}\langle F-P_tF,F\rangle\uparrow \mathcal{E}[F]<+\infty$ it follows that all terms on the right-hand side, which are increasing, possess  limits what yields $F_n\in\mathbf{ d}_n$ and the proposition follows.
\end{proof}
Let us emphasize that this proof is only based on the relation of second quantization (\ref{253}) and would be still valid on the Wiener space for instance equipped with a generalized Mehler type structure (cf e.g. \cite{bouleau3} p113 et seq.) or on the Poisson space equipped with a non local Dirichlet form on the bottom space.\\

Let $u\in L^\infty\cap\mathbf{ d}$, applying the gradient operator $\sharp$ to the two sides of 
(\ref{252}) gives
$$e^{N\log(1+tu)-t\nu(u)}\int\frac{tu^\flat}{1+tu}\,dN\odot\rho=
\sum_{n\geq 1}\frac{t^n}{n!}(I_n(u^{\otimes n}))^\sharp$$ what yields, taking terms in $t^n$ on both sides
\begin{equation}\label{diese1}(I_n(u^{\otimes n}))^\sharp= \sum_{q=0}^{n-1}(-1)^q\frac{n!}{(n-1-q)!}I_{n-1-q}(u^{\otimes(n-1-q)})\int u^qu^\flat\;dN\odot\rho.\end{equation} 
and 
\begin{equation}\label{GamGam}\frac{1}{i!}\frac{1}{j!}\Gamma[I_iu^{\otimes i},I_jv^{\otimes j}]=\int\left(\sum_{k=1}^i\frac{I_{i-k}u^{\otimes(i-k)}}{(i-k)!}(-1)^ku^{k-1}\right)\left(\sum_{\ell=1}^j\frac{I_{j-\ell}v^{\otimes(j-\ell)}}{(j-\ell)!}(-1)^\ell v^{\ell-1}\right)\gamma[u,v]\,dN.\end{equation}
If $f$ is the symmetrized of $f_1(x_1)\cdots f_m(x_m)$ then (\ref{diese1}) writes
\begin{equation}\label{260}
I_m(f)^\sharp=\int \left(mI_{m-1}f^\flat-m(m-1)I_{m-2}f^\flat+m(m-1)(m-2)I_{m-3}f^\flat-\cdots\right)\;dN\odot\rho
\end{equation} where $I_{m-p}$ acts on the $m-p$ first arguments of $f$ and $\flat$ acts on the last one, all free arguments being taken on the same point $x$.

Extending formulae (\ref{diese1})-(\ref{260}) from tensor products to general functions $f\in\mathbf{ d}_m$ supposes {\it a priori}  that $f$ does possess {\it traces} on diagonals. Indeed let us suppose $f$ and $g$ be regular so that values on diagonals make sense, then defining  for regular symmetric functions $f(x_1,\ldots,x_m)$ and $g(y_1,\ldots,y_n)$ the $(k,\ell)$-$\gamma$-contraction, for $1\leq k\leq m$ and $1\leq \ell\leq n$, denoted $f\!{\tiny\begin{array}{c}\gamma\\
\asymp\\
{k,\ell}
\end{array}}\!g$ as follows
$$\begin{array}{rl}
f\!{\footnotesize\begin{array}{c}\gamma\\
\asymp\\
{k,\ell}
\end{array}}\!g&\!\!\!(x_1,\cdots,x_{m-k},y_1,\cdots,y_{n-\ell},x)=\\
&\gamma[f(x_1,\cdots,x_{m-k},x,\cdots,x,\cdot),g(y_1,\cdots,y_{n-\ell},x,\cdots,x,\cdot)](x),
\end{array}$$ the function $f\!{\tiny\begin{array}{c}\gamma\\
\asymp\\
{k,\ell}
\end{array}}\!g$ is symmetric in $(x_1,\cdots,x_{m-k})$ and in $(y_1,\cdots,y_{n-\ell})$.  Then formulae (\ref{diese1})-(\ref{260}) extend to symmetric functions $f$ and $g$ as
\begin{equation}\label{gamma}
\Gamma[I_m(f),I_n(g)]=\sum_{k=1}^m\sum_{\ell=1}^n(-1)^{k+\ell}\frac{m!n!}{(m-k)!n-\ell)!}
I_{m-k}I_{n-\ell}\int(f\!{\footnotesize\begin{array}{c}\gamma\\
\asymp\\
{k,\ell}
\end{array}}\!g)\;dN.
\end{equation} where $I_{m-k}$ operates on the $x_i$'s, $I_{n-\ell}$ operates on the $y_j$'s and $N$ on $x$.

But this formula is unsatisfactory because we know that $I_m(f)$ is defined and in $\mathbb{D}$ for general functions $f\in\mathbf{ d}_m$ which dont have defined values on diagonals in general. Actually the values on diagonals cancel in formula (\ref{gamma}). To see this we have to consider the Fock space for the gradient and to come back to the lent particle formula.

The random Poisson measure $N\odot\rho$ (cf \S2.2) is defined on $(\Omega\times\hat{\Omega},\mathcal{A}\otimes\hat{\mathcal{A}},\mathbb{P}\times\hat{\mathbb{P}})$ with intensity $\nu\times\rho$ on $(X\times R,\mathcal{X}\otimes\mathcal{R})$. It possesses an expansion in chaos : $\forall F\in L^2(\mathbb{P}\times\hat{\mathbb{P}})$
$$F=\mathbb{E}\hat{\mathbb{E}}F+\sum_{n\geq 1}J_n(F_n)$$ where $J_n$ denotes the multiple integral for $\widetilde{N\odot\rho}$ and where $F_n\in L^2_{sym}((\nu\times\rho)^{\times n})$.

Let us remark that the random Poisson measure $N$ may be seen as a function of $N\odot\rho$ and that the multiple integrals $I_n$ are nothing else but $J_n$ applied to a function $G(x_1,r_1,\cdots,x_n,r_n)$ not depending on the $r_i$'s. We can now state
\begin{Pro}\label{pro-diese}{\it  Let be $f\in\mathbf{ d}_{m,sym}$, by {\rm Prop \ref{chaos}} the multiple integral $I_m(f)$ belongs to $\mathbb{D}$. 

a) Its gradient is given by 
\begin{equation}\label{diese5}
(I_m(f))^\sharp=\int \anni(I_{m-1}(f))^\flat dN\odot\rho=m\int I_{m-1}(\varphi)\;N\odot\rho(dxdr)
\end{equation}
where we note $\psi(x_1,\ldots,x_{m-1},x,r)=(f(x_1,\ldots,x_{m-1},\cdot))^\flat(x,r)$ and $\varphi$ is defined as
$$\varphi(x_1,\ldots,x_{m-1},x,r)=\psi(x_1,\ldots,x_{m-1},x,r)1_{\{x_i\neq x\;\forall i=1,\ldots,m-1\}}$$ so that $\varphi(\cdot,\cdots,\cdot,x,r)\in L^2_{sym}(\nu^{\times(m-1)})$ and $I_{m-1}(\varphi)$ is defined. 

b) This gradient may also be written
\begin{equation}\label{gradient-chaos} (I_mf)^\sharp=J_m(f^{\flat_m})
\end{equation} so that
\begin{equation}\label{Gamma5}
\Gamma[I_m(f),I_n(g)]=\hat{\mathbb{E}}[J_m(f^{\flat_m})J_n(g^{\flat_n})].
\end{equation}
}\end{Pro}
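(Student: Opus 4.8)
The plan is to derive both formulae by specialising the lent particle formula (Theorem \ref{T4}) to $F=I_m(f)$, first on product kernels where everything is explicit and then extending by density. I would start from the elementary ``add-a-point'' identity for multiple integrals: since $\nu$ is diffuse, $\crea_x\omega=\omega+\varepsilon_x$ for $\nu$-a.e.\ $x$, and a direct expansion of the diagonal-free integral gives
$$\crea_x I_m(f)=I_m(f)+m\,I_{m-1}\big(f(\cdot,\ldots,\cdot,x)\big)\qquad\mathbb{P}\times\nu\text{-a.e.},$$
the last argument being frozen at $x$. Applying the bottom gradient $\flat$ in the variable $x$ (only the second term depends on $x$), and using that $\flat$ is a derivation commuting with the linear operation $I_{m-1}$ on the frozen argument, yields $(\crea I_m(f))^\flat=m\,I_{m-1}(\psi)$ with $\psi=(f(x_1,\dots,x_{m-1},\cdot))^\flat$. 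The first equality in (\ref{diese5}) is then just the lent particle formula with this expression inserted.

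For the second equality in part a) I would push $\anni$ through the integral. By (\ref{233})--(\ref{234}) the operator $\anni$ evaluates $I_{m-1}(\psi(\cdot,x,r))$ on the configuration $\omega-\varepsilon_x$, whose support avoids $x$; hence in that integral one may freely insert the indicator $\1_{\{x_i\neq x\,\forall i\}}$ without changing its value, replacing $\psi$ by $\varphi$. Since $\varphi(\cdot,x,r)$ vanishes as soon as one argument equals $x$, adding or removing the mass $\varepsilon_x$ is immaterial, so $\anni I_{m-1}(\psi)=I_{m-1}(\varphi)$ $\;\mathbb{P}_N\times\rho$-a.e.\ (this is the precise sense in which the ``values on diagonals cancel''). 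Integrating against $N\odot\rho$ gives the asserted $m\int I_{m-1}(\varphi)\,N\odot\rho(dx\,dr)$.

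For part b) I would first use symmetry: $f^{\flat_m}=\sum_{i=1}^m g_i$, where $g_i$ denotes $\flat$ acting on the $i$-th argument, is a sum of $m$ terms exchanged by permutations of the variables. Since $J_m$ is insensitive to such permutations, $J_m(f^{\flat_m})=m\,J_m(\psi)$, where $\psi=g_m$ depends on the marks only through $r_m$. I would then peel off the last integration, writing $J_m(\psi)=\int\Phi(x,r)\,\widetilde{N\odot\rho}(dx\,dr)$ with $\Phi$ the inner diagonal-free $(m-1)$-fold integral, and split $\widetilde{N\odot\rho}=N\odot\rho-\nu\times\rho$. The compensator term carries $\int\psi\,\rho(dr)$ in the last mark, which vanishes because $\flat$ takes its values in $L^2_0(\rho)$; thus only the $N\odot\rho$ part survives and the inner integral is exactly $I_{m-1}(\varphi)$. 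Comparison with part a) gives $(I_m f)^\sharp=J_m(f^{\flat_m})$, and (\ref{Gamma5}) follows by polarising $\Gamma[F]=\hE(F^\sharp)^2$.

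The main obstacle is not the formal computation but its justification for a general kernel $f\in\mathbf{d}_{m,sym}$ that need not admit traces on diagonals: differentiating the parametrised integral $I_{m-1}(f(\cdot,x))$ under $\flat$, and commuting $\anni$ with $I_{m-1}$, are only literally meaningful for sufficiently regular $f$. I would therefore establish the identities first on symmetrised tensor products, where they coincide with the already proved (\ref{diese1})--(\ref{260}), and then extend to all of $\mathbf{d}_{m,sym}$ by density, invoking the continuity of each arrow in the lent-particle decomposition of Theorem \ref{T4}. The role of $\varphi$ rather than $\psi$ is precisely what makes the limiting kernels well defined: by Remark \ref{R6} the product-type integrand belongs to a single $\mathbb{P}_N$-class, so $\crea$ and $\anni$ act on it without ambiguity, while the diagonal contributions that would otherwise be ill-defined are suppressed by the indicator.
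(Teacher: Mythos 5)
Your proposal is correct and follows essentially the same route as the paper: part a) is obtained as a direct application of the lent particle formula, with the same key observation that applying $\anni$ forces the integrand to be written with the indicator $\1_{\{x_i\neq x\}}$ (i.e.\ $\psi$ replaced by $\varphi$), and part b) rests on the same two facts the paper uses, namely that $\flat$ takes values in $L^2_0(\rho)$ (so $N\odot\rho$ may be replaced by $\widetilde{N\odot\rho}$, killing the compensator term) and that the identity with $J_m(f^{\flat_m})$ is checked on tensor products and extended by polarization and density. Your explicit ``peeling'' computation is just a more detailed writing-out of the tensor-product verification the paper leaves to the reader, so the two proofs coincide in substance.
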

\begin{proof} Let be $f\in\mathbf{ d}_{m,sym}$. Let us apply the lent particle formula to $I_m(f)$. We have 
$$\crea I_m(f)=I_m(f)+mI_{m-1}f\qquad\mathbb{P}\times\nu\mbox{-a.e.}$$
and since $I_m(f)$ does not depend on $x$ $$(\crea I_m(f))^\flat=m(I_{m-1}f)^\flat\qquad\mathbb{P}\times\nu\times\rho\mbox{-a.e.}$$
Now, applying the operator $\anni$ amounts to take the preceding relation with $\omega$ changed into $\anni\omega$ and to work under the measure $\mathbb{P}_N$ instead of $\mathbb{P}\times\nu$. That means  that a functional $F(\tilde{N}(u),x)$ is changed into
$$\anni_x(F(\tilde{N}(u),x))=F(\int u(y)1_{\{y\neq x\}}\tilde{N}(dy),x)\quad \mathbb{P}_N\mbox{-a.e.}$$ Taking $m=2$ for instance, we see that $\anni(I_1f)$ must be written $\mathbb{P}_N\mbox{-a.e.}$ 
$\int f(y,x)1_{\{y\neq x\}}\tilde{N}(dy)$ instead of $(I_1f)(x)-f(x,x)$. Thus the part a) of the statement is a direct application of the lent particle formula.

b) Since $\flat$ takes its values in $L_0^2(R,\mathcal{R},\rho)$, it is equivalent to use the compensated random measure $\widetilde{N\odot\rho}$ instead of $N\odot\rho$ in (\ref{diese5}).

Now $m\int I_{m-1}(\varphi)d\widetilde{N\odot\rho}=J_m(f^{\flat_m})$ as seen by beginning with $f=u^{\otimes m}$, then polarizing to $f$ symmetrized of $u_1\otimes\cdots\otimes u_m$ and then to general $f\in\mathbf{ d}_{n,sym}$ by density.
\end{proof}
Let us remark that formula (\ref{diese5})  allows a new simple proof of the orthogonality of the chaos in $\mathbb{D}$. Let $f$ be as in the proposition. We have
$$
\begin{array}{rl}
2\mathcal{E}[I_mf]=\mathbb{E}\Gamma[I_mf]&=\mathbb{E}m^2\int\gamma[I_{m-1}(f1_{\{x_i\neq x\forall i\}})]\;N(dx)\\
&= m^2\int\anni\gamma[I_{m-1}f]\;dNd\mathbb{P}\\
&=m^2\int\gamma[I_{m-1}f]d\mathbb{P}d\nu\qquad\qquad\mbox{(by Lemma \ref{lem8})}\\
&=m^2(m-1)!\int\gamma[f]d\nu^{\times(m-1)}d\nu\\
&=m!2e_m[f].
\end{array}
$$ and similarly with the scalar products. Now (\ref{gradient-chaos}) yields an even shorter proof using the orthogonality of the chaos generated by $J_n$ under $\mathbb{P}\times\hat{\mathbb{P}}$, since $\langle f^{\flat_m},g^{\flat_m}\rangle_{L^2(\nu\times\rho)^m}=2e_m[f,g]$.\\

\noindent Contrarily to the Wiener case the random variables $I_m(f)$ are not regular in general. Their distributions may contain Dirac masses. Even in the first chaos the $\sharp$ or the $\Gamma$ applied to $I_1u=\tilde{N}u$ yields a non deterministic result, and the sharp operator does not diminish the order of the chaos. Studying regularity of multiple integrals needs therefore additional hypotheses.

\subsection{Density for $(I_1(g),\ldots,I_n(g^{\otimes n}))$.}
Relation (\ref{diese1}) yields immediately
\begin{equation}\label{Gam}\frac{1}{i!}\frac{1}{j!}\Gamma[I_i,I_j]=\int\left(\sum_{k=1}^i\frac{I_{i-k}}{(i-k)!}(-1)^kg^{k-1}\right)\left(\sum_{\ell=1}^j\frac{I_{j-\ell}}{(j-\ell)!}(-1)^\ell g^{\ell-1}\right)\gamma[g]\,dN.\end{equation}
Let us denote $\mathcal{I}$ the column vector of $(I_1,\ldots,I_n)$, we have
$$\Gamma[\mathcal{I},\mathcal{I}^t]=\int V V^t\gamma[g]\,dN$$
with $V$ the column vector of
$(-1,-I_1+g,\ldots, n!\sum_{k=1}^n\frac{I_{n-1}}{(n-k)!}(-1)^kg^{k-1})$.

\noindent Let us precise now some hypotheses. We suppose $\nu\{\gamma[g]>0\}=+\infty$ and that assumptions are fulfilled such that we have (BC) on the bottom space and (EID) on the upper
space, as usual.

If for some $\omega\in\Omega$ the matrix $\Gamma[\mathcal{I},\mathcal{I}^t]$ is singular, this means that all the vectors 
$$V(\omega,X_i(\omega))\quad\mbox{ for } X_i\in\mbox{supp}(\omega)\cap\{\gamma[g]>0\}$$
belong to the same hyperplan of $\mathbb{R}^n$, in other words, this implies that there exist $\lambda_0(\omega),\ldots,\lambda_{n-1}(\omega)$ not all null such that:
$$-\lambda_0(\omega)+\lambda_1(\omega)(-I_1+g)+\cdots+\lambda_{n-1}(\omega)n!\sum_{k=1}^n\frac{I_{n-1}}{(n-k)!}(-1)^kg^{k-1}=0$$
on all the points of supp$( \omega)\cap\{\gamma[g]>0\}.$

Since $g\in\bbd$, by (EID) on the bottom space --- which is always true for scalar functions --- the measure $g_*[1_{\{\gamma[g]>0\}}.\nu]$ is absolutely continuous hence continuous (diffuse). As $\nu\{\gamma[g]>0\}=+\infty$ the random Poisson measure image by $g$ of the points of $N$ which are in $\{\gamma[g]>0\}$ do possess infinitely many distinct points. Hence the $g(X_i(\omega))$ cannot annul a polynomial except if it is identically sero.

The question is therefore to know whether 
$$-\lambda_0(\omega)+\lambda_1(\omega)(-I_1+x)+\cdots+\lambda_{n-1}(\omega)n!\sum_{k=1}^n\frac{I_{n-1}}{(n-k)!}(-1)^kx^{k-1}\equiv 0$$
implies $\lambda_0(\omega)=\cdots=\lambda_{n-1}(\omega)=0$.

But this is due to the fact that the annulation of the coefficients of this polynomial builds a triangular linear system whose diagonal terms are $-\lambda_0(\omega),\ldots,n!(-1)^{n}\lambda_{n-1}(\omega)$. We have proved
\begin{Pro}{\it If the upper structure satisfies {\rm(EID)}, for $g\in  L^\infty\cap\bbd$ such that \break $\nu\{\gamma[g]>0\}=+\infty$ the vector
$(I_1(g),\ldots,I_n(g^{\otimes n}))$
has a density on $\mathbb{R}^n$.}\end{Pro}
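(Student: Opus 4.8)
The plan is to invoke the energy image density property (EID) on the upper space. Since each $I_k(g^{\otimes k})$ lies in $\mathbb{D}$ by Proposition \ref{chaos}, (EID) applied to the map $\mathcal{I}=(I_1(g),\ldots,I_n(g^{\otimes n}))$ guarantees that $\mathcal{I}_*[(\det\Gamma[\mathcal{I},\mathcal{I}^t])\cdot\mathbb{P}]\ll\lambda^n$, so the law of $\mathcal{I}$ is absolutely continuous on the event $\{\det\Gamma[\mathcal{I},\mathcal{I}^t]>0\}$. It therefore suffices to show that $\det\Gamma[\mathcal{I},\mathcal{I}^t]>0$ $\;\mathbb{P}$-a.s. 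First I would compute the carr\'e du champ matrix by specializing formula (\ref{Gam}) (itself a consequence of the gradient expression (\ref{diese1})), obtaining the representation
$$\Gamma[\mathcal{I},\mathcal{I}^t]=\int V V^t\,\gamma[g]\,dN,$$
where $V=V(\omega,x)$ is the explicit column vector whose $k$-th entry is a polynomial in the value $g(x)$ of degree $k-1$, with coefficients built from the lower order integrals $I_0,\ldots,I_{k-1}$. Since $\gamma[g]\geq 0$, this is a sum of nonnegative rank-one contributions over the atoms $X_i$ of $N$ lying in $\{\gamma[g]>0\}$.

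Next I would translate singularity of this Gram-type matrix into a geometric statement: $\det\Gamma[\mathcal{I},\mathcal{I}^t](\omega)=0$ holds exactly when all the vectors $V(\omega,X_i(\omega))$, for $X_i\in\mathrm{supp}(\omega)\cap\{\gamma[g]>0\}$, lie in a common hyperplane of $\mathbb{R}^n$. Equivalently, there are $\lambda_0(\omega),\ldots,\lambda_{n-1}(\omega)$, not all zero, with $\sum_k\lambda_k(\omega)\,V_k(\omega,X_i(\omega))=0$ for every such atom. Because each $V_k$ is polynomial of degree $k-1$ in $g(X_i)$, this is a polynomial identity of degree at most $n-1$ that holds at all the sample points $g(X_i(\omega))$.

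The heart of the argument, and the step I expect to be the main obstacle, is to show that these points $g(X_i(\omega))$ are infinitely many \emph{distinct} reals, so that a polynomial of degree $\leq n-1$ vanishing at all of them is forced to be identically zero. For this I would apply (EID) in dimension one on the bottom structure, which always holds (cf.\ \cite{bouleau1}), to the scalar function $g$: the image measure $g_*[\mathbf{1}_{\{\gamma[g]>0\}}\cdot\nu]$ is then absolutely continuous with respect to Lebesgue measure, hence diffuse, and by the hypothesis $\nu\{\gamma[g]>0\}=+\infty$ it is infinite. Consequently the corresponding image Poisson measure has $\mathbb{P}$-a.s.\ infinitely many distinct atoms, which are precisely the distinct values among the $g(X_i(\omega))$.

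Finally I would close the argument by inspecting the coefficients of the polynomial $x\mapsto\sum_k\lambda_k(\omega)V_k(\omega,x)$. Its vanishing as a polynomial yields a homogeneous linear system in $\lambda_0(\omega),\ldots,\lambda_{n-1}(\omega)$ which is triangular, the diagonal entries being (up to sign and factorials) $-\lambda_0(\omega),\ldots,n!(-1)^n\lambda_{n-1}(\omega)$; hence all $\lambda_k(\omega)=0$, contradicting non-triviality. Therefore $\det\Gamma[\mathcal{I},\mathcal{I}^t]>0$ $\;\mathbb{P}$-a.s., and (EID) on the upper space delivers the density of $(I_1(g),\ldots,I_n(g^{\otimes n}))$ on $\mathbb{R}^n$.
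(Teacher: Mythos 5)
Your proposal is correct and follows essentially the same route as the paper: the Gram-type representation $\Gamma[\mathcal{I},\mathcal{I}^t]=\int VV^t\gamma[g]\,dN$ from (\ref{Gam}), reduction of singularity to a common-hyperplane (polynomial vanishing) condition at the atoms, the scalar bottom-space (EID) to guarantee infinitely many distinct values $g(X_i(\omega))$, and the triangular system on the coefficients forcing all $\lambda_k(\omega)=0$. The paper's own argument is the same in every essential step, so there is nothing to add.
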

\begin{Rq}{\rm This result is quite different from what happens on the Wiener space since there the law of $(I_1(f),\ldots,I_n(f^{\otimes n}))$ is carried by the algebraic curve of equation
$$
\left\{
\begin{array}{c}
x_2=2!H_2(\|f\|^2,x_1)\\
\vdots\\
x_n=n!H_n(\|f\|^2,x_1)
\end{array}
\right.
$$
where $H_n(\lambda,x)$ is the Hermite polynomial given by 
$$\exp(tx-\frac{t^2\lambda}{2})=\sum_{n=0}^\infty t^nH_n(\lambda,x).$$ 
}\end{Rq}
\subsection{Density for $(I_{n_1}(f_1^{\otimes n_1}),\ldots,I_{n_p}(f_p^{\otimes n_p}))$.}
Let $f=(f_1,\ldots,f_p)\in(L^1\cap L^\infty\cap\bbd)^p$ and let be $\mathcal{J}$ the column vector $(I_{n_1}(f_1^{\otimes n_1}),\ldots,I_{n_p}(f_p^{\otimes n_p}))$ where we suppose $n_i\geq 1\;\forall i.$

Defining the polynomials $\mathcal{P}_{i}$ by $\mathcal{P}_{i}(x)=i!\left(\sum_{k=1}^i\frac{I_{i-k}}{(i-k)!}(-1)^kx^{k-1}\right)$ we have by (\ref{Gam}) the equality between $p\times p$-matrices
$$\Gamma[\mathcal{J},\mathcal{J}^t]=
\int\left(\mathcal{P}_{n_i}(f_i)\mathcal{P}_{n_j}(f_j)\gamma[f_i,f_j]\right)_{ij}\;dN.$$
By Lemma \ref{determin}
$$\{\det \Gamma[\mathcal{J},\mathcal{J}^t]=0\}\subset\{\int\det\gamma[f,f^t]
\mathcal{P}_{n_1}(f_1)^2\cdots\mathcal{P}_{n_p}(f_p)^2\;dN=0\}.$$
Let us assume $\nu\{\det\gamma[f,f^t]>0\}=+\infty$, and that we have (EID) below and above. The image by $f$ of $1_{\{\det[f,f^t]>0\}}\cdot\nu$ is absolutely continuous w.r. to Lebesgue measure and the Poisson random measure image of $N\mid_{\{\det[f,f^t]>0\}}$ has an absolutely continuous and infinite  intensity measure, it possesses necessarily points outside the finite union (less than $\sum_{i=1}^p(n_i-1)$) of hyperplans defined by 
$\mathcal{P}_{n_i}(x_i)=0$ whose term of highest degree is $(n_i)!(-1)^{n_i}x_i^{n_i-1}$. We obtain
\begin{Pro}{\it If {\rm(EID)} holds below and above, and if $\gamma[f,f^t]$ is invertible $\nu$-a.e. \break $(I_{n_1}(f_1^{\otimes n_1}),\ldots,I_{n_p}(f_p^{\otimes n_p}))$ has a density as soon as 
$n_i\geq 1\;\forall i.$
}\end{Pro}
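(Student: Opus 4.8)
The plan is to reduce the existence of a density for $\mathcal{J}=(I_{n_1}(f_1^{\otimes n_1}),\dots,I_{n_p}(f_p^{\otimes n_p}))$ to the almost sure invertibility of its carr\'e-du-champ matrix, and then to invoke {\rm(EID)} on the upper space. From (\ref{Gam}) I would first write the matrix as
\[
\Gamma[\mathcal{J},\mathcal{J}^t]=\int D\,\gamma[f,f^t]\,D\;dN,\qquad D=\mathrm{diag}\big(\mathcal{P}_{n_1}(f_1),\dots,\mathcal{P}_{n_p}(f_p)\big),
\]
each integrand being a symmetric nonnegative matrix. Lemma \ref{determin} then gives
\[
\{\det\Gamma[\mathcal{J},\mathcal{J}^t]=0\}\subset\Big\{\int (\det D)^2\det\gamma[f,f^t]\;dN=0\Big\},
\]
and since $(\det D)^2\det\gamma[f,f^t]=\big(\prod_j\mathcal{P}_{n_j}(f_j)^2\big)\det\gamma[f,f^t]\geq0$, the right-hand event forces this density to vanish $N$-a.e. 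Thus on $\{\det\Gamma=0\}$ every atom $x$ of $N$ with $\det\gamma[f,f^t](x)>0$ satisfies $\prod_j\mathcal{P}_{n_j}(f_j(x))=0$, i.e. $f(x)$ lies in $S(\omega)=\{y\in\mathbb{R}^p:\prod_j\mathcal{P}_{n_j}(y_j)=0\}$.

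I would next describe $S(\omega)$ and transport {\rm(EID)} from the bottom. Since the leading coefficient of $\mathcal{P}_{n_j}$ is the deterministic nonzero scalar $n_j!(-1)^{n_j}$, this polynomial has degree exactly $n_j-1$ for every $\omega$, hence at most $n_j-1$ real roots; therefore $S(\omega)$ is always a union of at most $K=\sum_j(n_j-1)$ affine hyperplanes of the special form $\{y_j=r\}$. On the bottom side, {\rm(EID)} yields $f_*[(\det\gamma[f,f^t])\cdot\nu]\ll\lambda^p$, and a Radon--Nikodym argument upgrades this to $m:=f_*[1_{\{\det\gamma[f,f^t]>0\}}\cdot\nu]\ll\lambda^p$; moreover $m(\mathbb{R}^p)=\nu\{\det\gamma[f,f^t]>0\}=+\infty$. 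Writing $\tilde\nu=1_{\{\det\gamma[f,f^t]>0\}}\cdot\nu$, which is $\sigma$-finite of infinite total mass, the image $\mu=f_*\big(N\!\mid_{\{\det\gamma[f,f^t]>0\}}\big)$ is the point process whose atoms are the $f(x)$, $x$ an atom of $N$ in $\{\det\gamma[f,f^t]>0\}$, with expected measure $m$.

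The heart of the argument, and the step I expect to be delicate, is to show that the event just isolated is $\mathbb{P}$-negligible: the difficulty is that the hyperplanes composing $S(\omega)$ are themselves random, their positions being roots of the random polynomials $\mathcal{P}_{n_j}$, and hence a priori correlated with the atoms of $\mu$. I would decouple them by a first-moment (Mecke) computation carried out at the bottom level, where $\sigma$-finiteness is available. First, $m\ll\lambda^p$ forces each one-dimensional marginal $m_j=(f_j)_*\tilde\nu$ to satisfy $m_j\ll\lambda^1$, hence to be atomless: $m_j(\{c\})=\tilde\nu\{f_j=c\}=0$ for every $c\in\mathbb{R}$. Consequently, for each fixed $j$ the expected number of ordered pairs of distinct atoms sharing their $j$-th coordinate is, by Mecke's formula,
\[
\int\!\!\int 1_{\{f_j(x)=f_j(x')\}}\,\tilde\nu(dx)\,\tilde\nu(dx')=\int \tilde\nu\{f_j=f_j(x)\}\,\tilde\nu(dx)=0,
\]
so that almost surely the atoms of $\mu$ have pairwise distinct $j$-th coordinates, for all $j=1,\dots,p$ at once. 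Then each hyperplane $\{y_j=r\}$ contains at most one atom of $\mu$, whence the union $S(\omega)$ of at most $K$ of them contains at most $K$ atoms; since $m(\mathbb{R}^p)=+\infty$ gives infinitely many atoms almost surely, requiring all atoms to lie in $S(\omega)$ is impossible. Hence $\mathbb{P}\{\det\Gamma[\mathcal{J},\mathcal{J}^t]=0\}=0$.

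It then remains to invoke {\rm(EID)} on the upper structure applied to the $\mathbb{R}^p$-valued functional $\mathcal{J}$, which gives $\mathcal{J}_*[(\det\Gamma[\mathcal{J},\mathcal{J}^t])\cdot\mathbb{P}]\ll\lambda^p$; combined with $\det\Gamma[\mathcal{J},\mathcal{J}^t]>0$ $\mathbb{P}$-a.s. this yields $\mathcal{J}_*\mathbb{P}\ll\lambda^p$, i.e. $\mathcal{J}$ has a density on $\mathbb{R}^p$. The hypothesis $n_i\geq1$ merely guarantees that each component is a genuine multiple integral in $\mathbb{D}$ (so that $\Gamma[\mathcal{J},\mathcal{J}^t]$ and the $\mathcal{P}_{n_j}$ make sense); a component with $n_j=1$ is harmless, since then $\mathcal{P}_{n_j}\equiv-1$ contributes no root and no hyperplane.
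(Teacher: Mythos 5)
Your proof is correct and follows essentially the same route as the paper: formula (\ref{Gam}) plus Lemma \ref{determin} to reduce the problem to the event that all image atoms of $N\mid_{\{\det\gamma[f,f^t]>0\}}$ lie on a union of at most $\sum_{j}(n_j-1)$ coordinate hyperplanes, then (EID) below to get an absolutely continuous infinite image intensity, and (EID) above to conclude. Your Mecke (second factorial moment) computation showing that almost surely no two atoms share a $j$-th image coordinate is a welcome sharpening rather than a different approach: it makes rigorous the step the paper only asserts, namely that the image Poisson measure necessarily has points outside the finitely many hyperplanes even though their positions are random and correlated with the configuration.
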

\begin{Rq}{\rm Let us compare with the situation on the Wiener space. We dispose only of sufficient conditions of regularity, but we can nevertheless compare the thread of the arguments.

We have $DI_n(g^{\otimes n})=nI_{n-1}(g^{\otimes(n-1)})g$ and 
$$\Gamma[I_{n_i}(f_i^{\otimes n_i}),I_{n_j}(f_{n_j}^{\otimes n_j})]=
n_in_jI_{n_i-1}(f_i^{\otimes (n_i-1)})I_{n_j-1}(f_{n_j-1}^{\otimes (n_j-1)})\int f_if_j\,dt.$$
Since (EID) holds on the Wiener space a sufficient condition of density of $\mathcal{J}$ is that almost surely the vector
$$\left(n_1I_{n_1-1}(f_1^{\otimes (n_1-1)})f_1(t),\ldots, n_pI_{n_p-1}(f_p^{\otimes (n_p-1)})f_p(t)\right)$$
generates a $p$-dimensional space when $t$ varies. It is easily seen by induction on $n$ that\break $\forall f\in L^2(dt), \|f\|\neq0: \mathbb{P}\{I_n(f^{\otimes n})=0\}=0$. It follows that on the Wiener space, $\mathcal{J}$ has a density as soon as $n_i\geq 1\;\forall i$ and $(f_1,\ldots,f_p)$ are linearily independent in $L^2(dt)$.
}\end{Rq}
\subsection{Other functionals of Poisson integrals.}
\subsubsection{Density of $(N(f_1(g),\ldots,N(f_n(g)))$.}
Let $g\in L^\infty\cap\bbd$ and let $f_i$ be regular real functions on $\mathbb{R}$. Let us denote $\mathcal{K}=(N(f_1(g),\ldots,N(f_n(g)))^t$ and suppose  $\nu\{\gamma[g]>0\}=+\infty$. From $\Gamma[N(f_i(g)),N(f_j(g))]=\int f^\prime_i(g)f^\prime_j(g)\gamma[g]dN$ we obtain that the matrix $\Gamma[\mathcal{K},\mathcal{K}^t]$ is singular if the vectors $(f^\prime_1(g),\ldots,f^\prime_n(g))$ taken on the points of $\omega$ are in a same hyperplan. Now the points $g(x),\;x\in\mbox{supp}(\omega),$ have an accumulation point at zero. We obtain
\begin{Pro}{\it Suppose {\rm(EID)} holds above, $g\in L^\infty\cap\bbd$,  $\nu\{\gamma[g]>0\}=+\infty$, and the functions $f_i$ be analytic at the neighborhood of O such that $(1,f_1,\ldots,f_n)$ be linearily independent, then $(N(f_1(g)),\ldots,N(f_n(g)))$ has a density.}\end{Pro}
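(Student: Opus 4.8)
The plan is to invoke the (EID) criterion recalled above: since (EID) holds on the upper space, it suffices to show that $\det\Gamma[\mathcal{K},\mathcal{K}^t]>0$ $\mathbb{P}$-a.s. With the matrix formula derived above,
$$\Gamma[\mathcal{K},\mathcal{K}^t]=\int V(x)V(x)^t\,\gamma[g](x)\,N(dx),\qquad V(x)=(f_1'(g(x)),\ldots,f_n'(g(x)))^t,$$
this is a $\mathbb{P}$-a.s. convergent sum of positive semidefinite rank-one matrices over the atoms $x$ of $N$, hence singular at a given $\omega$ precisely when there is a vector $\lambda=(\lambda_1,\ldots,\lambda_n)\neq0$ with $\sum_{i=1}^n\lambda_i f_i'(g(x))=0$ at every atom $x$ of $\omega$ lying in $\{\gamma[g]>0\}$. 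I would therefore fix $\omega$ and show that off a $\mathbb{P}$-negligible set no such $\lambda$ can exist.

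First I would record why the values $g(x)$ at the relevant atoms accumulate at $0$, as asserted above. Because $g\in\bbd\subset L^2(\nu)$ while $\nu\{\gamma[g]>0\}=+\infty$, Chebyshev's inequality gives $\nu(\{\gamma[g]>0\}\cap\{|g|\geq\varepsilon\})\leq\varepsilon^{-2}\|g\|_{L^2(\nu)}^2<+\infty$, so $\nu(\{\gamma[g]>0\}\cap\{|g|<\varepsilon\})=+\infty$ for every $\varepsilon>0$; thus $N$ has infinitely many atoms in each such set and the corresponding values $g(x)$ cluster at $0$. Since (EID) for the scalar map $g$ makes $g_*[\mathbf{1}_{\{\gamma[g]>0\}}\cdot\nu]$ diffuse (as used in the preceding subsection), these values are $\mathbb{P}$-a.s. pairwise distinct, so $\mathbb{P}$-a.s. we obtain a sequence of distinct points $g(x_k)\to0$ on which the vanishing condition would hold.

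The analyticity hypothesis then closes the argument. On the full-measure event just described, suppose some $\lambda\neq0$ realizes singularity and set $h=\sum_{i=1}^n\lambda_i f_i'$. Each $f_i$, hence $f_i'$, is analytic near $0$, so $h$ is analytic on a connected neighborhood of $0$ and vanishes on the sequence $g(x_k)\to0$ of distinct interior points; by the identity theorem $h\equiv0$ near $0$. Integrating, $\sum_{i=1}^n\lambda_i f_i$ is constant, say $=c$, near $0$, that is $(-c)\cdot1+\sum_{i=1}^n\lambda_i f_i\equiv0$ with $(-c,\lambda_1,\ldots,\lambda_n)\neq0$, contradicting the linear independence of $(1,f_1,\ldots,f_n)$. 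Hence $\Gamma[\mathcal{K},\mathcal{K}^t]$ is $\mathbb{P}$-a.s. invertible and (EID) yields the density of $\mathcal{K}$.

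I expect the delicate point to be the localization at $0$: one must combine $\nu\{\gamma[g]>0\}=+\infty$ with $g\in L^2(\nu)$ to force infinitely many atoms into every neighborhood of $0$, since analyticity is assumed only there and an accumulation point elsewhere would be useless; distinctness of the values, via bottom (EID), is likewise needed so that the identity theorem applies. The well-definedness of $N(f_i(g))$ and its membership in $\mathbb{D}$ — which one may secure by normalizing $f_i(0)=0$ and using the functional calculus with $g\in L^\infty\cap\bbd$ — is routine.
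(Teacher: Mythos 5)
Your proof is correct and follows essentially the same route as the paper: the matrix formula $\Gamma[\mathcal{K},\mathcal{K}^t]=\int VV^t\,\gamma[g]\,dN$, the hyperplane characterization of singularity on the atoms in $\{\gamma[g]>0\}$, accumulation of the values $g(x)$ at $0$, and the identity theorem contradicting the linear independence of $(1,f_1,\ldots,f_n)$. The paper merely asserts the accumulation-at-zero and the existence of infinitely many distinct values; your Chebyshev argument (using $g\in\bbd\subset L^2(\nu)$) and the appeal to scalar bottom (EID) for pairwise distinctness are precisely the details it leaves implicit.
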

Since there are infinitely many distinct points $g(x),\;x\in\mbox{supp}(\omega),$ we see also that without analyticity hypothesis it suffices that any hyperplan cuts the curve $(f_1^\prime(t),\ldots,f_m^\prime(t))_{t\in\mathbb{R}}$ at a finite number of points, the $f_i$ being supposed $\mathcal{C}^1\cap Lip$.
\subsubsection{Density of $(\sum_jN(f_j),\ldots,\sum_j(N(f_j))^n).$} Let us consider $\Phi$ the column vector of the polynomials \break$\Phi_k(x_1,\ldots,x_n)=\sum_{j=1}^nx_j^k$, $f=(f_1,\ldots,f_n)\in\bbd^n$ and let us pose $V$ the column vector of the $\Phi_k(N(f_1),\ldots,N(f_n))$.
We obtain 
$$
\begin{array}{rcl}
\Gamma[V,V^t]&=&\nabla\Phi(Nf_1,\ldots,Nf_n)\Gamma[Nf,Nf^t](\nabla\Phi)^t(Nf_1,\ldots,Nf_n)\\
&=& \nabla\Phi(Nf_1,\ldots,Nf_n)\int\gamma[f,f^t]dN(\nabla\Phi)^t(Nf_1,\ldots,Nf_n)\\
\det\Gamma[V,V^t]&=&(\det \nabla\Phi(Nf_1,\ldots,Nf_n))^2\det\int\gamma[f,f^t]dN
\end{array}
$$
where $\nabla\Phi$ is the Jacobian matrix of $\Phi$. 

$\det \nabla\Phi$ is a Vandermonde determinant, if $\nu\{f_i\neq f_j\}=+\infty$, $\det \nabla\Phi(Nf_1,\ldots,Nf_n)$ cannot vanish by Paul L\'evy's theorem.

$\int\gamma[f,f^t]dN$ is an infinite sum of non negative symmetric matrices, as before we can state
\begin{Pro} {\it Supposing {\rm(EID)} above, $\nu\{f_i\neq f_j\}=+\infty\;\forall i\neq j$, and $\nu\{\det\gamma[f,f^t]>0\}=+\infty$, then $V$ has a density.}\end{Pro}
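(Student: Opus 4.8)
The plan is to deduce the density of $V$ from the factorisation
$$\det\Gamma[V,V^t]=\bigl(\det\nabla\Phi(Nf_1,\ldots,Nf_n)\bigr)^2\,\det\int\gamma[f,f^t]\,dN$$
established just above, by showing that each of the two factors is $\neq0$ $\mathbb{P}$-a.s. Once I have $\det\Gamma[V,V^t]>0$ $\mathbb{P}$-a.s., (EID) on the upper space will give $V_*\bigl[(\det\Gamma[V,V^t])\cdot\mathbb{P}\bigr]\ll\lambda^n$; and since the weight is $\mathbb{P}$-a.s. finite and strictly positive, the measures $\mathbb{P}$ and $(\det\Gamma[V,V^t])\cdot\mathbb{P}$ are equivalent, so that $V_*\mathbb{P}\ll\lambda^n$ and $V$ has a density. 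The whole argument thus reduces to the strict positivity of the two factors.

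For the Vandermonde factor I would note that the Jacobian entries are $\partial_j\Phi_k=k\,x_j^{k-1}$, whence $\det\nabla\Phi(x_1,\ldots,x_n)=\bigl(\prod_{k=1}^n k\bigr)\prod_{i<j}(x_j-x_i)$; evaluated at $x_j=N(f_j)$ this vanishes exactly when $N(f_i)=N(f_j)$ for some $i\neq j$. Since $N(f_i)-N(f_j)=N(f_i-f_j)$ and, by hypothesis, $\nu\{f_i\neq f_j\}=\nu\{f_i-f_j\neq0\}=+\infty$, Paul L\'evy's theorem (Lemma \ref{lemme de levy}) makes the law of $N(f_i-f_j)$ continuous, so $\mathbb{P}\{N(f_i-f_j)=0\}=0$; a union over the finitely many pairs $i<j$ then gives $\det\nabla\Phi(Nf_1,\ldots,Nf_n)\neq0$ $\mathbb{P}$-a.s. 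The integrability $\int(|f_i-f_j|\wedge1)\,d\nu<\infty$ needed in Lemma \ref{lemme de levy} is inherited from the integrability of the $f_j$ already required for $N(f_j)$ to be defined, via $|a-b|\wedge1\leq(|a|\wedge1)+(|b|\wedge1)$.

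For the second factor I would observe that $\int\gamma[f,f^t]\,dN=\sum_{x\in\mbox{supp}(\omega)}\gamma[f,f^t](x)$ is a sum of positive semidefinite matrices, so that a single positive definite summand suffices. As $\nu\{\det\gamma[f,f^t]>0\}=+\infty$, the variable $N\{\det\gamma[f,f^t]>0\}$ is Poisson of infinite mean and hence equals $+\infty$ $\mathbb{P}$-a.s.; at any point $x$ of $\mbox{supp}(\omega)$ in $\{\det\gamma[f,f^t]>0\}$ the matrix $\gamma[f,f^t](x)$ is positive definite, so $c^t\bigl(\int\gamma[f,f^t]\,dN\bigr)c>0$ for every $c\neq0$ and $\det\int\gamma[f,f^t]\,dN>0$ $\mathbb{P}$-a.s. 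Alternatively Lemma \ref{determin} yields $\{\det\int\gamma[f,f^t]\,dN=0\}\subset\{\int\det\gamma[f,f^t]\,dN=0\}$, the latter event being negligible precisely because $N$ charges $\{\det\gamma[f,f^t]>0\}$ a.s. Combining the two factors gives $\det\Gamma[V,V^t]>0$ $\mathbb{P}$-a.s., and (EID) concludes.

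The positivity of both determinant factors is, as the above shows, essentially automatic once one knows that $N$ places infinitely many points in $\{\det\gamma[f,f^t]>0\}$ and separates the values $N(f_i)$. I expect the main obstacle to lie not here but in the preliminary bookkeeping: checking that the components $\Phi_k(Nf_1,\ldots,Nf_n)$ genuinely belong to $\mathbb{D}$ --- which, since the $\Phi_k$ are polynomial and not globally Lipschitz, would be handled through $\mathbb{D}_{loc}$ and a truncation on the sets $\{|N(f_j)|\leq R\}$ so that the computation of $\Gamma[V,V^t]$ and the invocation of (EID) are legitimate --- together with the verification of the integrability hypothesis of Lemma \ref{lemme de levy}.
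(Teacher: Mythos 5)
Your proof is correct and follows essentially the same route as the paper: the factorisation $\det\Gamma[V,V^t]=(\det\nabla\Phi(Nf_1,\ldots,Nf_n))^2\det\int\gamma[f,f^t]\,dN$, Paul L\'evy's theorem (Lemma \ref{lemme de levy}) to kill the Vandermonde factor, the infinite-intensity argument (equivalently Lemma \ref{determin}) for the Gram factor, and (EID) above to conclude. You in fact spell out details the paper compresses into ``as before we can state'' --- notably the localisation via $\mathbb{D}_{loc}$ needed because the $\Phi_k$ are only locally Lipschitz, and the integrability hypothesis of Lemma \ref{lemme de levy} --- which is a welcome tightening rather than a departure.
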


\subsection{Density of $I_n(f)$ for $f\in\mathbf{ d}_{n,sym}$.}
If $f(x_1,\cdots,x_n)$ is a symmetric element of $\mathbf{ d}_n$, the function $(f(x_1,\cdots,x_{n-1},\cdot))^\flat(x,r)$ may be seen as a symmetric Hilbert valued function in $\mathbf{ d}_{n-1}(H)$ with $H=L^2(\nu\times\rho)$. So that we can iterate the operator $\flat$ going down on the arguments $(f(x_1,\cdots,x_{n-2},\cdot,\cdot))^{\flat\flat}\in\mathbf{ d}_{n-2}(H\otimes H)$.

Let us apply this with Prop \ref{pro-diese}:
$$\Gamma[I_n(f)]=n^2\int\anni\gamma[I_{n-1}(f)]dN.$$ By Lemma \ref{lem8} for $\Gamma[I_n(f)]$ to be $>0$ it suffices
$$\gamma[I_{n-1}(f)]>0\quad \mathbb{P}\times\nu\mbox{-a.e.}$$
i.e. that $I_{n-1}f^\flat$ be $\neq 0$ $\mathbb{P}\times\nu\times\rho$-a.e. hence it suffices that $\nu\times\rho$-a.e. $I_{n-1}f^\flat$ have a continuous law.

Now getting down the induction and using Paul L\'evy's theorem yields that it suffices that
$$(\nu\times\rho)^{n-1}\mbox{-a.e.}\quad\nu\{x_1:f^{(n-1)\flat}(x_1,x_2,r_2,\ldots,x_n,r_n)\neq 0\}=+\infty.$$
Applying this to the classical case where the bottom space is $\mathbb{R}_+$ equipped with the Lebesgue measure and the form $e[f]=\frac{1}{2}\int f^{\prime 2}(t)dt$, where we can choose $f^\flat=f^\prime\cdot\xi$ with $\xi$ reduced Gaussian, we obtain
\begin{Pro}{\it For $n\geq 2$, $I_n(f)$ has a density if the Lebesgue measure of the set $\{x_1:\frac{\partial^{n-1}\tilde{f}}{\partial x_2\cdots\partial x_n}\neq 0\}$ is infinite $dx_2\cdots dx_n$-a.e.}\end{Pro}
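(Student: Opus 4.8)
The plan is to specialize to the structure on $\mathbb{R}_+$ the general sufficient condition obtained just above the statement, so that almost all of the work is already done. Recall that by Proposition \ref{pro-diese} one has $\Gamma[I_n(f)]=n^2\int\anni\gamma[I_{n-1}(f)]\,dN$; since $I_n(f)$ is real-valued and (EID) holds automatically in dimension one for any local structure with carr\'e du champ (cf.\ \cite{bouleau1}), it suffices to show $\Gamma[I_n(f)]>0$ $\mathbb{P}$-a.s. By Lemma \ref{lem8} this positivity is implied by $\gamma[I_{n-1}(f)]>0$ $\mathbb{P}\times\nu$-a.e.\ (the integrand being then $\mathbb{P}_N$-a.e.\ positive and $N$ carrying infinitely many points because $\nu$ is infinite), which amounts to $I_{n-1}(f^\flat)\neq 0$ $\mathbb{P}\times\nu\times\rho$-a.e.; getting down the induction and invoking Paul L\'evy's theorem (Lemma \ref{lemme de levy}), this reduces, exactly as displayed before the statement, to
$$(\nu\times\rho)^{n-1}\text{-a.e.}\quad\nu\{x_1:\,f^{(n-1)\flat}(x_1,x_2,r_2,\ldots,x_n,r_n)\neq 0\}=+\infty.$$
Hence the only genuine task is to render this condition explicit for the choice $f^\flat=f'\cdot\xi$.

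First I would compute the iterated gradient. Each application of $\flat$ differentiates with respect to the current last argument and inserts a fresh factor $\xi$, the iteration being licit because, as observed just before the statement, $(f(x_1,\ldots,x_{n-1},\cdot))^\flat$ is an $H$-valued element of $\mathbf{d}_{n-1}(H)$ with $H=L^2(\nu\times\rho)$, so $\flat$ may be reapplied. After $n-1$ steps this yields
$$f^{(n-1)\flat}(x_1,x_2,r_2,\ldots,x_n,r_n)=\frac{\partial^{n-1}\tilde f}{\partial x_2\cdots\partial x_n}(x_1,\ldots,x_n)\prod_{i=2}^{n}\xi(r_i),$$
where $\tilde f$ is the symmetrization of $f$; we may work with $\tilde f$ throughout since $I_n(f)=I_n(\tilde f)$.

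Next I would discard the Gaussian factors. As $\xi$ is a reduced Gaussian its zero set is $\rho$-negligible, so $\prod_{i=2}^{n}\xi(r_i)\neq 0$ for $\rho^{n-1}$-a.e.\ $(r_2,\ldots,r_n)$, and multiplication by this nonzero scalar leaves the zero set in $x_1$ unchanged. Therefore, for $(\nu\times\rho)^{n-1}$-a.e.\ data,
$$\{x_1:\,f^{(n-1)\flat}\neq 0\}=\Big\{x_1:\,\tfrac{\partial^{n-1}\tilde f}{\partial x_2\cdots\partial x_n}(x_1,\ldots,x_n)\neq 0\Big\},$$
and since $\nu$ is Lebesgue measure the displayed requirement $\nu\{\cdot\}=+\infty$ becomes exactly the hypothesis that this set have infinite Lebesgue measure for $dx_2\cdots dx_n$-a.e.\ $(x_2,\ldots,x_n)$. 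Feeding this back into the general criterion gives $\Gamma[I_n(f)]>0$ $\mathbb{P}$-a.s., hence the density.

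The two measure-theoretic reductions (the $\anni/dN$ integration via Lemma \ref{lem8}, and the removal of the $\rho$-negligible zero set of $\xi$) are routine. I expect the only delicate point to be the very first step, namely justifying rigorously that the $(n-1)$-fold iterate of $\flat$ equals the displayed mixed partial derivative times $\prod_{i}\xi(r_i)$: one must check that $\tilde f\in\mathbf{d}_{n,sym}$ admits the successive gradients needed and that the derivation property of $\flat$ propagates correctly through the multiple integral $I_{n-1}$ at each descent of the induction. Once that identity is secured, the conclusion follows at once from Paul L\'evy's theorem applied to the first-chaos variable $\tilde N\big(f^{(n-1)\flat}(\cdot,x_2,r_2,\ldots,x_n,r_n)\big)$, whose law is continuous precisely when the relevant $\nu$-measure is infinite.
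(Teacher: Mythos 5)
Your proposal is correct and follows essentially the same route as the paper: the paper's own proof consists precisely of specializing the general sufficient condition $(\nu\times\rho)^{n-1}$-a.e. $\nu\{x_1: f^{(n-1)\flat}\neq 0\}=+\infty$ (obtained via Proposition \ref{pro-diese}, Lemma \ref{lem8}, the descent by induction and Paul L\'evy's theorem) to the classical structure on $\mathbb{R}_+$ with $f^\flat=f'\cdot\xi$, $\xi$ reduced Gaussian. Your explicit computation of the iterated gradient $f^{(n-1)\flat}=\frac{\partial^{n-1}\tilde f}{\partial x_2\cdots\partial x_n}\prod_{i=2}^n\xi(r_i)$ and the removal of the $\rho$-negligible zero set of $\xi$ are exactly the (unstated) details behind the paper's one-line conclusion.
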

This extends to  the classical case on $\mathbb{R}^d$ taking $f^\flat=\frac{\partial f}{\partial x_1}\xi_1+\cdots+\frac{\partial f}{\partial x_n}\xi_n$ with the $\xi_i$ i.i.d. reduced Gaussian. 

\begin{Rq}{\rm
There is a major difference with the case of the Brownian motion about the sum of the series
$$\sum_{n=0}^\infty\frac{t^n}{n!}I_n(f^{\otimes n}).$$
In the case of Wiener space this sum is a function of $\int fdB=I_1(f)$ since it is equal to $e^{t\int fdB-\frac{1}{2}t^2\|f\|^2}$. On the Poisson space it is not a function of $I_1(f)=N(f)$ but of $N(\log(1+tf))$ 
and for $f\in L^\infty\cap\bbd$ and small $t$ by our usual argument using Paul L\'evy's theorem
the pair \break$(Nf,N\log(1+tf))$ do have a density if $\nu\{\gamma[f]>0\}=+\infty$.

It is natural to ask about the density of the vector $(N\log(1+t_1f),\ldots,N\log(1+t_nf))$.
For $f\in L^\infty\cap\bbd$, supposing $0<t_1,\ldots,t_n<\|f\|_\infty$, by the method it suffices to have (BC) down, (EID) above, $\nu\{\gamma[f]>0\}=+\infty$ and the $t_i$ to be distinct.
}\end{Rq}
\begin{Rq} {\rm In the Wiener case multiple integrals obey a product formula (cf. Shigekawa \cite{shigekawa} p276) allowing to express explicitely $I_m[f]I_n[g]$ as linear combination of multiple integrals of order less or equal to $m+n$. 

A similar formula exists on the Poisson space slightly more complicated. It may be obtained in the following way. Let $u,v\in L^2\cap L^\infty(\nu)$ with small uniform norm. By the relation
$$e^{N(\log(1+su))-s\nu(u)}e^{N(\log(1+tv))-t\nu(v)}=
e^{N(\log(1+su+tv+stuv))-\nu((su+tv+stuv)}e^{st\nu(uv)}$$
thanks to (\ref{252}) we have
$$(1+\sum_{m=1}^\infty \frac{s^m}{m!}I_m(u^{\otimes m})(1+\sum_{n=1}^\infty \frac{t^n}{n!}I_n(v^{\otimes n})=
(1+\sum_{p=1}^\infty \frac{1}{p!}I_p((su+tv+stuv)^{\otimes p})e^{st\nu(uv)}$$
and the product formula is obtained by identification of the term in $s^mt^n$ of the two sides. Then it may be extended by polarization to $\tilde{f}$ and $\tilde{g}$ for $f=f_1\otimes \cdots\otimes f_m$ and  $g=g_1\otimes \cdots\otimes g_n$ and then for general $f\in L^2(\nu^{\times m})$, $g\in L^2(\nu^{\times n})$ by density. See \cite{kabanov}, \cite{privault2}, \cite{tudor} for different forms of such a formula, also \cite{russo-vallois}, \cite{rota-wallstrom}, and \cite{dellacherie} p261 for a general expression and proof.

If we apply this product formula to $J_m(f^{\flat_m})J_n(g^{\flat_n})$ using $\hat{\mathbb{E}}J_k(h)=I_k(\int h\rho(dr_1)\cdots\rho(dr_k))$ for $h(x_1,r_1,\cdots,x_k,r_k)\in L^2_{sym}(\nu\times\rho)^{\times k})$ we could obtain another expression of $\Gamma[I_mf,I_ng]=\hat{\mathbb{E}}J_m(f^{\flat_m})J_n(g^{\flat_n})$ to be compared with (\ref{Gamma5}).
}\end{Rq}

\vspace{-3mm}

\end{document}